\documentclass{article}
\usepackage[utf8]{inputenc}
\usepackage[T1]{fontenc}
\usepackage[margin=1.2in]{geometry}
\usepackage{lmodern}
\usepackage{float,subcaption,comment}
\usepackage{mathtools}
\usepackage{amsthm}
\usepackage{todonotes}
\usepackage{mathrsfs,amssymb} 
\usepackage{graphicx,tikz,pgf,tkz-graph}
\usetikzlibrary{arrows,shapes}
\usetikzlibrary{decorations.pathreplacing}
\usepackage[shortlabels]{enumitem}

\newtheorem{theorem}{Theorem}

\newtheorem{remark}[theorem]{Remark}
\newtheorem{claim}[theorem]{Claim}

\newtheorem{proposition}[theorem]{Proposition}
\newtheorem{lemma}[theorem]{Lemma}

\newtheorem{conjecture}[theorem]{Conjecture}

\newtheorem{problem}[theorem]{Problem}

\newtheorem{question}[theorem]{Question}
\theoremstyle{definition}
\newtheorem*{definition*}{Definition}

\newtheorem*{hallstheorem1}{Hall's marriage theorem}

\newcommand*{\myproofname}{Proof}
\newenvironment{claimproof}[1][\myproofname]{\begin{proof}[#1]}{\end{proof}}

\DeclareMathOperator{\s}{s}

\let\le\leqslant
\let\ge\geqslant
\let\leq\leqslant

\let\geq\geqslant

\usepackage{bookmark}

\newcommand{\abs}[1]{\left|#1\right|}

\renewcommand*{\phi}{\varphi}
\renewcommand*{\emptyset}{\varnothing}

\renewcommand{\Pr}{\mathop\mathbb{P}\nolimits}
\newcommand*{\EE}{\mathop\mathbb{E}\nolimits}

\newcommand*{\sH}{\mathscr{H}}

\renewcommand{\a}{\alpha}
\newcommand{\binsq}{\mathbin\square}

\newcounter{row}
\newcounter{col}

\title{
List packing number of bounded degree graphs
}

\author{
	Stijn Cambie%
 \thanks{Extremal Combinatorics and Probability Group (ECOPRO), Institute for Basic Science (IBS), Daejeon, South Korea.
 Supported by IBS-R029-C4. Email: \protect\href{mailto:stijn.cambie@hotmail.com}{\protect\nolinkurl{stijn.cambie@hotmail.com}}.}	
	\and
	Wouter Cames van Batenburg%
	\thanks{Delft Institute of Applied Mathematics, Delft University of Technology, Netherlands.
		Email: \protect\href{mailto:w.p.s.camesvanbatenburg@tudelft.nl}{\protect\nolinkurl{w.p.s.camesvanbatenburg@tudelft.nl}}.}
	\and
	Ewan Davies%
	\thanks{Department of Computer Science, Colorado State University, Fort Collins, USA.
		Email: \protect\href{mailto:research@ewandavies.org}{\protect\nolinkurl{research@ewandavies.org}}.}
	\and
	Ross J. Kang%
	\thanks{Korteweg--de Vries Institute for Mathematics, University of Amsterdam, Netherlands.
		 Supported by a Vidi grant (639.032.614) of the Netherlands Organisation for Scientific Research (NWO). Email: \protect\href{mailto:r.kang@uva.nl}{\protect\nolinkurl{r.kang@uva.nl}}.}
}

\date{\today}

\begin{document}

\maketitle

\begin{abstract}
We investigate the list packing number of a graph, the least $k$ such that there are always $k$ disjoint proper list-colourings whenever we have lists all of size $k$ associated to the vertices. 
We are curious how the behaviour of the list packing number contrasts with that of the list chromatic number, particularly in the context of bounded degree graphs.
The main question we pursue is whether every graph with maximum degree $\Delta$ has list packing number at most $\Delta+1$.
Our results highlight the subtleties of list packing and the barriers to, for example, pursuing a Brooks'-type theorem for the list packing number.
\end{abstract}

\section{Introduction}

 List colouring is a well-known generalisation of graph colouring in which we wish to find a proper colouring of a graph, but an adversary supplies a list of colours for each vertex and we must choose a colour for each vertex from its list.
 The importance of this notion is its flexible role in inductive approaches, such as, for example, iterative random colouring procedures~\cite{MolloyReedbook}.
 Although there is already a rich collection of prominent challenges in list colouring (which relate well to algebraic, probabilistic, extremal, structural topics in graph theory), here we have set ourselves an even more difficult task of juggling multiple list-colourings simultaneously. 
 We initiated the study of this topic in~\cite{CCDK21}.
 
	Formally, a list-assignment $L$ of a graph $G$ is a function $L:V(G)\to 2^{\mathbb{N}}$, and an $L$-colouring of $G$ is a colouring $c \colon V(G)\to \mathbb{N}$ such that for every vertex $v$, $c(v)\in L(v)$. 
	As is standard, the $L$-colourings we consider in this work are usually assumed to be proper, that is, $c(v)\ne c(v')$ for any edge $vv'$.
	We call a list-assignment with $|L(v)|=k$ for all $v$ a $k$-list-assignment.
	Recall that the list chromatic number of $G$, denoted $\chi_\ell(G)$, is the smallest $k$ such that for every $k$-list-assignment $L$, $G$ admits an $L$-colouring. 
	Our work is motivated by the idea to look for multiple, disjoint list-colourings. 
	In particular, given a $k$-list-assignment $L$ of $G$ we call a collection of $k$ pairwise-disjoint $L$-colourings an $L$-packing of size $k$, or less specifically a list-packing.
	The \emph{list packing number} $\chi^\star_\ell(G)$ of $G$ is the least $k$ such that $G$ admits an $L$-packing of size $k$ for any $k$-list-assignment $L$ of $G$.
 Clearly, $\chi^\star_\ell(G)\ge \chi_\ell(G)$ always, and note how this implies the existence of a list-packing for any $k$-list-assignment $L$ where $k\ge\chi^\star_\ell(G)$ (by iteratively extracting $L$-colourings).

 The definitions above can easily be extended to a more general setup known as correspondence colouring. We defer the details of this more technical concept, as here it suffices to understand that replacing the notion of list colouring with correspondence colouring in the definitions above yields \emph{correspondence packing} and the \emph{correspondence packing number} we denote $\chi_c^\star(G)$. It holds that $\chi^\star_c(G) \ge \chi^\star_\ell(G)$ always.

The list packing number is in a natural progression from the list chromatic number, in a similar way to how the chromatic number relates to the independence number; we remark more on this in Subsection~\ref{sub:defns}.
 Indeed, list and correspondence packing were anticipated in earlier works---see, e.g.~\cite{AFH96,Cat80,KuOs09,Mac21,Yus21}---and it is thus surprising that our earlier work~\cite{CCDK21} was the first to systematically explore the notion, pointing to its potential throughout the landscape of (chromatic) graph theory.

 In our previous paper~\cite{CCDK21}, we pursued amongst other things packing versions of list and correspondence chromatic number bounds in terms of the number of vertices or degeneracy of the underlying graph.
 
 Considering the often deep relationship between the chromatic number and the independence number, the question directly comes to mind: truly how much ``worse'' is list packing in comparison to list colouring? In one interpretation of this question, what we audaciously named the \emph{List Packing Conjecture} in~\cite{CCDK21}, we proposed the list packing number might always be within some fixed constant factor of the list chromatic number. This mystery continues to fascinate us. 

 Although there are many tempting directions, more of which we mention further on, here we have restricted our attention to two of the most basic settings. We characterised graphs of list or correspondence packing number $2$. And we have made progress for bounded degree graphs, especially for those of small degree. 
 In this study, an approximate form of the packing numbers has naturally arisen.
 These settings have helped us uncover some interesting differences between finding a single list-colouring and a full packing of them, and have served as proving ground for more general intuition.

 The early, essential work of Erd\H{o}s, Rubin and Taylor on list colouring~\cite{ERT80} gives compass for results we may pursue for list and correspondence packing. In particular, Erd\H{o}s {\em et al.}~characterised the graphs of list chromatic number $2$ using the so-called {\em theta graphs}. Here we obtain a characterisation for packing, and find that the corresponding graph class is, naturally, smaller and simpler.
\begin{theorem}\label{thm:2packable}
 A graph $G$ has list packing number 2 if and only if it is a forest with at least one edge. The same holds for correspondence packing number.
 \end{theorem}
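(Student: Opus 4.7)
The plan is to treat the two directions separately after reducing the theorem via the observations $\chi_\ell^\star(G) = 1$ for edgeless $G$ and $\chi_\ell^\star(G) \geq \chi_\ell(G) \geq 2$ otherwise. This reduces the claim to: (i) every forest admits a 2-packing for every 2-list (or 2-correspondence) assignment, and (ii) every graph containing a cycle has $\chi_\ell^\star > 2$ (which implies $\chi_c^\star > 2$ since $\chi_c^\star \geq \chi_\ell^\star$).

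For (i), I would use induction on $|V(G)|$, removing a leaf at each step. The useful structural observation is that since $|L(v)| = 2$, the second colouring $c_2$ of a 2-packing is forced to be the pointwise complement of $c_1$ within each list; hence a 2-packing is nothing but a proper $L$-colouring $c_1$ whose pointwise complement is also proper. Given a packing $(c_1, c_2)$ of $G - v$ by induction, the extension to a leaf $v$ with neighbour $u$ requires $c_1(v) \in L(v) \setminus \{c_1(u)\}$ such that $L(v) \setminus \{c_1(v)\} \neq c_2(u)$. A short case analysis on $|L(v) \cap \{c_1(u), c_2(u)\}|$ shows that at least one of the two choices of $c_1(v)$ always works; the only delicate case is $L(v) = \{c_1(u), c_2(u)\}$, resolved by $c_1(v) := c_2(u)$, which gives $c_2(v) = c_1(u) \neq c_2(u)$. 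The correspondence variant follows identically, replacing the intersection analysis by case analysis on the matching $M_{uv}$ (of size $0$, $1$, or $2$, with the ``$|A|=|B|=1$'' subcase forcing $A=B$).

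For (ii), if $G$ has an odd cycle then the constant assignment $L \equiv \{1,2\}$ forces $\chi_\ell(G) \geq 3$ and hence $\chi_\ell^\star(G) \geq 3$. The main work is the bipartite case. For an even cycle $C = v_1 v_2 \cdots v_{2k} v_1$ with $k \geq 2$, I propose the twisted list assignment $L(v_i) = \{1, 2\}$ for $1 \leq i \leq 2k-2$, $L(v_{2k-1}) = \{1, 3\}$, and $L(v_{2k}) = \{2, 3\}$, with every vertex outside $C$ given a pair of fresh colours mutually disjoint from all other lists. The long chain $L(v_1) = \cdots = L(v_{2k-2}) = \{1, 2\}$ forces any proper $L$-colouring of $C$ to alternate, and a quick enumeration shows that the three resulting proper $L$-colourings of $C$ each have a non-proper pointwise complement. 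Extending by the outside vertices adds no constraint, since their lists are fully disjoint, so the obstruction on $C$ prevents any 2-packing of $G$.

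The main obstacle is engineering the twist in (ii). Conceptually, the assignment is designed so that the conflict graph $H^+$---recording both the $L$-colouring constraints and the different-colours-per-vertex constraints of a 2-packing---contains an odd cycle, explicitly of length $2k + 3$, obstructing bipartiteness. One can phrase this in the language of signed graphs: each non-free edge of $G$ receives a $\pm$ sign depending on whether the induced matching on lists is ``parallel'' or ``crossed'' relative to an arbitrary labelling $L(v) = \{a_v, b_v\}$, and a 2-packing exists iff the signed graph is balanced; the proposed twist produces an odd number of crossings around $C$, unbalancing it. Either this conceptual framework or the direct enumeration above should suffice to carry out the argument.
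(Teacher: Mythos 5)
Your proposal is correct and follows essentially the same route as the paper: the paper's upper bound for forests is obtained by citing the degeneracy bound $\chi_c^\star(G)\le 2\delta^\star(G)$, whose proof for $1$-degenerate graphs is exactly your leaf-removal induction, and your twisted list-assignment on even cycles ($\{1,2\},\dots,\{1,2\},\{1,3\},\{2,3\}$) is identical to the one used in the paper's Theorem~\ref{thm:chil*_Delta2}, which the paper invokes for the converse. The only (correct) addition is that you make explicit, via fresh disjoint lists outside the cycle, why the obstruction on a cycle propagates to any graph containing one, a monotonicity step the paper leaves implicit.
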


\noindent
Based on this, the next graphs to consider are of course cycles. 
We denote by $C_n$ a cycle on $n$ vertices.
Here we find that there is no dependence on parity of $n$, unlike for the respective colouring definitions.
	
\begin{theorem}\label{thm:cycles}
$\chi_\ell^\star(C_n)=3$ and $\chi_c^\star(C_n) = 4$.
\end{theorem}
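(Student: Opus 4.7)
The plan is to handle the list and correspondence bounds separately, using Theorem~\ref{thm:2packable} for one side of the lower bound. Write $C_n = v_1 v_2 \cdots v_n v_1$.

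Since $C_n$ is not a forest, Theorem~\ref{thm:2packable} immediately gives $\chi_\ell^\star(C_n), \chi_c^\star(C_n) \ge 3$. To strengthen the correspondence side to $\chi_c^\star(C_n) \ge 4$, I would exhibit a bad $3$-correspondence cover: put $L(v) = [3]$ at every vertex, use the identity matching on every edge of $C_n$ except one, and on that edge use the transposition matching $\{(1,2),(2,1),(3,3)\}$. Viewing any hypothetical $3$-packing as bijections $\tau_j \in S_3$ (so that $\tau_j(r)$ is the colour of $v_j$ in row $r$), the correspondence constraint on an edge with matching $\sigma$ becomes that $\tau_{j+1}^{-1} \sigma \tau_j$ is a derangement of $[3]$---hence a $3$-cycle in $A_3$. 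Multiplying cyclically yields $\tau_1^{-1} (\sigma_n \cdots \sigma_1) \tau_1 \in A_3$, forcing $\sigma_n \cdots \sigma_1 \in A_3$. For our cover this product equals a single transposition (odd), a parity contradiction, so no $3$-packing exists.

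For $\chi_c^\star(C_n) \le 4$, I would exploit that $C_n$ is $2$-degenerate. Order the vertices so each has at most two earlier neighbours, and extend a partial packing one vertex at a time by choosing a bijection $\pi_j \colon L(v_j) \to [4]$ compatible with the bijections at the earlier neighbours. Each earlier neighbour contributes a partial matching of forbidden $(\text{colour},\text{row})$ pairs in $L(v_j) \times [4]$, so the total forbidden bipartite graph is a union of at most two matchings and has maximum degree at most $2$ on each side. Removing it from $K_{4,4}$ leaves a bipartite graph of minimum degree at least $2$; a short Hall's-theorem check (using that the forbidden edges form a union of two matchings to rule out the only potentially problematic configurations at $|S|\in\{2,3,4\}$) yields a perfect matching and hence a valid $\pi_j$.

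The main technical step is the upper bound $\chi_\ell^\star(C_n) \le 3$. I would reformulate a $3$-packing as a choice of bijections $\pi_j \colon L_j \to [3]$ with $\pi_j(c) \ne \pi_{j+1}(c)$ for every $c \in L_j \cap L_{j+1}$, and then proceed by the analogous greedy approach: pick $\pi_1$ freely, extend compatibly to $\pi_2, \ldots, \pi_{n-1}$ (at least two compatible choices at each step), and finally select $\pi_n$ compatible with both $\pi_{n-1}$ and $\pi_1$. In principle Hall's condition at the final step can fail, but the parity phenomenon behind the correspondence lower bound now works in our favour: in the constant-list case $L_j=L$, each compatibility step multiplies $\pi_j$ by a $3$-cycle (even), so $\pi_{n-1} \pi_1^{-1} \in A_L$; a Hall violation at $v_n$ for some pair $\{a,b\} \subseteq L(v_n)$ would force the restriction of $\pi_{n-1} \pi_1^{-1}$ to $\{\pi_1(a),\pi_1(b)\}$ to be a swap, which pins $\pi_{n-1} \pi_1^{-1}$ to a transposition on $L$---contradicting its membership in $A_L$. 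For mixed lists the forbidden graph at $v_n$ is strictly sparser, and the argument extends by tracking the restrictions of the $\pi_j$ to a candidate Hall-violating pair along the subpath on which the lists contain that pair. The main obstacle is making this parity/restriction argument tight in the mixed-list setting, where intermediate lists may omit a Hall-violating colour and the global parity of $\pi_{n-1} \pi_1^{-1}$ is no longer cleanly defined; a careful case analysis on the overlaps $|L_{n-1} \cap L_n|$ and $|L_1 \cap L_n|$ completes the argument.
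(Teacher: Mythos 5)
Two parts of your argument have real problems. First, deducing $\chi_\ell^\star(C_n)\ge 3$ from Theorem~\ref{thm:2packable} is circular in the context of this paper: the ``only if'' direction of that theorem (not a forest $\Rightarrow$ packing number $>2$) is itself proved by invoking Theorem~\ref{thm:cycles}. For odd $n$ the bound follows from $\chi(C_n)=3$, but for even $n$ you need an explicit bad $2$-list-assignment; the paper uses $L_i=\{1,2\}$ for $i\le n-2$, $L_{n-1}=\{1,3\}$, $L_n=\{2,3\}$ and checks that no proper $L$-colouring gives $v_1$ colour $2$. This is easy to supply, but it is missing from your write-up.

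Second, and more seriously, the heart of the theorem --- the upper bound $\chi_\ell^\star(C_n)\le 3$ for non-constant list-assignments --- is not actually proved. Your parity argument for constant lists is correct (a Hall violation at the closing vertex forces $\pi_{n-1}\pi_1^{-1}$ to be a transposition, while the $n-2$ derangement steps force it into $A_3$), but, as you yourself note, it breaks down when lists vary: a colour can drop out of the lists along the path and re-enter, so there is no well-defined permutation whose parity you can track, and ``a careful case analysis on the overlaps completes the argument'' is an assertion rather than a proof. Note that a Hall violation at $v_n$ can still occur with mixed lists (e.g.\ $L_1=\{1,2,4\}$, $L_{n-1}=L_n=\{1,2,3\}$ with violating pair $\{1,2\}$), so the mixed case does not simply become easier; one must actively use the slack in the earlier greedy choices to avoid it. The paper's route sidesteps this: it places the seam at an edge $uv$ with $L(u)\ne L(v)$ (so $|L(u)\cap L(v)|\le 2$), packs the path $C_n\setminus\{u,v\}$, and extends to $u$ and $v$ simultaneously, using that each of $u$ and $v$ admits at least two valid extensions and only one choice of $\vec c(v)$ blocks the extension to $u$. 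Your remaining pieces are fine: the $3$-fold correspondence-cover with a single twisted matching and the even/odd derangement parity argument is exactly the paper's proof of $\chi_c^\star(C_n)\ge 4$, and your Hall-based greedy for $\chi_c^\star(C_n)\le 4$ is a correct (if unnecessary) re-derivation of the degeneracy bound $\chi_c^\star\le 2\delta^\star$ from Theorem~\ref{thm:degen}.
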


These results give basis for the more difficult problem of list and correspondence packing in graphs of bounded maximum degree.
Before discussing our results in this setting, we present a conjecture which should serve as a focal point for future study.
These are prospective upper bounds for the list and correspondence packing numbers in terms of the maximum degree $\Delta(G)$ of a graph $G$.

	\begin{conjecture}\label{conj:chil*Delta}\label{conj:chic*Delta}
		For any graph $G$,\hfill
 \begin{enumerate}[(i)]
 \item\label{itm:chil*Delta}$\chi_{\ell}^\star(G) \le \Delta(G) +1$; and
 \item\label{itm:chic*Delta}$\chi_{c}^\star(G) \le 2 \left\lceil \frac{\Delta(G)+1}{2} \right \rceil $.
	\end{enumerate}
 \end{conjecture}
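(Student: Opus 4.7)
The plan is to split by the value of $\Delta := \Delta(G)$. For $\Delta \le 1$, both parts reduce to Theorem~\ref{thm:2packable}, and for $\Delta = 2$ to Theorem~\ref{thm:cycles}; the substantive range is therefore $\Delta \ge 3$. In this range I would use a probabilistic approach at large $\Delta$ and a structural/kernel-method approach at small $\Delta$, handling the two parts of the conjecture by related but distinct arguments.

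For part~\ref{itm:chil*Delta} at large $\Delta$, the first attempt is to sample independently, for each vertex $v$, a uniform random bijection $\pi_v \colon \{1,\dots,\Delta+1\} \to L(v)$, and to show that with positive probability $\pi_u(k) \ne \pi_v(k)$ for every edge $uv$ and every index $k$. Each bad event has probability at most $1/(\Delta+1)$ while the dependency degree is $\Theta(\Delta^2)$, so the symmetric Lov\'asz Local Lemma misses by a factor of $\Theta(\Delta)$. My plan is therefore to replace the symmetric LLL with an entropy compression (Moser--Tardos) argument in which a conflict only triggers a resampling of the two permutations involved, or with a semi-random ``nibble'' fixing $\Theta(\log \Delta)$ colourings per round while controlling residual list sizes. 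A useful preprocessing step is to reduce to the case $L(u)=L(v)$ for every edge $uv$, since shrinking the intersection can only decrease every bad event's probability.

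Part~\ref{itm:chic*Delta} is where the ceiling in $2\lceil (\Delta+1)/2 \rceil$ must be accounted for. Its form suggests a \emph{paired packing}: for each $j$, construct two correspondence-colourings $c_{2j-1}, c_{2j}$ simultaneously so that $\{c_{2j-1}(v), c_{2j}(v)\}$ partitions $L(v)$ into prescribed pairs. This reduces the task to a Hall-type matching question on the correspondence cover, where I would try to verify the Hall condition using the structural symmetry of correspondence covers; this viewpoint also naturally explains the parity phenomenon seen in Theorem~\ref{thm:cycles}.

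The principal obstacle, as I see it, is the intermediate regime---especially $\Delta = 3$---where the probabilistic bounds are too lossy but the conjecture is still highly nontrivial. Here I would iterate the kernel method: extract an $L$-colouring from a kernel-perfect orientation of $G$, delete the used colours from the lists, and recurse on the residual instance. The danger is that after a few rounds the residual instance develops Brooks-type rigidity ($K_{\Delta+1}$-subgraphs, or odd cycles whose reduced lists coincide), producing conflicts the next colouring cannot avoid. Identifying these rigid substructures, showing they arise only in a controlled way, and then finishing off the packing on them by hand, in the spirit of Brooks' theorem, is where I expect most of the technical effort to go.
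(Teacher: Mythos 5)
The statement you are trying to prove is presented in the paper as a \emph{conjecture}, and it is open: the paper establishes part~(i) only for $\Delta(G)\le 3$ and part~(ii) only for $\Delta(G)\le 4$ (Theorem~\ref{thm:smalldegrees}, assembled from Theorems~\ref{thm:chil*_Delta2}--\ref{thm:chic*_Delta3} and~\ref{thm:maxdegree_improvement}), together with the much weaker general bound $\chi_c^\star(G)\le 2\Delta(G)-2$ and a fractional relaxation (Theorem~\ref{thm:fractionalgreedy}). Your proposal is a research programme rather than a proof: every branch terminates in an acknowledged gap, so there is nothing here that settles the conjecture, and you should not present it as a proof of the statement.

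Beyond that, several of the proposed ingredients face concrete obstructions. The probabilistic branch admits that the symmetric Local Lemma misses by a factor $\Theta(\Delta)$, but neither entropy compression nor a nibble is known to recover a factor of $\Delta$ in this kind of problem; even in the most symmetric instance, $G=K_n$ with a correspondence cover, the best known bound (Yuster, cited in the paper) is about $1.78n$, far from $n+1$, so the claim that these tools would reach $\Delta+1$ is unsupported. The ``preprocessing step'' reducing to $L(u)=L(v)$ on every edge is only a worst-case bound for the union-bound analysis, not a reduction of the packing problem itself: the paper's obstructions (the even-cycle $2$-list-assignment, the $K_4^-$-necklace, the $K_n\binsq K_n$ example) all rely essentially on unequal lists. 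Finally, the iterative ``extract a colouring, delete used colours, recurse'' strategy for small $\Delta$ is precisely the naive approach the paper warns against: after one extraction the residual lists have size $\Delta$, equal to the degree, and Proposition~\ref{prop:Dinitzpacking} shows that an abundance of proper $L$-colourings does not imply even a fractional packing, so disjoint colourings cannot be peeled off greedily. For comparison, the paper's own $\Delta=3$ argument works by extending a partial packing of $G$ minus a carefully chosen edge not in a triangle, with the finitely many extension cases checked by computer --- a local-surgery argument rather than an iterated global one.
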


\noindent
 Either bound would be sharp for every choice of $\Delta(G)\ge 2$ by considering the complete graph. 
 For this, note that $\chi_{c}^\star(K_n) \ge \chi_{\ell}^\star(K_n) =n = \Delta(K_n)+1$ (the first equality was proved in~\cite{CCDK21}) and
 a construction of Catlin~\cite{Cat80} (see e.g.~\cite{Yus21}) shows that $\chi^\star_c(K_n) = n+1=\Delta(K_n)+2$ if $n\ge3$ is odd.
 In~\cite{CCDK21}, we showed bounds within about a factor $2$ of the conjectured bounds (though we slightly improve on that earlier result below). In the special case of $K_n$, Yuster~\cite{Yus21} proved a bound within a factor $1.78$ for all sufficiently large $n$. We note that Conjecture~\ref{conj:chic*Delta} is a broad generalisation of Conjecture 1.1 in~\cite{Yus21}, which in turn is related to the so-called ``modified Fischer's conjecture'' (see~\cite{KuOs09}).
 
 As evidence towards our conjecture, we have completely resolved a few of the first cases.
 \begin{theorem}\label{thm:smalldegrees}\hfill
 \begin{enumerate}[(i)]
 \item\label{itm:smalldegreesl}
 Conjecture~\ref{conj:chil*Delta}\ref{itm:chil*Delta} holds for $\Delta(G) = 2$ and $\Delta(G)=3$.
 \item\label{itm:smalldegreesc}
 Conjecture~\ref{conj:chic*Delta}\ref{itm:chic*Delta} holds for $\Delta(G) = 2$, $\Delta(G)=3$, and $\Delta(G)=4$.
 \end{enumerate}
 \end{theorem}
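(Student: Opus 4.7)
\emph{Proof plan.} I proceed by induction on $|V(G)|$ and treat parts \ref{itm:smalldegreesl} and \ref{itm:smalldegreesc} in parallel, writing $k$ for the conjectured bound ($k=\Delta+1$ in the list case, $k=2\lceil(\Delta+1)/2\rceil$ in the correspondence case). The case $\Delta(G)\le 2$ is immediate: such a $G$ is a disjoint union of isolated vertices, paths, and cycles; Theorem~\ref{thm:2packable} handles forests and Theorem~\ref{thm:cycles} handles cycles, and a $k$-packing of each component combines into one on $G$. So we may assume that $G$ is connected with $\Delta(G)\in\{3,4\}$.

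Suppose $G$ has a vertex $v$ of degree $d<\Delta$. Taking a $k$-packing $c_1,\ldots,c_k$ of $G-v$ by induction, I extend it to $v$ using Hall's marriage theorem. The relevant bipartite extension graph joins each coloring index $i\in[k]$ to those elements of $L(v)$ that are not forbidden from $v$ under $c_i$. Each $c_i$ forbids at most $d$ elements, and the key observation is that, because the packing is disjoint at every neighbor of $v$ (and the correspondence matchings on incident edges are bijections), each single element of $L(v)$ is forbidden in at most $d$ of the $k$ colorings. Consequently, for a subset $S\subseteq[k]$ the common forbidden set $\bigcap_{i\in S}F_i$ is empty whenever $|S|>d$, and has size at most $d$ otherwise; since $d<\Delta\le k-1$, a short check gives Hall's condition in all cases, so the packing extends.

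This reduces matters to the $\Delta$-regular case. For $\Delta=3$ with $G=K_4$, a list packing of size $4$ follows from the equality $\chi_\ell^\star(K_n)=n$ recalled in the introduction (and can be written down explicitly). For connected cubic $G$ other than $K_4$, and for $4$-regular $G$ in the correspondence setting, my plan is to delete a suitably chosen short substructure (a pair of adjacent vertices, or the endpoints of a cleverly chosen edge) and invoke the inductive hypothesis on the resulting graph, then re-insert the deleted vertices by a refined simultaneous bipartite matching on pairs of colorings. To obtain enough slack, I would lean on edge-decompositions: Vizing's theorem for $\Delta=3$ together with a careful treatment of class-$2$ cubic graphs, and for $4$-regular graphs a $2$-factorisation that exploits the evenness of $k$ in the correspondence case to pair colorings and reduce to a more tractable extension problem.

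The main obstacle will be exactly this regular case, because the naive Hall extension can fail: the forbidden sets $F_i$ may cover $L(v)$, for instance when the inherited packing of $G-v$ permutes colours cyclically around $v$. Overcoming this requires modifying the inductive packing itself, and the delicate point is to carry out such a modification while preserving the packing property at \emph{all} vertices simultaneously, a strictly stronger constraint than in standard Brooks-type arguments for a single colouring.
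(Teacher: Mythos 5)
Your reduction to the $\Delta$-regular case is sound: the Hall-type extension at a vertex $v$ of degree $d<\Delta$ works exactly as you describe (each element of $L(v)$ lies in at most $d$ of the forbidden sets $F_i$, so $\bigcap_{i\in S}F_i=\emptyset$ once $|S|>d$, and $k\ge 2d$ closes the remaining subsets), and this is essentially the degeneracy bound $\chi_c^\star\le 2\delta^\star$ already available as Theorem~\ref{thm:degen}. The cases $\Delta\le 2$ are likewise correctly dispatched. But the entire content of the theorem lives in the regular case, and there your proposal stops at a plan. You acknowledge that the naive Hall extension can fail for a $\Delta$-regular graph, and you propose to fix this via Vizing's theorem, a treatment of class-$2$ cubic graphs, and $2$-factorisations of $4$-regular graphs, but no argument is given for why these edge-decompositions produce a packing; indeed the difficulty you yourself flag --- modifying the inherited packing while preserving disjointness at \emph{every} vertex simultaneously --- is not addressed by these tools. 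As written, part~\ref{itm:smalldegreesl} for $\Delta=3$ and part~\ref{itm:smalldegreesc} for $\Delta\in\{3,4\}$ are not proved. A further concrete omission: for the correspondence statement at $\Delta=3$ you must show $\chi_c^\star(K_4)\le 4$, and the equality $\chi_\ell^\star(K_n)=n$ you invoke only covers the list version; the correspondence fact requires a separate verification.

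For comparison, the paper closes the regular case by two quite different devices. For cubic $G\ne K_4$ it selects an edge $uv$ contained in no triangle (Claim~\ref{clm:edgenotinK3}), packs $G\setminus\{u,v,u_2,v_1,v_2\}$, and extends to the five remaining vertices through an explicit, computer-assisted classification of the $112$ non-extendable triples $(\vec c(u_2),\vec c(v_1),\vec c(v_2))$, with $K_4$ itself checked by brute force. For $\Delta=4$ (via the general bound $\chi_c^\star\le 2\Delta-2$ for $\Delta\ge 4$) it deletes both endpoints of an edge and characterises the near-failures of Hall's condition in the auxiliary bipartite graphs (Lemmas~\ref{lem:checkingHallCondition_graphversion_1} and~\ref{lem:checkingHallCondition_graphversion_2}, Claim~\ref{clm:AvoidBadHall}), imposing two extra constraints on the choice of $\vec c(u)$ so that the subsequent extension to $v$ survives. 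To complete your route you would need an argument of comparable strength for these regular cases; nothing in the proposal currently substitutes for it.
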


 \noindent
 We remark that this last case implies that $\chi_c^\star(K_5)=6$, which is a small step towards Conjecture~1.1 in~\cite{Yus21}.
 It also implies some natural subcases for the so-called {\em Strong Colouring Conjecture} (see~\cite{ABZ07}).
 Partly with computer assistance, we establish Theorem~\ref{thm:smalldegrees} in separate pieces; namely, it follows by combining Theorems~\ref{thm:chil*_Delta2}--\ref{thm:chic*_Delta3}, and~\ref{thm:maxdegree_improvement} below.

 As alluded to, we previously~\cite{CCDK21} established a bound about twice the conjectured optimal. In fact, we proved the following upper bound in terms of the degeneracy $\delta^\star(G)$ of $G$.

\begin{theorem}[{\cite[Thms.~3 and~9, Prop.~24]{CCDK21}}]\label{thm:degen}
For any graph $G$, $\chi_\ell^\star(G) \le \chi_c^\star(G) \le 2\delta^\star(G)$. The second inequality can be tight.
\end{theorem}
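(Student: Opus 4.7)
The first inequality $\chi_\ell^\star(G) \le \chi_c^\star(G)$ is essentially by definition. Any $k$-list-assignment $L$ can be reinterpreted as a $k$-correspondence cover by placing on each edge the matching which identifies colours of equal value in the two neighbouring lists, and under this reinterpretation list-packings are in bijection with correspondence-packings. I would dispense with this inequality in a single line.

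For the main bound, set $d = \delta^\star(G)$ and $k = 2d$, and fix a degeneracy ordering $v_1, \ldots, v_n$ of $V(G)$ in which each $v_i$ has at most $d$ neighbours among its predecessors $\{v_1, \ldots, v_{i-1}\}$. Given an arbitrary $k$-correspondence cover of $G$, my plan is to build a correspondence packing of size $k$ greedily by processing vertices in this order, maintaining the invariant that after step $i$ there exist $k$ pairwise-disjoint proper partial correspondence-colourings of $G[\{v_1, \ldots, v_i\}]$.

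The heart of the argument is the extension at $v_i$. Call the $k$ partial colourings \emph{slots} and form a bipartite availability graph $B_i$ on the $k$ slots and the $k$ colours of the cover at $v_i$, joining slot $j$ to colour $c$ whenever assigning $c$ to $v_i$ in slot $j$ clashes with no previously coloured neighbour. A perfect matching in $B_i$ supplies a valid simultaneous extension of all $k$ slots. Each slot forbids at most $d$ colours, one per already-coloured neighbour of $v_i$. In the other direction, for each such neighbour $v_{i'}$ the $k$ colours used at $v_{i'}$ across the slots are pairwise distinct by the packing invariant, and since the correspondence matching on the edge $v_{i'}v_i$ is injective, these map to $k$ distinct forbidden colours at $v_i$, one per slot. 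Hence each colour at $v_i$ is forbidden by any single neighbour in at most one slot, and so in at most $d$ slots altogether. Thus $B_i$ is a bipartite graph with equal parts of size $k$ and minimum degree at least $d = k/2$, and a standard Hall's-theorem argument (the Hall defect either forces a small set with too-small neighbourhood to contain a high-degree vertex, or forces the complement of its neighbourhood to violate the degree bound) supplies the required perfect matching.

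The only subtlety I foresee is the symmetric counting on the two sides of $B_i$; without exploiting the injectivity of the correspondence matching on each edge, the colour-side degree bound would collapse and Hall would no longer apply. For the tightness of the second inequality, I would simply invoke Theorem~\ref{thm:cycles}: every cycle $C_n$ satisfies $\delta^\star(C_n) = 2$ and $\chi_c^\star(C_n) = 4 = 2\delta^\star(C_n)$.
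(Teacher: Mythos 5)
Your argument is correct and is essentially the proof the paper imports from~\cite{CCDK21}: a greedy extension along a degeneracy ordering, where at each vertex the simultaneous extension of all $k=2\delta^\star(G)$ partial colourings is a perfect matching in the slot--colour bipartite graph, which exists by Hall since both sides have minimum degree at least $k/2$ (your two-sided degree count, using disjointness of the slots and injectivity of the correspondence matchings, is exactly the needed observation). Your tightness witness ($C_n$ with $\chi_c^\star(C_n)=4=2\delta^\star(C_n)$, via the independently proved lower bound in Theorem~\ref{thm:chic*_Delta2}) differs from the cited one (the bipartite construction of \cite[Prop.~24]{CCDK21}, which achieves tightness for every value of $\delta^\star$), but it is a valid, non-circular justification of the claim as stated.
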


\noindent
 This implies an upper bound of $2\Delta(G)$. It has proven frustratingly difficult to significantly improve on this in general. While the following result gives an improvement that is modest, particularly for large $\Delta(G)$, we note the bound is best possible for $\Delta(G)=4$ (see Theorem~\ref{thm:smalldegrees}\ref{itm:smalldegreesc} above).

\begin{theorem}\label{thm:maxdegree_improvement}
For any graph $G$ with $\Delta(G) \ge 4$,
\(\chi^\star_\ell(G) \leq \chi^\star_c(G) \leq 2\Delta(G)-2.\)
\end{theorem}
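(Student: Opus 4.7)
The plan is to bootstrap the degeneracy bound of Theorem~\ref{thm:degen} using a Brooks-style synchronization in the extremal case. Theorem~\ref{thm:degen} gives $\chi_c^\star(G) \le 2\delta^\star(G)$, so the result is immediate unless $\delta^\star(G) = \Delta$. In that remaining case $G$ contains a subgraph $H$ with $\delta(H) \ge \Delta$; since $\Delta(G) = \Delta$, $H$ must be $\Delta$-regular, and every $G$-neighbour of any vertex of $H$ lies in $V(H)$. Hence $V(H)$ is a union of connected components of $G$, and it suffices to prove the bound componentwise. So we may assume $G$ itself is connected and $\Delta$-regular. If $G = K_{\Delta+1}$, the bound cited in the introduction yields $\chi_c^\star(G) \le \Delta + 2 \le 2\Delta - 2$ since $\Delta \ge 4$.

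Henceforth assume $G$ is $\Delta$-regular, connected, non-complete, with $\Delta \ge 4$. The standard lemma underlying Brooks' theorem then supplies vertices $u, v_0, w$ with $u, w \in N(v_0)$, $uw \notin E(G)$, and $G - \{u, w\}$ connected. Fix a BFS tree of $G - \{u, w\}$ rooted at $v_0$ and enumerate $V(G) \setminus \{u, w, v_0\}$ as $y_1, \ldots, y_m$ in BFS order, so that each $y_i$ has its BFS-parent in $\{v_0, y_1, \ldots, y_{i-1}\}$. Set $k := 2\Delta - 2$ and (without loss of generality, as this only strengthens the adversary) assume every correspondence matching is a perfect bijection between the relevant lists. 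We construct the packing by processing vertices in the order
\[
 w,\; u,\; y_m,\; y_{m-1},\; \ldots,\; y_1,\; v_0,
\]
choosing a bijection $\pi_v : [k] \to L(v)$ at each step. For each $v$, this reduces to finding a perfect matching in an auxiliary bipartite graph $B_v$ between $[k]$ and $L(v)$, where $i \sim c$ iff no already-processed neighbour $x$ of $v$ satisfies $(\pi_x(i), c) \in M_{xv}$; both sides of $B_v$ have minimum degree at least $k - d_v$, with $d_v$ the back-degree of $v$. The vertices $w$ and $u$ have back-degree $0$, and each $y_i$ has back-degree $\le \Delta - 1$ because its BFS-parent is processed later. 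In all these cases the minimum degree in $B_v$ is $\ge \Delta - 1 = k/2$, and Hall's theorem supplies a perfect matching. The only problematic vertex is $v_0$, with back-degree $\Delta$.

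To save one unit at $v_0$, we use our freedom in the first two steps. Pick $\pi_w$ to be any bijection $[k] \to L(w)$, and then when processing $u$ set
\[
 \pi_u \;:=\; M_{uv_0}^{-1} \circ M_{wv_0} \circ \pi_w.
\]
This is a valid bijection $[k] \to L(u)$, and it ensures that for every $i \in [k]$ one has $M_{uv_0}(\pi_u(i)) = M_{wv_0}(\pi_w(i))$: the colours that $u$ and $w$ respectively forbid at $v_0$ in coloring $i$ coincide. Hence $u$ and $w$ jointly contribute only one forbidden colour per coloring at $v_0$, rather than two. Combined with the other $\Delta - 2$ back-neighbours, $v_0$ sees at most $\Delta - 1$ forbidden colours in each coloring, so both sides of $B_{v_0}$ have minimum degree $\ge \Delta - 1 = k/2$ and Hall's theorem delivers the final matching. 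The principal technical challenge is locating the Brooks pivot $u, v_0, w$ and verifying that the synchronization of $\pi_u, \pi_w$ is both realizable (hence the passage to perfect matchings) and genuinely saves a forbidden colour at $v_0$; the remaining extensions are routine iterations of Hall's theorem.
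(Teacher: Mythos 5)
Your core idea --- processing the vertices in reverse BFS order from a Brooks pivot $u,v_0,w$ and synchronising $\pi_u$ with $\pi_w$ via $\pi_u = M_{uv_0}^{-1}\circ M_{wv_0}\circ\pi_w$ so that $u$ and $w$ delete the \emph{same} perfect matching from the auxiliary graph $B_{v_0}$ --- is sound, and the Hall computations are correct: every auxiliary bipartite graph you encounter has parts of size $k=2\Delta-2$ and minimum degree at least $k/2$, which does imply Hall's condition. This is a genuinely different (and in the 2-connected non-complete case, cleaner) route than the paper's, which instead packs $G\setminus\{u,v\}$ for an \emph{arbitrary} edge $uv$ and then extends to $u$ and $v$ simultaneously by classifying the near-failures of Hall's condition and imposing two extra constraints on $\vec c(u)$.

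However, there is a genuine gap: the case $G=K_{\Delta+1}$ is not actually handled. You dispose of it by citing ``the bound from the introduction'' $\chi_c^\star(K_{\Delta+1})\le\Delta+2$, but no such upper bound is proved anywhere. The introduction's statement about Catlin's construction is a \emph{lower}-bound construction (showing sharpness of Conjecture~\ref{conj:chic*Delta}); the matching upper bound for $K_n$ is precisely what is open (Yuster's Conjecture 1.1), and the paper explicitly notes that $\chi_c^\star(K_5)\le 6$ is a \emph{new consequence} of Theorem~\ref{thm:maxdegree_improvement} itself --- so your appeal to it is circular, and for even $n$ the cited statement does not even apply. This case cannot be folded into your main argument, since $K_{\Delta+1}$ contains no induced path $uv_0w$, so there is no pivot to synchronise; you would need something like the paper's refined Hall analysis for extending to both endpoints of an edge inside a triangle. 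A secondary issue: the ``standard lemma'' producing $u,v_0,w$ with $G-\{u,w\}$ connected is a lemma about $2$-connected graphs; a connected $\Delta$-regular graph may have cut vertices, and in that case your reverse-BFS ordering breaks down (a component of $G-\{u,w\}$ not containing $v_0$ would end with a vertex of back-degree $\Delta$ and no savings). This is likely repairable by a block decomposition, but as written it is another unaddressed hole, whereas the complete-graph case is the substantive one --- it is exactly where the bound is tight for $\Delta=4$.
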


 For larger $\Delta(G)$, although the above bound is still around a factor $2$ larger than we conjectured, it turns out that we can still find reasonable support for Conjecture~\ref{conj:chil*Delta} when we consider a mild relaxation of the list and correspondence packing numbers. We later give more formal definitions of the fractional list packing number $\chi_\ell^\bullet(G)$ and the fractional correspondence packing number $\chi_c^\bullet(G)$ of $G$. They are derived naturally from the idea that instead of integral packing, we could allow fractional weightings of the list- or correspondence-colourings of $G$.
 The following thus constitutes the confirmation of a fractional relaxation of Conjecture~\ref{conj:chil*Delta}.
 
\begin{theorem}\label{thm:fractionalgreedy}
 For any graph $G$, $\chi_\ell^\bullet(G)\le \chi_c^\bullet(G) \le \Delta(G)+1$.
\end{theorem}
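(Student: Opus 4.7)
The proof proceeds via LP duality. The inequality $\chi_\ell^\bullet(G)\le \chi_c^\bullet(G)$ is immediate because every list-assignment can be viewed as a correspondence-assignment with identity matchings, so every fractional correspondence packing is also a fractional list packing. For the main bound, fix a $(\Delta+1)$-correspondence-assignment $L$ of $G$. The fractional correspondence packing number is the value of the LP maximising $\sum_c x_c$ over non-negative weightings $x_c$ on proper correspondence-$L$-colorings $c$, subject to $\sum_{c:c(v)=\alpha}x_c\le 1$ for each vertex-colour pair $(v,\alpha)$. This LP value is at most $\Delta+1$ by summing the constraints over $\alpha\in L(v)$. By LP duality, the bound $\chi_c^\bullet(G)\le\Delta+1$ reduces to showing that for every non-negative weighting $y_{v,\alpha}$, there exists a proper correspondence-$L$-coloring $c$ with $\sum_v y_{v,c(v)}\le\tfrac{1}{\Delta+1}\sum_{v,\alpha}y_{v,\alpha}$.

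To produce such $c$, I would run the randomised greedy algorithm suggested by the label of the theorem. Choose a uniformly random permutation $\pi$ of $V(G)$ and process vertices in this order; at each vertex $v$, assign $c(v)$ to be a colour in $L(v)$ minimising $y_{v,\cdot}$ among those not yet forbidden by earlier picks, breaking ties uniformly at random. Since $|L(v)|=\Delta+1>\deg(v)$, an unforbidden colour always exists at each step, so the output $c$ is a proper correspondence-$L$-coloring. It then suffices to prove $\mathbb{E}\bigl[\sum_v y_{v,c(v)}\bigr]\le\tfrac{1}{\Delta+1}\sum_{v,\alpha}y_{v,\alpha}$, from which the existence of a deterministic $c$ achieving the bound follows by the probabilistic method.

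The principal obstacle is bounding this expectation. The most direct attack is the per-vertex inequality $\mathbb{E}[y_{v,c(v)}]\le\tfrac{1}{\Delta+1}\sum_\alpha y_{v,\alpha}$, where the intuition is that the uniformly random ordering induces enough symmetry on the distribution of the forbidden set at $v$ that the expected minimum weight among non-forbidden colours is at most the average weight on $L(v)$. However, the forbidden set's distribution depends non-trivially on the greedy's earlier random choices and on the structure of the correspondence matchings (in particular when they are not perfect), making this estimate delicate. An alternative route is to recast the problem in the correspondence graph $H$ (with vertex set $\{(v,\alpha):\alpha\in L(v)\}$ and edges encoding forbidden pairs), so that proper correspondence-$L$-colorings become independent transversals of $H$; since $\Delta(H)\le\Delta$ and each block has size $\Delta+1\ge\Delta(H)+1$, one then seeks a weighted Caro--Wei-type bound for independent transversals under these conditions, which should reduce to essentially the same greedy analysis but in a slightly cleaner combinatorial setting.
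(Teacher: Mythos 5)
Your reduction via LP duality is set up correctly, and the overall strategy is genuinely different from the paper's; but as written the proposal stops exactly at the point where the mathematics happens. The inequality $\EE\bigl[\sum_v y_{v,c(v)}\bigr]\le\frac{1}{\Delta+1}\sum_{v,\alpha}y_{v,\alpha}$ is the entire content of the argument, and you offer only ``intuition'' for it while conceding the estimate is ``delicate'' and gesturing at an unproved Caro--Wei-type alternative. A duality reduction to an unproved expectation bound is not a proof, so there is a genuine gap. The good news is that the gap is closable, and your stated worries (dependence of the forbidden set on earlier greedy choices, imperfect matchings) are red herrings. You only need two facts: (a) each neighbour of $v$ coloured before $v$ forbids at most one colour of $L(v)$, so the forbidden set $S_v$ satisfies $|S_v|\le P_v$, where $P_v$ is the number of neighbours of $v$ preceding it in $\pi$; and (b) under a uniform random $\pi$, the rank $P_v+1$ of $v$ within its closed neighbourhood is uniform on $\{1,\dots,\deg(v)+1\}$. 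Since your greedy takes a minimum-weight unforbidden colour, $y_{v,c(v)}$ is at most the $(P_v+1)$-st smallest of the weights $y_{v,\cdot}$ on $L(v)$, \emph{regardless} of which colours happen to be forbidden. Averaging over the uniform rank gives the mean of the $\deg(v)+1$ smallest weights on $L(v)$, which is at most the mean of all $\Delta+1$ weights; hence $\EE[y_{v,c(v)}]\le\frac{1}{\Delta+1}\sum_{\alpha\in L(v)}y_{v,\alpha}$ for every $v$, and summing over $v$ finishes the duality argument. No symmetry of the forbidden set's distribution is needed.

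For comparison, the paper works on the primal side: it proves by induction on $|V(G)|$ a local-demands lemma stating that any correspondence-cover with $|L(v)|\ge\deg(v)+1$ admits a random independent set $I$ with $\Pr(x\in I)\ge 1/|L(v)|$ for every $x\in L(v)$, by choosing a uniformly random colour at a vertex of maximum list size, deleting its closed neighbourhood in the cover, and recursing; the only computation is a conditional-probability estimate at the neighbours of the chosen vertex. Both routes in fact prove the stronger ``degree'' version (demands $1/|L(v)|$ with $|L(v)|\ge\deg(v)+1$), and the paper's explicit construction of the distribution is what makes the local-demands statement reusable; your dual route, once supplied with the rank argument above, is a clean and self-contained alternative. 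But until that per-vertex bound is actually proved, the proposal does not establish the theorem.
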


\noindent
These fractional parameters may be of interest in their own right. Here and in Subsection~\ref{sub:contrast} we discuss some of their basic properties in comparison to their integral counterparts.

The following fractional form of our List Packing Conjecture as well as its correspondence analogue are worth further study.
\begin{conjecture}\label{conj:frac}\hfill
\begin{enumerate}[(i)]
 \item There exists $C>0$ such that $\chi_\ell^\bullet(G) \leq C \cdot \chi_{\ell}(G)$ for any graph $G$.
 \item There exists $C>0$ such that $\chi_c^\bullet(G) \leq C \cdot \chi_c(G)$ for any graph $G$.
\end{enumerate}
\end{conjecture}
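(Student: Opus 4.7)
The plan is to attack Conjecture~\ref{conj:frac} via linear-programming duality combined with random list-restriction, focusing first on the correspondence version (ii) since it enjoys more symmetry than the list version (i). Fractional packing is naturally an LP: $\chi_c^\bullet(G) \le m$ means that for every $m$-correspondence-list-assignment $L$ there is a probability distribution $\mu$ on proper $L$-colourings whose marginals satisfy $\Pr_{c \sim \mu}[c(v) = a] = 1/m$ for all $v$ and all $a \in L(v)$.

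Setting $k = \chi_c(G)$ and $m = Ck$ for a constant $C$ to be determined, the first step is to sample independent uniformly random $k$-subsets $S(v) \subseteq L(v)$, yielding a random $k$-correspondence-list-assignment $L_S$ which by definition admits a proper colouring. A candidate $\mu$ takes $S$ uniformly at random, then draws an $L_S$-colouring from some canonical rule. The marginal $\Pr[c(v) = a]$ factors as $\Pr[a \in S(v)] \cdot \Pr[c(v) = a \mid a \in S(v)]$, whose first factor is $1/C$ by uniformity of the subset choice.

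The main obstacle is the second factor: without further structure, the canonical $L_S$-colouring can strongly favour certain colours over others. Even the uniform distribution on all proper correspondence colourings of a fixed $L_S$ need not have uniform marginals at a vertex, as small examples show. My plan is to remove this bias by symmetrisation --- averaging over permutations of colour labels that preserve the correspondence structure --- and then to apply an LP-based rebalancing step that absorbs residual non-uniformity into the constant $C$. Making this quantitative, so that the marginals become uniform up to a constant factor rather than merely bounded away from zero, is the crux and where I expect genuinely new ideas to be needed.

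Part (i) is harder still. Since $\chi_c$ can exceed $\chi_\ell$ by an unbounded amount, (ii) does not imply (i), and list colouring lacks the correspondence framework's label symmetry that underlies the above symmetrisation step. A plausible alternative for (i) is to use degeneracy orderings and kernel-method arguments from list-colouring theory to construct balanced colouring distributions directly when lists are comfortably above $\chi_\ell(G)$; Theorem~\ref{thm:fractionalgreedy} suggests such constructions exist in the bounded-degree regime, and one may hope to extend them by relating $\chi_\ell(G)$ to a suitable fractional degeneracy-type parameter.
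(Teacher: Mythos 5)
The statement you are addressing is Conjecture~\ref{conj:frac}, which the paper leaves open; there is no proof in the paper to compare against, and your text is (as you acknowledge) a research programme rather than a proof. The programme as written has concrete gaps that go beyond ``needing to be made quantitative.'' First, the symmetrisation step has no foundation: a general correspondence-cover admits no global group of colour-label permutations acting on each list compatibly with the matchings. The matchings can be twisted adversarially, and the paper's own proof of Theorem~\ref{thm:chic*_Delta2} shows that the parity of the induced permutations around a cycle is an invariant of the cover --- exactly the kind of obstruction that averaging over relabellings cannot remove. If anything, the correspondence setting has \emph{less} symmetry than the list setting, which is why $\chi_c^\star \ge \chi_\ell^\star$ and why (ii) should be expected to be at least as hard as (i). Second, the ``LP-based rebalancing absorbed into $C$'' cannot work as described. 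By Proposition~\ref{prop:fracfacts}, $\chi_c^\bullet(G)\le m$ already forces every colour $a\in L(v)$ to extend to a full proper colouring; your random-restriction scheme gives no lower bound on $\Pr[c(v)=a\mid a\in S(v)]$ --- this conditional probability can be $0$ for specific cover vertices --- and no reweighting of a distribution repairs a zero (or exponentially small) marginal at constant cost. Establishing a uniform $\Omega(1/k)$ lower bound on these conditional marginals is not a rebalancing step; it is the entire combinatorial content of the conjecture.

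For part (i), the fallback route through degeneracy-type parameters is ruled out by the paper itself and by standard examples. Theorem~\ref{thm:subtleties}\ref{itm:subtlety1} (Proposition~\ref{prop:chi_ell_bullet_deg+2}) exhibits $d$-degenerate graphs with $\chi_\ell^\bullet(G)\ge d+2$, so the greedy/kernel argument behind Theorem~\ref{thm:fractionalgreedy} genuinely needs maximum degree and does not relativise to degeneracy. More fundamentally, $\chi_\ell(G)$ can be far smaller than any degeneracy- or degree-type parameter (e.g.\ $K_{n,n}$ has $\chi_\ell = \Theta(\log n)$ but degeneracy $n$), so no bound of the form ``fractional degeneracy $+O(1)$'' can ever be within a constant factor of $\chi_\ell(G)$. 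A viable attack on Conjecture~\ref{conj:frac} would need a mechanism that exploits the hypothesis $\chi_\ell(G)\le k$ (respectively $\chi_c(G)\le k$) itself --- for instance, many colourings or colourings extending arbitrary precolourings when lists have size $Ck$ --- and Proposition~\ref{prop:Dinitzpacking} is a warning that even an abundance of proper colourings does not by itself yield balanced marginals.
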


\noindent
A challenge of dealing with list packing is how we must confront our usual intuition from colouring. In treating the fractional relaxation, we hoped to claw back some of that intuition, which we succeeded in doing in part in Theorem~\ref{thm:fractionalgreedy}. Here are some obstacles and subtleties we must further account for.

\begin{theorem}\label{thm:subtleties}\hfill
\begin{enumerate}[(i)]
 \item\label{itm:subtlety1} For each $d\ge 2$, there is a graph $G$ satisfying $\delta^\star(G)=d$ and $\chi_\ell^\bullet(G) \ge d+2$.
 \item\label{itm:subtlety2} For each $k\ge 2$, there is a graph $G$ satisfying $\chi_c^\bullet(G) = k+1$ and $\chi_c^\star(G) = 2k$.
 \item\label{itm:subtlety3} There is a connected $3$-regular graph $G \ne K_4$ such that $\chi_\ell^\star(G) \ge \chi_\ell^\bullet(G) = 4$.
\end{enumerate}
\end{theorem}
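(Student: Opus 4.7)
We treat the three parts separately, each via an explicit construction.

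For (i), for each $d \geq 2$ we seek a $d$-degenerate graph $G_d$ with $\chi_\ell^\bullet(G_d) \geq d+2$. Since the greedy bound gives $\chi_\ell(G_d) \leq d+1$, the content of the statement is that the fractional packing number can strictly exceed the list chromatic number (and hence the greedy upper bound coming from degeneracy). The plan is to exhibit a $(d+1)$-list-assignment $L$ on $G_d$ for which the fractional packing LP has no feasible solution of size $d+1$. The construction likely combines several near-clique gadgets, e.g.\ copies of $K_{d+1}$ sharing vertices, in a way that preserves the degeneracy at $d$ while forcing any fractional packing to double-count colours at shared vertices; infeasibility is then certified by an LP dual witness or by a global counting argument summing fractional weights at vertex--colour incidences.

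For (ii), natural candidate graphs $G_k$ are $K_{k,k}$ or another $k$-regular sparse bipartite graph of degeneracy $k$. The upper bound $\chi_c^\star(G_k) \leq 2k$ is Theorem~\ref{thm:degen}, and the matching lower bound $\chi_c^\star(G_k) \geq 2k$ comes from adapting the tight correspondence cover of~\cite{CCDK21} that demonstrates the tightness of that bound. For the fractional side, the upper bound $\chi_c^\bullet(G_k) \leq k+1$ follows from vertex-transitivity: a single $L$-colouring, averaged over the automorphism group, yields a fractional packing of size $k+1$. The matching lower bound $\chi_c^\bullet(G_k) \geq k+1$ is obtained by exhibiting a correspondence $k$-cover admitting no fractional packing; for small $k$ this follows from $\chi_c(K_{k,k}) > k$, while for larger $k$ the obstructing cover is constructed explicitly.

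For (iii), we exhibit a small connected $3$-regular graph $G \neq K_4$, with $K_{3,3}$ and the Petersen graph as natural candidates. The upper bound $\chi_\ell^\bullet(G) \leq 4$ is immediate from Theorem~\ref{thm:fractionalgreedy}, and $\chi_\ell^\star(G) \geq \chi_\ell^\bullet(G)$ holds by definition. The main step is to exhibit a $3$-list-assignment $L$ for which the fractional $L$-packing LP is infeasible. We design $L$ by modelling a theta-type obstruction to $3$-list-colourability inside $G$, and verify infeasibility by LP duality or, for the smallest cases, by direct linear algebra or computer search.

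The most delicate step across all three parts is the LP infeasibility argument for the fractional parameter: we must exhibit, for a cleverly designed list or correspondence assignment, a dual or counting certificate showing the packing LP has no feasible solution, despite the graph being list- or correspondence-colourable from these lists. For the infinite families in (i) and (ii), the additional difficulty is making this certificate uniform in $d$ (respectively $k$), which is where we expect the bulk of the work to lie.
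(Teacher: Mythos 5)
Your proposal is a plan rather than a proof: none of the three parts contains an actual construction or a certificate of LP infeasibility, and in part~(ii) the candidate you name is provably wrong. You suggest $G_k=K_{k,k}$ with $\chi_c^\star(K_{k,k})=2k$, but already for $k=3$ the paper shows (in the proof of Proposition~\ref{prop:ineqs}) that $\chi_c^\star(K_{3,3})=4\ne 6$, so the lower bound $\chi_c^\star\ge 2k$ fails; the tightness of Theorem~\ref{thm:degen} for bipartite graphs requires a very lopsided graph. The paper instead takes $K_{a,k}$ with $a>k^k$: then $\chi_c^\star=2k$ by~\cite[Cor.~34]{CCDK21}, $\chi_c^\bullet\ge \chi_c\ge\chi_\ell=k+1$ since such lopsided complete bipartite graphs are not $k$-choosable, and $\chi_c^\bullet\le k+1$ by Proposition~\ref{prop:chicbullet_compbip}. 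Note also that your ``average a single colouring over the automorphism group'' argument for the fractional upper bound does not work as stated, because an adversarial correspondence-cover need not be invariant under any automorphism of $G$; the correct argument (Proposition~\ref{prop:chicbullet_compbip}) colours the small side uniformly at random and independently, then extends, exploiting only the \emph{local} symmetry of $L(a)\cup\bigcup_{b\in N(a)}L(b)$.

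For part~(i), ``copies of $K_{d+1}$ sharing vertices'' plus an unspecified dual witness is not enough: the paper's construction (Proposition~\ref{prop:chi_ell_bullet_deg+2}) is a long chain of layers $V_1,V_2,\dots$, each a copy of the previous one joined by a $K_{d+1,d+1}$ minus a perfect matching, with lists obtained by a carefully chosen sequence of $(i,j)$-shifts realising the permutation $(d,1,2,\dots,d-1)$ iterated $d-1$ times, capped by one extra vertex $w$. The infeasibility certificate is a propagation argument: Proposition~\ref{prop:fracfacts} forces every colouring in a putative fractional packing to be a full $L$-colouring with uniform marginals, a counting argument then shows each layer's colouring is \emph{determined} by the previous layer's, and the marginal constraint at $\{v_1^1,w\}$ yields $1=2/(d+1)$, a contradiction for $d\ge2$. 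You would need to supply all of this. For part~(iii), neither $K_{3,3}$ nor the Petersen graph is known to work for the \emph{list} (as opposed to correspondence) fractional packing number; the paper uses the $K_4^-$-necklace with the explicit $3$-list-assignment of Figure~\ref{fig:necklace} in Subsection~\ref{sec:brooks}, and the infeasibility there is a finite check (done by computer). Without concrete assignments and verifications, parts~(i) and~(iii) remain unproved, and part~(ii) as written is false.
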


\noindent
Theorem~\ref{thm:subtleties}\ref{itm:subtlety1} shows it impossible to improve Theorem~\ref{thm:fractionalgreedy} by replacing maximum degree by degeneracy,
while Theorem~\ref{thm:subtleties}\ref{itm:subtlety3} shows how even a fractional relaxation of a list packing analogue must differ from the usual Brooks' theorem.
We prove items~\ref{itm:subtlety1} and~\ref{itm:subtlety2} in Section~\ref{sec:fractional}, and item~\ref{itm:subtlety3} in Subsection~\ref{sec:brooks}.

We conclude this introductory section by confronting yet another tempting intuition. Isn't it easy to construct a list-packing once we have sufficiently many list-colourings? In Subsection~\ref{subsec:linegraphs}, we show how this line of thought needs something extra. For every integer $n\geq 2$, we exhibit a graph $G$ on $n^2$ vertices together with an $n$-list-assignment $L$ such that $G$ has
\begin{itemize}
 \item list chromatic number $n$,
 \item $n^{n^2}$ not-necessarily-proper $L$-colourings, and
 \item $n^{n^2(1+o(1))}$ proper $L$-colourings,
\end{itemize}
and yet it does not admit an $L$-packing.

\subsection{Notation, definitions, preliminaries}\label{sub:defns}

The reader will have noticed that we use a variety of standard graph theoretic notation including $\Delta(G)$ for the maximum degree, $\omega(G)$ for the clique number, $\delta^\star(G)$ for the degeneracy, and $\chi(G)$ for the chromatic number of a graph $G$.

Here now are some of the more formal notation and definitions most directly related to our list packing parameters.
Let $G$ and $H$ be graphs. 
A pair $\sH=(L,H)$ is a \emph{correspondence-cover} of a graph $G$ if the graph $H$ and mapping $L:V(G)\to 2^{V(H)}$ satisfy that
\begin{enumerate}[(i)]
 \item $L$ induces a partition of $V(H)$,
 \item the bipartite subgraph of $H$ induced between $L(u)$ and $L(v)$ is empty whenever $uv\notin E(G)$,
 \item\label{itm:corrdef} the bipartite subgraph of $H$ induced between $L(u)$ and $L(v)$ is a matching whenever $uv\in E(G)$, 
 \item the subgraph of $H$ induced by $L(v)$ is a clique for each $v\in V(G)$.
\end{enumerate}
It can be convenient to drop $\sH$ and $L$ from the notation, saying for example that some property of the correspondence-cover holds if it holds for $H$ as a graph.
A correspondence-cover is \emph{$k$-fold} if $|L(v)|=k$ for each vertex $v$ of $G$.

Note that a list-assignment $L$ of $G$ naturally gives rise to a correspondence-cover $(\tilde L, H)$ of $G$ by setting $\tilde L(v) = x_v$ for each $x\in L(v)$, and forming $H$ on these vertices by putting in the necessary cliques and adding edges of the form $x_ux_v$ for each colour $x\in\bigcup_{v\in V(G)}L(v)$ and edge $uv\in E(G)$. 
We call a correspondence-cover that arises from a list-assignment in this way a \emph{list-cover} of $G$.

We remark that in the literature, the concept of a cover is more general than a correspondence-cover, namely by the omission of condition~\ref{itm:corrdef}. Moreover, it can be defined in various ways, where in particular the cliques on the lists can be omitted.
In this paper, we embrace the inclusion of these cliques for convenience.
In this way, the definition of \emph{list packing number} $\chi_\ell^\star(G)$ given above is equivalent to the least $k$ such that for every $k$-list-assignment $L$, the associated cover $(\tilde L, H)$ has chromatic number $k$. 
Under this equivalence, each colour class in a proper $k$-colouring of $H$ is precisely an $L$-colouring of $G$.

It is now straightforward to define correspondence colouring and packing. 
Given a correspondence-cover $\sH=(L,H)$ of a graph $G$, we say that an $\sH$-colouring of $G$ is an independent set of size $|V(G)|$ in $H$ and an $\sH$-packing of $G$ of size $k$ is a partition of $H$ into $k$ $\sH$-colourings of $G$.
The \emph{correspondence chromatic number} $\chi_c(G)$ of $G$ is the least $k$ such that 
every $k$-fold correspondence-cover $\sH$ of $G$
has an independent set of size $|V(G)|$.
Similarly, the \emph{correspondence packing number} $\chi_c^\star(G)$ is the least $k$ such that  
every $k$-fold correspondence-cover $\sH$ of $G$ has chromatic number $k$ (i.e.\ every $k$-fold correspondence-cover $\sH$ of $G$ admits an $\sH$-packing).

One can interpret a list- or correspondence-packing of $G$ (of size $k$) as an assignment of $\{0,1\}$-weights to every possible independent set of size $|V(G)|$ in the corresponding cover of $G$ such that each cover vertex is assigned weight exactly once by some independent set containing it (and exactly $k$ independent sets are assigned nonzero weight).
Then, as is common in combinatorics and optimisation, one can naturally ``fractionally'' relax the constraint that the weights be integral (so they may take values in the interval $[0,1]$), while demanding that the total weight assigned to a cover vertex is $1$ (and that the sum of the weights of the independent sets is exactly $k$).
We find it particularly interesting to consider fractional relaxations of the packing numbers in our investigation of whether known results for $\chi_\ell$ and $\chi_c$ extend to the packing variants $\chi_\ell^\star$ and $\chi_c^\star$.
The \emph{fractional list packing number} of $G$, denoted $\chi_\ell^\bullet(G)$, is the least integer $k$ such that every $k$-fold list-cover of $G$ has fractional chromatic number $k$.
Similarly, the \emph{fractional correspondence packing number} of $G$, denoted $\chi_c^\bullet(G)$, is the least integer $k$ such that every $k$-fold correspondence-cover of $G$ has fractional chromatic number $k$.

Note that the fractional packing numbers can only take on integer values; their fractional character lies in the manner in which weights may be distributed over the independent sets, but is not reflected in the sum of the weights. We remark that there are other viable notions of fractional packing numbers, but we found it most natural to introduce the above.

Since every list-cover is a correspondence-cover we have
\begin{align*}
\chi_{\ell}(G) &\leq \chi_c(G),&
\chi_{\ell}^{\bullet}(G) &\leq \chi_c^{\bullet}(G),&\chi_{\ell}^{\star}(G) &\leq \chi_c^{\star}(G);
\end{align*}
and since the fractional chromatic number is a lower bound for chromatic number, the following inequalities also immediately follow from the above definitions:
\begin{align*}
 \chi_{\ell}(G) &\leq \chi_{\ell}^{\bullet}(G) \leq \chi_{\ell}^{\star}(G),& \chi_c(G) &\leq \chi_{c}^{\bullet}(G) \leq \chi_c^{\star}(G).
\end{align*}

We prove at the end of Section~\ref{sec:fractional} that these last inequalities can be strict.

\label{sub:contrast}

\begin{proposition}\label{prop:ineqs}
 For each of the following four inequalities, there is some graph $G$ satisfying it:
 \begin{align*}
 \chi_\ell(G)&<\chi_\ell^\bullet(G),&
 \chi_\ell^\bullet(G)&<\chi_\ell^\star(G),&
 \chi_c(G)&<\chi_c^\bullet(G),&
 \chi_c^\bullet(G)&<\chi_c^\star(G).
 \end{align*}
\end{proposition}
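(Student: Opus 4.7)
For each of the four strict inequalities, my plan is to exhibit a specific graph $G$ (together with a list- or correspondence-assignment where appropriate) that witnesses strict separation. Two of the inequalities will follow from a common example on $C_4$, one will be an immediate corollary of Theorem~\ref{thm:subtleties}\ref{itm:subtlety2}, and the fourth case requires a more delicate construction.

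For both $\chi_\ell(G) < \chi_\ell^\bullet(G)$ and $\chi_c(G) < \chi_c^\bullet(G)$, I would take $G = C_4$ with cyclic vertex order $v_1, v_2, v_3, v_4$ and the $2$-list-assignment $L(v_1) = L(v_2) = \{1,2\}$, $L(v_3) = \{2,3\}$, $L(v_4) = \{1,3\}$. A direct inspection shows that the associated list-cover $H$ is a theta graph, namely three internally disjoint paths of lengths $2$, $2$, and $5$ joining the vertices labelled $1_{v_1}$ and $2_{v_2}$. Two of these paths combine to form a $7$-cycle, so $H$ is non-bipartite, which gives $\chi_f(H) > 2$ and therefore no fractional $L$-packing of size $2$. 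Hence $\chi_\ell^\bullet(C_4) \ge 3$, while $\chi_\ell(C_4) = \chi_c(C_4) = 2$ as $C_4$ is bipartite. Since every list-cover is also a correspondence-cover, the same $H$ simultaneously yields $\chi_c^\bullet(C_4) \ge 3 > 2$.

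For $\chi_c^\bullet(G) < \chi_c^\star(G)$, I would simply invoke Theorem~\ref{thm:subtleties}\ref{itm:subtlety2}: at $k = 2$ it already supplies a graph with $\chi_c^\bullet(G) = 3$ and $\chi_c^\star(G) = 4$.

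The remaining inequality $\chi_\ell^\bullet(G) < \chi_\ell^\star(G)$ is the subtlest case, and I expect it to be the main obstacle. The goal is a graph $G$ and an integer $k$ so that \emph{some} $k$-list-cover $H$ of $G$ has $\chi(H) > k$ (witnessing $\chi_\ell^\star(G) > k$), while \emph{every} $k$-list-cover of $G$ has $\chi_f \le k$ (witnessing $\chi_\ell^\bullet(G) \le k$). My plan is to try adapting the correspondence construction underlying Theorem~\ref{thm:subtleties}\ref{itm:subtlety2} into an honest list-cover realisation; failing that, to search among small graphs whose list-covers realise Mycielski-, Kneser-, or Catlin-style structures exhibiting a known gap between $\chi$ and $\chi_f$. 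The main difficulty is the relative rigidity of list-covers compared to correspondence-covers, which makes it considerably harder to simultaneously engineer a list-cover with $\chi$ strictly exceeding $\chi_f$ and to rule out fractional-packing failures across \emph{every} other $k$-list-assignment of $G$.
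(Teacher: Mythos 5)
Your handling of the first inequality is correct and matches the paper's approach: the paper uses even cycles $C_{2n}$ with essentially the same list-assignment, and the odd cycle in the $2$-fold list-cover gives $\chi_f>2$, hence $\chi_\ell(C_4)=2<3\le\chi_\ell^\bullet(C_4)$. Invoking Theorem~\ref{thm:subtleties}\ref{itm:subtlety2} for $\chi_c^\bullet<\chi_c^\star$ is also legitimate and not circular (that item rests on Proposition~\ref{prop:chicbullet_compbip} together with the $K_{a,b}$ construction from our earlier work); the paper instead uses cycles, since $\chi_c^\bullet(C_n)=3<4=\chi_c^\star(C_n)$. However, the remaining two inequalities are not established.

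First, your claim that $\chi_c(C_4)=2$ because $C_4$ is bipartite is false. Correspondence colouring does not see bipartiteness: a $2$-fold correspondence-cover of $C_4$ in which one matching is twisted is (apart from the cliques on the lists) an $8$-cycle through all eight cover vertices with the two vertices of each $L(v_i)$ antipodal, and it has no independent transversal; in general $\chi_c(C_n)=3$ for all $n\ge 3$ (Dvo\v{r}\'{a}k--Postle). Combined with Theorem~\ref{thm:fractionalgreedy}, which gives $\chi_c^\bullet(C_4)\le\Delta+1=3$, one gets $\chi_c(C_4)=\chi_c^\bullet(C_4)=3$, so $C_4$ cannot witness $\chi_c(G)<\chi_c^\bullet(G)$. (Your $7$-cycle lives in a \emph{list}-cover, which only certifies the list-version of the inequality.) The paper instead uses $K_{3,3}$, where $\chi_c(K_{3,3})=3$ by a counting argument and $\chi_c^\bullet(K_{3,3})=4$ by computer verification. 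Second, for $\chi_\ell^\bullet(G)<\chi_\ell^\star(G)$ you only sketch a search strategy and produce no graph, so this case is simply missing; the paper's witness is the fan $F_7$ (a universal vertex added to $P_6$), for which a brute-force check gives $\chi_c^\bullet(F_7)=3$, hence $\chi_\ell^\bullet(F_7)=3$, while a specific $3$-list-assignment shows $\chi_\ell^\star(F_7)\ge 4$. Both of these gaps are real: as stated, your argument proves only two of the four separations.
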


The next result shows that a fractional packing is a stronger notion than the existence of a colouring extending every possible assignment of a colour to a vertex, and that fractional packings must be probability distributions over full list-colourings of the graph, i.e.~they cannot assign positive weight to non-maximum independent sets in the cover graph.

\begin{proposition}\label{prop:fracfacts}
 Let $G$ be a graph and $k\ge \chi_\ell^\bullet(G)$. Then for any $k$-list-assignment $L$ of $G$, for any $v\in V(G)$ and $x\in L(v)$, there is a proper $L$-colouring $c$ of $G$ with $c(v)=x$. 

 Similarly, for any $k$-fold list-cover $H$ of $G$ and fractional colouring $c$ of $H$ of weight $k$, $c$ assigns positive weight only to maximum independent sets of $H$, which are of size $|V(G)|$.

 These statements also hold, \emph{mutatis mutandis} for $\chi_c^\bullet$ and correspondence-covers.
\end{proposition}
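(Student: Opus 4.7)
The plan is to exploit the large cliques of size $k$ appearing as the sets $L(v)$ in the cover. The equality $\chi_f(H) = k$ together with $\omega(H) \ge k$ leaves no slack in the linear program defining the fractional chromatic number, and this rigidity will force every positive-weight independent set to be a full proper $L$-colouring.

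I would proceed as follows. Given $k \ge \chi_\ell^\bullet(G)$ and any $k$-list-assignment $L$ of $G$, let $(\tilde L, H)$ be the associated list-cover; by definition $\chi_f(H) = k$, so pick a non-negative weighting $w$ on independent sets of $H$ satisfying the covering constraint $\sum_{I \ni y} w(I) \ge 1$ for every $y \in V(H)$ and $\sum_I w(I) = k$. The key step is a double count against a fixed clique $L(v)$, using $|L(v)| = k$, $|I \cap L(v)| \le 1$ for every independent $I$, and the total weight $k$:
\[
k = \sum_{y \in L(v)} 1 \le \sum_{y \in L(v)} \sum_{I \ni y} w(I) = \sum_I w(I) \cdot |I \cap L(v)| \le \sum_I w(I) = k.
\]
Equality throughout forces $|I \cap L(v)| = 1$ whenever $w(I) > 0$, and $\sum_{I \ni y} w(I) = 1$ for each $y \in L(v)$.

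Running this argument for every $v \in V(G)$ shows that every $I$ with $w(I) > 0$ meets each clique $L(v)$ in exactly one vertex, so $|I| = |V(G)|$. Such $I$ is therefore maximum, since $V(H)$ partitions into $|V(G)|$ cliques of size $k$, and corresponds to a proper $L$-colouring of $G$; this settles the second statement. For the first, I would pick the cover vertex $x_v \in L(v)$ associated to the colour $x$: the derived equality $\sum_{I \ni x_v} w(I) = 1 > 0$ produces a positive-weight maximum independent set through $x_v$, which is exactly a proper $L$-colouring $c$ with $c(v) = x$. The correspondence-cover case requires no change at all: the only structural property used above is that each $L(v)$ induces a clique of size $k$, which is condition (iv) of the definition of correspondence-cover.

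I do not anticipate any genuine obstacle here. The content of the proposition is essentially the observation that when a graph admits a vertex-partition into maximum cliques of size $\omega = \chi_f$, the LP defining $\chi_f$ is tight everywhere, so the only routine step is writing out the double-counting inequalities carefully and translating the conclusion back to $L$-colourings via the correspondence between maximum independent sets of $H$ and proper $L$-colourings of $G$.
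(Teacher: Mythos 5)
Your proof is correct and is essentially the paper's argument: both exploit tightness in the LP for $\chi_f(H)=k$ via a double count against the cliques $L(v)$ (the paper sums globally over all of $V(H)$ using $\alpha(H)\le|V(G)|$, you sum clique-by-clique, which is the same computation organised locally), and both then deduce the first statement from the second by noting every cover vertex lies in some positive-weight maximum independent set. No gap; the clique-wise bookkeeping even gives the tightness of each covering constraint explicitly, but this adds nothing beyond what the global count already yields.
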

\begin{proof}
 The first statement follows from the second; the existence of a fractional colouring supported only on maximum independent sets implies that every vertex is contained in a maximum independent set.

 The second statement is a simple consequence of the bound $\chi_f(H) \ge N/\alpha$ that holds for any $N$-vertex graph $H$ with independence number $\alpha$.
 Every $k$-fold list-cover $H$ of $G$ has $|V(H)|=k|V(G)|$ and $\alpha(H)\le |V(G)|$, with equality when $k\ge \chi_\ell(G)$. 
 So if $\chi_f(H) = k = |V(H)|/\alpha(H)$, we have equality.
 Now, if $c$ is a fractional colouring of $H$ of weight $k$, which we interpret as a probability distribution on independent sets $I$ of $H$ such that $\Pr_c(x\in I)\ge1/k$ for each $x\in V(H)$, we have 
 \[ |V(G)| = \alpha(H) \ge \EE_c|I| = \sum_{v\in V(H)}\Pr_c(v\in I) \ge |V(H)|/k = |V(G)|, \]
 and hence every $I$ with positive probability has size $|V(G)|$.

 The same proofs work in exactly the same way for correspondence packing.
\end{proof}

By the previous, it is not hard to see that the following is an alternative, equivalent definition for the fractional list packing number.

\begin{definition*}
 Given a $k$-list-assignment $L$ of $G$, a \emph{fractional $L$-packing of $G$} is (for some $m \in \mathbb Z^+$) a collection of $mk$ (not necessarily distinct) proper $L$-colourings $c_1,\dots,c_{mk}$ of $G$, such that for every $v \in V$ and $c \in L(v)$ there are $m$ values $i \in [mk]$ for which $c_i(v)=c.$
 The smallest value of $k$ for which a fractional $L$-packing of $G$ exists for every $k$-list-assignment $L$ of $G$, is the fractional list packing number $\chi^\bullet_\ell(G).$ 
\end{definition*}

When working with explicit covers, we will often consider permutations of sets such as $\{1,2,3,4\}$ and $\{1_x,2_x,3_x\}$ endowed with a natural order that we assume is clear. 
It can be convenient to omit the subscripts, and usually we write a permutation of the set as an ordered sequence of comma-separated values, such as $f=(2,1,3)$ for the permutation $f$ with $f(1)=2$, $f(2)=1$, $f(3)=3$.
We do use standard cycle notation for transpositions, e.g.\ $(1\, 2)$ is a permutation of any ground set containing $\{1,2\}$ that swaps $1$ and $2$. 
The ground set will be clear from context.

We will use the following formulation of Hall's marriage theorem~\cite{Hal35}.

\begin{hallstheorem1}[\cite{Hal35}]\em
Given a family $\mathcal{F}$ of finite subsets of some ground set $X$, where the subsets are counted with multiplicity, suppose $\mathcal{F}$ satisfies the \emph{marriage condition}, that is that for each subfamily $\mathcal{F}' \subseteq \mathcal{F}$
\begin{align*}
|\mathcal{F}'| \le \left|\bigcup_{A\in \mathcal{F}'} A\right|.
\end{align*}
Then there is an injective function $f: \mathcal{F}\to X$ such that $f(A)$ is an element of the set $A$ for every $A\in \mathcal{F}$, that is, the image $f(\mathcal{F})$ is a \emph{system of distinct representatives} of $\mathcal{F}$.
\end{hallstheorem1}

\noindent
This can also be stated in the terminology of matchings in bipartite graphs.

\begin{theorem}[Hall's marriage theorem, graph theoretic formulation]\label{thm:Hall_graphversion}
If $G=(A \cup B,E)$ is a bipartite graph for which $\lvert N(A_1) \rvert \ge \lvert A_1 \rvert$ for every $A_1 \subseteq A$, then $G$ has a (maximum) matching of size $\lvert A \rvert$.
\end{theorem}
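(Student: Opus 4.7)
The plan is to derive the graph-theoretic formulation directly from the set-system version of Hall's marriage theorem stated immediately above. For each vertex $a\in A$, let $N(a)\subseteq B$ denote its neighbourhood in $G$, and consider the family $\mathcal{F}=\{N(a):a\in A\}$ of finite subsets of the ground set $X=B$, indexed by $A$ (so that the family is counted with multiplicity, since different vertices of $A$ may have identical neighbourhoods).

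Next I would verify the marriage condition for $\mathcal{F}$. Any subfamily $\mathcal{F}'\subseteq\mathcal{F}$ corresponds to some subset $A_1\subseteq A$ with $\mathcal{F}'=\{N(a):a\in A_1\}$ and $|\mathcal{F}'|=|A_1|$. By definition of $N(A_1)$, we have $\bigcup_{N(a)\in \mathcal{F}'}N(a)=N(A_1)$, so the hypothesis of the theorem gives
\[
\left|\bigcup_{N(a)\in \mathcal{F}'}N(a)\right|=|N(A_1)|\ge |A_1|=|\mathcal{F}'|,
\]
which is exactly the marriage condition. Hall's marriage theorem therefore produces an injective function $f:\mathcal{F}\to B$ with $f(N(a))\in N(a)$ for every $a\in A$.

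Finally, I would translate this system of distinct representatives back into a matching. Define $M=\{a\,f(N(a)):a\in A\}$. Each pair $a\,f(N(a))$ is an edge of $G$ because $f(N(a))\in N(a)$; and $M$ is a matching because each $a\in A$ contributes exactly one edge and injectivity of $f$ prevents two edges from sharing an endpoint in $B$. Hence $|M|=|A|$, and since no matching can saturate more than $|A|$ vertices of $A$, this matching is maximum.

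There is essentially no obstacle here beyond the bookkeeping of treating $\mathcal{F}$ as a multiset indexed by $A$ (so that $|\mathcal{F}|=|A|$ even when several vertices of $A$ share a neighbourhood); once that convention is fixed, the graph-theoretic formulation is a purely notational restatement of the set-system version.
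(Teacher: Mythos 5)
Your derivation is correct and matches the paper's intent exactly: the paper states the graph-theoretic formulation as a direct restatement of the set-system version (offering no separate proof), and your argument simply makes that translation precise by taking $\mathcal{F}=\{N(a):a\in A\}$ with multiplicity and converting the system of distinct representatives into a matching. Nothing is missing.
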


\subsection{Outline of the paper}\label{sub:outline}
	
The proofs for the upper bounds of the packing numbers of maximum degree $2,3$ and $4$ are given in the corresponding Sections~\ref{sec:paths+cycles},~\ref{sec:subcubic} and~\ref{sec:maxdegree}. In Section~\ref{sec:fractional} we give some results on fractional packing numbers.
Finally, in Section~\ref{sec:concrem} we conclude with observations on list packing of edge-colourings, possible variants of Brooks' theorem, and comments on algorithmic aspects.

\section{Paths and cycles}\label{sec:paths+cycles}

In this section, we determine the list and correspondence packing numbers for graphs with maximum degree $2$, the connected examples of which are paths and cycles. 
The main work is for cycles. 
For the list packing number, we prove that for a particular edge $uv$, a partial list-packing of $C_n \setminus uv$ can be extended simultaneously to both $u$ and $v$. 
For the correspondence packing number, we prove that it is strictly larger than $3$ by giving a construction. 

\begin{theorem}\label{thm:chil*_Delta2}
 For $n\ge 2$, the list packing number of the $n$-vertex path $P_n$ is $2$.
	For $n\ge 3$, the list packing number of the cycle $C_n$ is $3$.
\end{theorem}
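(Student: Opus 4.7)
For the path case, the bound $\chi_\ell^\star(P_n)=2$ is immediate from Theorem~\ref{thm:2packable}, since $P_n$ is a forest with at least one edge whenever $n\ge 2$. For the cycle, the lower bound $\chi_\ell^\star(C_n)\ge 3$ follows likewise from Theorem~\ref{thm:2packable}, because $C_n$ is not a forest. What remains is the upper bound $\chi_\ell^\star(C_n)\le 3$.

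Given any $3$-list-assignment $L$ of $C_n$ with vertices $v_1,\ldots,v_n$ in cyclic order, I would encode a putative list-packing of size $3$ as a sequence of bijections $\sigma_i\colon L(v_i)\to\{1,2,3\}$ satisfying $\sigma_i(c)\ne\sigma_{i+1}(c)$ for every $c\in L(v_i)\cap L(v_{i+1})$ and every (cyclic) edge $v_iv_{i+1}$. A quick count shows that, given $\sigma_i$, the number of valid $\sigma_{i+1}$ is exactly $6,4,3,2$ when $\lvert L(v_i)\cap L(v_{i+1})\rvert=0,1,2,3$ respectively; in particular, at least two choices are always available, so along any path a packing can be extended one vertex at a time.

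My plan for $C_n$ is to single out a particular edge $uv=v_nv_1$; build a packing of the interior path $v_2v_3\cdots v_{n-1}$ using the extend-one-vertex procedure above; and then extend simultaneously to $v_1$ and $v_n$ so that the three local constraints on edges $v_1v_2$, $v_{n-1}v_n$, and the closing edge $v_nv_1$ are met at once. I would cast the existence of a suitable pair $(\sigma_1,\sigma_n)$ as finding an edge in the bipartite graph whose two sides are the valid bijections $\sigma_1$ compatible with $\sigma_2$ and the valid $\sigma_n$ compatible with $\sigma_{n-1}$, with edges recording mutual compatibility on $v_nv_1$, and then verify Hall's condition via Theorem~\ref{thm:Hall_graphversion}. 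If Hall's condition fails for the initial interior packing, I would flip one of the interior derangement choices and retry.

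The main obstacle is the simultaneous extension in the tightest situation: every $L(v_i)$ is a common $3$-set, so every matching is full and each transition permits only two choices. In that setting, writing the per-step derangement as $(1\,2\,3)^{e_i}$ with $e_i\in\{1,2\}$, the entire packing reduces to choosing exponents $e_1,\ldots,e_n$ with $\sum_i e_i\equiv 0\pmod 3$ in $A_3$, which is achievable for every $n\ge 3$ because the possible sums form the $n+1\ge 4$ consecutive integers from $n$ to $2n$. In every other configuration, some edge has a partial matching that supplies strictly more flexibility, which I would use to absorb any residual obstruction at the closing edge. I expect the Hall-theoretic bookkeeping around this case split to be where most of the work lies.
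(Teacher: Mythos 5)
Two issues. First, your lower bound for cycles is circular in the context of this paper: Theorem~\ref{thm:2packable} is proved \emph{after}, and by means of, Theorem~\ref{thm:cycles}, which is itself deduced from the very statement you are proving (the ``only if'' direction of Theorem~\ref{thm:2packable} reads: if $G$ is not a forest it contains a cycle, which \emph{via Theorem~\ref{thm:cycles}} forces the list packing number to exceed $2$). You need an independent argument for $\chi_\ell^\star(C_n)\ge 3$. For odd $n$ this is just $\chi(C_n)=3$; for even $n$ one must exhibit a $2$-list-assignment with no packing, e.g.\ $L(v_i)=\{1,2\}$ for $i\le n-2$, $L(v_{n-1})=\{1,3\}$, $L(v_n)=\{2,3\}$, under which no proper $L$-colouring can give $v_1$ the colour $2$. (Your path argument is fine once rerouted through Theorem~\ref{thm:degen}, i.e.\ $2=\chi(P_n)\le\chi_\ell^\star(P_n)\le 2\delta^\star(P_n)=2$, which is all that the relevant direction of Theorem~\ref{thm:2packable} relies on.)

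Second, for the upper bound, your encoding by bijections $\sigma_i$, the transition counts $6,4,3,2$, and the $A_3$ argument in the all-lists-equal case are correct (the last is a clean alternative to the paper's ``take two cyclic translates of one proper colouring''). But the case you defer --- the closing edge $uv$ chosen with $L(u)\ne L(v)$, which you must explicitly arrange --- is precisely where the paper's work lies, and your sketch does not close it. Hall's theorem is the wrong tool here: you need a single mutually compatible pair $(\sigma_1,\sigma_n)$, i.e.\ one edge of your compatibility graph, not a matching. What is actually required is a counting argument: after packing the interior path there are at least two admissible choices at each of $u$ and $v$, and one checks that among all permutations of $L(v)$ at most one rules out \emph{every} admissible choice at $u$. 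The paper carries this out in the hardest subcase $\lvert L(u)\cap L(v)\rvert=2$ (with $L(u)=\{1,2,3\}$, $L(v)=\{1,2,4\}$, $\vec c(w)=(1,2,3)$): the two extensions to $u$ are $(3,1,2)$ and $(2,3,1)$, and only $\vec c(v)=(2,1,4)$ blocks both, so one of the $\ge 2$ admissible $\vec c(v)$ survives. Your fallback of ``flip an interior derangement and retry'' is then unnecessary, and as stated it is not a proof. With these two repairs your route coincides with the paper's.
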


\begin{proof}
The first statement follows from Theorem~\ref{thm:degen} since $2=\chi(P_n) \le \chi_{\ell}^\star(P_n) \le 2 \delta^\star(P_n)=2$.

For cycles, we first prove $\chi_\ell^\star(C_n)\ge 3$.
When $n$ is odd we have $\chi_{\ell}^\star(C_n) \ge \chi(C_n)=3$. 
When $n$ is even, we give a $2$-list-assignment $L$ of $C_n$ which does not admit an $L$-packing. Let the lists of the consecutive vertices $v_1$ up to $v_n$ of the cycle be $L_1, L_2, \ldots, L_n$ with
$L_i =\{1,2\}$ for $1 \le i \le n-2$, $L_{n-1}=\{1,3\}$ and $L_{n}=\{2,3\}$.
To rule out an $L$-packing, it is sufficient to observe that no proper $L$-colouring $c$ can satisfy $c(v_1)=2$.
An $L$-colouring $c$ with $c(v_1)=2$ must have $c(v_i)=2$ for every odd $i$ such that $1 \le i \le n-3$, and $c(v_i)=1$ for every even $i$ with $1\le i \le n-2$.
Continuing the colouring, $c(v_{n-1})$ must be $3$, and going backwards from $v_1$ we see that $c(v_n)$ must also be $3$, so $c$ cannot be proper.
As there does not exist a list-packing in this case, we have $\chi_{\ell}^\star(C_n)\ge3$. 
Alternatively, one can construct the cover $(\hat L,H)$ of $C_n$ using these lists and observe that it contains an odd cycle. Thus, the 2-fold list-cover has chromatic number strictly greater than $2$.

Now we prove the upper bound, that for every list-assignment $L$ of $C_n$ with lists of size 3 there exists an $L$-packing.
If all lists are the same, this is clear since we can pick an arbitrary proper colouring and take two translates of that one.
Then the remaining case involves adjacent vertices $u$ and $v$ such that $L(u) \ne L(v)$, and hence $\lvert L(u) \cap L(v) \rvert \le 2$. The hardest case to prove is when the intersection has size exactly $2$.
Without loss of generality, in this case we can take $L(u) =\{1,2,3\}$ and $L(v)=\{1,2,4\}$.
First, take an arbitrary $L$-packing $\vec{c} = (c_1, c_2, c_3)$ of $C_n \setminus \{u,v\}$ (which is isomorphic to a path $P_{n-2}$),
The worst case is that the neighbour $w$ of $u$ in $C_n\setminus\{u,v\}$ has the same list $L(w)=\{1,2,3\}$ as $u$, because if this is not the case there will simply be more options for extending the $L$-packing to $u$. 
Then without loss of generality we can assume that $c$ gives $w$ the colours $(c_1(w),c_2(w),c_3(w))=(1,2,3)$ in order, and hence $\vec c(u) =(3,1,2)$ and $\vec c(u) = (2,3,1)$ are both valid extensions of $c$ to $u$.
Similarly, there will be at least two possible choices for the extension of $c$ to $v$. 
We are done if out of the (at least) four possible extensions of $c$ to both $u$ and $v$, one of them is valid on the edge $uv$.
It is easy to see that among all permutations of $\{1,2,4\}$, only the choice $\vec c(v) = (2,1,4)$ would yield a situation in which case the extension to $u$ is impossible. So there is always a choice for $\vec c(v)$ such that the $L$-packing can be completed.
\end{proof}

\begin{theorem}\label{thm:chic*_Delta2}
	For $n\ge2$, the correspondence packing number of the $n$-vertex path $P_n$ is $2$.
	For $n\ge3$, the correspondence packing number of the cycle $C_n$ is $4$.
\end{theorem}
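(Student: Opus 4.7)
The plan is to obtain both bounds for paths and the upper bound for cycles directly from Theorem~\ref{thm:degen}, and to establish the lower bound for cycles by exhibiting a specific $3$-fold correspondence-cover whose chromatic number is forced to exceed $3$ by a parity obstruction. For paths, $\delta^\star(P_n) = 1$ and $\chi(P_n) = 2$ whenever $n \ge 2$, so Theorem~\ref{thm:degen} gives $2 = \chi(P_n) \le \chi_c^\star(P_n) \le 2\delta^\star(P_n) = 2$. For cycles, $\delta^\star(C_n) = 2$, hence $\chi_c^\star(C_n) \le 4$ by the same theorem.

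For the lower bound $\chi_c^\star(C_n) \ge 4$, I would exhibit a $3$-fold correspondence-cover $\sH = (L,H)$ of $C_n$ satisfying $\chi(H) \ge 4$. With $V(C_n) = \{v_1, \ldots, v_n\}$ and edges $v_i v_{i+1}$ (indices mod $n$), set $L(v_i) = \{1_i, 2_i, 3_i\}$, forming a triangle $T_i$ in $H$. For $i = 1, \ldots, n-1$, take the matching between $T_i$ and $T_{i+1}$ to be the \emph{identity} (edges $k_i k_{i+1}$ for $k \in \{1,2,3\}$), and take the matching between $T_n$ and $T_1$ to be the transposition $\pi = (1\,2)$, i.e.\ the edges $1_n 2_1$, $2_n 1_1$, $3_n 3_1$.

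To show $\chi(H) > 3$, I would argue by contradiction via a monodromy computation. A proper $3$-colouring of $H$ restricts to each triangle $T_i$ as a bijection $\sigma_i \colon \{1,2,3\} \to \{1,2,3\}$, upon identifying the colours with $\{1,2,3\}$. The identity matching between $T_i$ and $T_{i+1}$ (for $i < n$) forces $\sigma_{i+1} \sigma_i^{-1}$ to be a fixed-point-free permutation of $\{1,2,3\}$, hence a $3$-cycle lying in $A_3$; the matching at the seam forces $\sigma_1 \pi \sigma_n^{-1} \in A_3$ by the same reasoning. Telescoping,
\[(\sigma_1 \pi \sigma_n^{-1})(\sigma_n \sigma_{n-1}^{-1}) \cdots (\sigma_2 \sigma_1^{-1}) = \sigma_1 \pi \sigma_1^{-1},\]
so the left-hand side is a product of $n$ elements of $A_3$ and thus even, while the right-hand side is a conjugate of the transposition $(1\,2)$ and thus odd, a contradiction.

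The principal obstacle is identifying the right parity invariant: once one notices that every derangement of a three-element set is an even permutation, it becomes natural to twist the matchings around the cycle so that their composition is odd, forcing a parity obstruction. The remaining work is just bookkeeping to keep composition orders and identifications consistent throughout the telescoping product.
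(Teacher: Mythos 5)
Your proposal is correct and follows essentially the same route as the paper: both derive the path case and the upper bound for cycles from Theorem~\ref{thm:degen}, and both prove the lower bound by twisting a single matching of an otherwise-identity $3$-fold correspondence-cover by a transposition, then using the fact that every derangement of $\{1,2,3\}$ is even to obtain a parity contradiction around the cycle. The only (immaterial) difference is the choice of transposition at the seam.
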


\begin{proof}
 	The first statement is true since $2=\chi(P_n) \le \chi_{c}^\star(P_n) \le 2 \delta^\star(P_n)=2$.
	The upper bound $\chi_{c}^\star(C_n)\le 2 \delta^\star(C_n)=4$ follows from Theorem~\ref{thm:degen} as well.
	It remains to give, for every $n\ge 3$, a $3$-fold correspondence-cover $\sH=(L,H)$ of $C_n$ for which no $\sH$-packing exists.

 Let $xy$ be an arbitrary edge of $C_n$, and for each $v\in C_n$, let $L(v)=\{1_v,2_v,3_v\}$.
 Form the cover $H$ by connecting $i_u$ and $i_v$ for every edge $uv\in E(C_n)\setminus \{xy\}$. 
 Between $L(x)$ and $L(y)$, we connect $2_x$ to $3_y$ and $3_x$ to $2_y$, as well as $1_x$ to $1_y$. This is presented for $C_4$ and $C_5$ in Figure~\ref{fig:CorrespondenceCoverCycles}.
	Assume there is an $\sH$-packing, i.e.\ there is a vector $\vec c = (c_1,c_2,c_3)$ such that $c_i$ is an independent transversal with $c_i(v) \in L(v)$, and such that $c_i(v) \ne c_j(v)$ for $i \ne j$.
	For every $v$, we have that $\vec c(v)=(c_1(v),c_2(v),c_3(v))$ is a permutation of $(1_v,2_v,3_v)$.
	Furthermore, for any edge $uv$ other than $xy$, when we drop the subscripts and consider $\vec c(u)$ and $\vec c(v)$ as permutations of $\{1,2,3\}$, they do not have an index mapped to the same value.
 That is, $\vec c(v)\vec c(u)^{-1}$ is a derangement. In particular, $\vec c(v)\vec c(u)^{-1}$ is an even permutation because the only derangements of $\{1,2,3\}$ are the even permutations $(2,3,1)$ and $(3,1,2)$. 
 On the other hand, for the edge $xy$ we must have that $\vec c(x)\vec c(y)^{-1}$ is an odd permutation.
	This leads to a contradiction as follows. Permuting the labels if necessary, we may assume that $\vec c(x) = (1_{x}, 2_{x}, 3_{x})$ is the identity permutation. The above argument shows that going around the cycle in order, starting at $x$ and going away from $y$, each $\vec c(v)$ must be an even permutation, including $\vec c(y)$. We now have a contradiction as to prevent the packing from containing colourings that make $xy$ monochromatic, we must have that $\vec c(y)$ is an odd permutation.
\end{proof}	

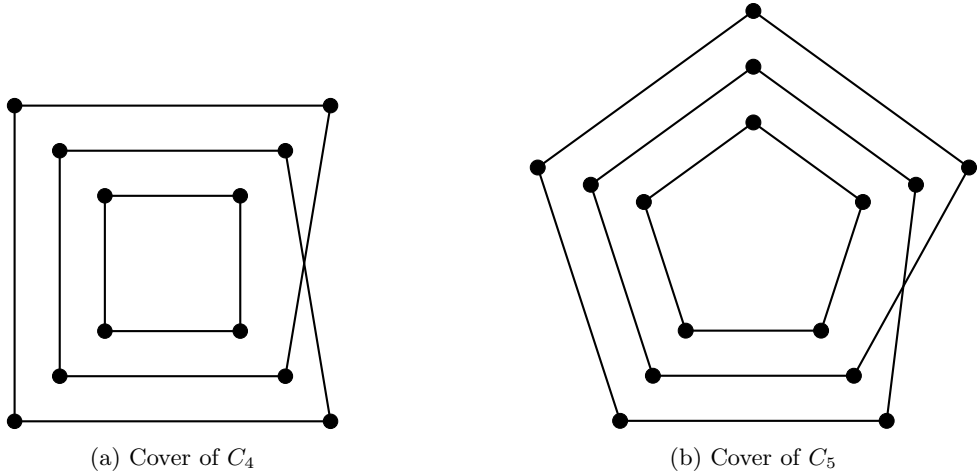
\begin{figure}[ht]
 \centering
 \subcaptionbox{Cover of $C_4$\label{fig:C4cover}}[.4\textwidth]{
 \begin{tikzpicture}[scale=0.6]

 \draw [thick] (-7, 1) -- (-7,-6);
 \draw [thick] (-7,-6) -- (0,-6);
 \draw [thick] ( 0,-6) -- (-1,0);
 \draw [thick] ( 0, 1) -- (-1,-5);
 \draw [thick] (-2,-1)-- (-2,-4);
 \draw [thick] (-2,-4)-- (-5,-4);
 \draw [thick] (-5,-1)-- (-2,-1);
 \draw [thick] (-1, 0)-- (-6,0);
 \draw [thick] (-7, 1)-- (0,1);
 \draw [thick] (-6, 0)-- (-6,-5);
 \draw [thick] (-5,-1)-- (-5,-4);
 \draw [thick] (-1,-5)-- (-6,-5);
 
 \draw [fill] (-5,-4) circle (0.16);
 \draw [fill] (-2,-4) circle (0.16);
 \draw [fill] (-2,-1) circle (0.16);
 \draw [fill] (-5,-1) circle (0.16);
 \draw [fill] (-6,-5) circle (0.16);
 \draw [fill] (-1,-5) circle (0.16);
 \draw [fill] (-1, 0) circle (0.16);
 \draw [fill] (-6, 0) circle (0.16);
 \draw [fill] (-7,-6) circle (0.16);
 \draw [fill] ( 0,-6) circle (0.16);
 \draw [fill] ( 0, 1) circle (0.16);
 \draw [fill] (-7, 1) circle (0.16);
 \end{tikzpicture}}
 \qquad\qquad
 \subcaptionbox{Cover of $C_5$\label{fig:C5cover}}[.4\textwidth]{
 \begin{tikzpicture}[scale=0.3]
 \draw [thick] (23,10.177322521524923)-- (13.443616015410528,3.2342031427713476);
 \draw [thick] (23,7.705186566525345)-- (15.794757024580422,2.4702711202711374);
 \draw [thick] (23,5.233050611525759)-- (18.145898033750317,1.706339097770922);
 \draw [thick] (18.145898033750317,1.706339097770922)-- (20,-4);
 \draw [thick] (20,-4)-- (26,-4);
 \draw [thick] (27.854101966249686,1.70633909777092)-- (26,-4);
 \draw [thick] (23,5.233050611525759)-- (27.854101966249686,1.70633909777092);
 \draw [thick] (30.205242975419576,2.4702711202711347)-- (28.906170112021446,-8);
 \draw [thick] (27.453085056010725,-6)-- (32.55638398458947,3.234203142771342);
 \draw [thick] (32.55638398458947,3.234203142771342)-- (23,10.177322521524923);
 \draw [thick] (23,7.705186566525345)-- (30.205242975419576,2.4702711202711347);
 \draw [thick] (15.794757024580422,2.4702711202711374)-- (18.546914943989275,-6);
 \draw [thick] (18.546914943989275,-6)-- (27.453085056010725,-6);
 \draw [thick] (28.906170112021446,-8)-- (17.093829887978554,-8);
 \draw [thick] (13.443616015410528,3.2342031427713476)-- (17.093829887978554,-8);
 \draw [fill] (20,-4) circle (0.33);
 \draw [fill] (26,-4) circle (0.33);
 \draw [fill] (27.854101966249686,1.70633909777092) circle (0.33);
 \draw [fill] (23,5.233050611525759) circle (0.33);
 \draw [fill] (18.145898033750317,1.706339097770922) circle (0.33);
 \draw [fill] (18.546914943989275,-6) circle (0.33);
 \draw [fill] (17.093829887978554,-8) circle (0.33);
 \draw [fill] (27.453085056010725,-6) circle (0.33);
 \draw [fill] (28.906170112021446,-8) circle (0.33);
 \draw [fill] (30.205242975419576,2.4702711202711347) circle (0.33);
 \draw [fill] (23,7.705186566525345) circle (0.33);
 \draw [fill] (15.794757024580422,2.4702711202711374) circle (0.33);
 \draw [fill] (32.55638398458947,3.234203142771342) circle (0.33);
 \draw [fill] (23,10.177322521524923) circle (0.33);
 \draw [fill] (13.443616015410528,3.2342031427713476) circle (0.33);
 \end{tikzpicture}}
	\caption{Correspondence-covers of cycles used in the proof of Theorem~\ref{thm:chic*_Delta2}. For clarity, we omit the cliques on the sets $L(v)$.\label{fig:CorrespondenceCoverCycles}}
\end{figure}

Theorem~\ref{thm:cycles} on the packing numbers of cycles is a direct consequence of Theorems~\ref{thm:chil*_Delta2} and~\ref{thm:chic*_Delta2}. Using this we can also characterise the graphs with packing numbers $2$.

\begin{proof}[Proof of Theorem~\ref{thm:2packable}]
A forest with at least one edge is $1$-degenerate, so by~Theorem~\ref{thm:degen} it has correspondence packing number at most $2$. Since $\chi(K_2)=2$, the list and correspondence packing number must in fact be exactly $2$. Conversely, if a graph is not a forest then it contains a cycle which via Theorem~\ref{thm:cycles} forces the list packing number to exceed $2$.
\end{proof}

\noindent
We note in passing that Theorems~\ref{thm:2packable} and~\ref{thm:cycles} imply that no graph $G$ exists for which $\chi_c^\star(G)=3.$ In~\cite{CH23}, it is proved that $3$ is the only positive integer that cannot be attained by the correspondence packing number.

\section{Subcubic graphs}\label{sec:subcubic}

In this section, we prove that both the list and correspondence packing numbers of subcubic graphs are at most $4$.
Here, it is sufficient to consider cubic graphs because a connected subcubic graph which is not 3-regular has degeneracy at most $2$, in which case the result follows from Theorem~\ref{thm:degen}.
The idea of extending an $L$-packing of $G \setminus \{u,v\}$, as done in Section~\ref{sec:paths+cycles}, does not always work in 3-regular graphs.
As such, we need to do substantially more work.
First, we verify the case $K_4$ separately, by a computer search over all $4$-fold correspondence-covers of $K_4$.
In a 3-regular graph that is not $K_4$, we can take an edge $uv$ which does not belong to any triangle, and then consider partial packings of an even smaller subgraph than $G \setminus \{u,v\}$, as we must also take some care when packing certain neighbours of $u$ and $v$.
Finishing the proof requires an argument similar to the proof of Theorem~\ref{thm:chil*_Delta2}, where we show that there must be a valid extension to both $u$ and $v$ that is also valid for the edge $uv$, but there are so many cases to check that it is convenient to verify them with computer assistance.

\begin{theorem}\label{thm:chic*_Delta3}
	For every graph $G$ with maximum degree $\Delta(G)\le 3$, we have $\chi_{\ell}^\star(G) \le \chi_{c}^\star(G) \le 4$.
\end{theorem}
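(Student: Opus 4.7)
The plan is to split into three cases: non-cubic subcubic graphs handled by degeneracy, the base case $G = K_4$ handled by a finite computer check, and connected cubic graphs other than $K_4$ handled by a local extension argument around an edge not in any triangle. The inequality $\chi_\ell^\star(G) \le \chi_c^\star(G)$ is automatic, so it suffices to bound $\chi_c^\star(G)$ by $4$.

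First I would reduce to the cubic case. If $G$ has a connected component that is not $3$-regular, that component is $2$-degenerate: every subgraph contains a vertex of degree at most $2$, since a subcubic graph with a degree-$\le 2$ vertex retains this property under vertex deletion. By Theorem~\ref{thm:degen} this component has correspondence packing number at most $4$, and since the correspondence packing number of a disjoint union is the maximum over components, we may assume $G$ is cubic. For the base case $G = K_4$, there are only finitely many $4$-fold correspondence-covers up to isomorphism (each determined by the six matchings between pairs of lists, modulo relabellings within each $L(v)$), and a direct computer search verifies that each such cover admits a partition into four independent transversals.

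Next, assume $G$ is connected, cubic, and not $K_4$, and fix any $4$-fold correspondence-cover $\sH = (L, H)$ of $G$. I would invoke the standard fact that such a $G$ contains an edge $uv$ lying in no triangle. Write $N(u) = \{v, u_1, u_2\}$ and $N(v) = \{u, v_1, v_2\}$; the triangle-free condition forces these six vertices to be distinct. The next step is to find a partial $\sH$-packing on a carefully chosen induced subgraph $G^- \subsetneq G \setminus \{u, v\}$ obtained by further deleting enough vertices near $u$ and $v$ to make the restricted cover manageable, e.g.\ by rendering $G^-$ $2$-degenerate so that Theorem~\ref{thm:degen} applies, or by inducting on $|V(G)|$. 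The deleted neighbours of $u$ and $v$ are then re-introduced one at a time (they have low restricted degree, so each has several valid assignments in each colouring), producing a partial $\sH$-packing of $G \setminus \{u, v\}$ in which the colour classes near $u$ and $v$ obey extra constraints.

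Finally, as in the proof of Theorem~\ref{thm:chil*_Delta2}, I would extend simultaneously to $u$ and $v$. For each of the four partial colourings, the constraints at $N(u) \setminus \{v\}$ together with the matchings $M_{uu_1}, M_{uu_2}$ leave at least two valid choices for the cover-vertex assigned to $u$, and similarly at $v$; the task is to coordinate these choices across the four colourings so that (i) the four elements chosen at $u$ exhaust $L(u)$ and likewise at $v$, and (ii) within each colouring the chosen pair at $(u,v)$ is not matched by $M_{uv}$. This is naturally phrased as a matching problem in an auxiliary bipartite graph and should follow from Hall's marriage theorem. The main obstacle I foresee is the sheer volume of subcases generated by the possible permutations that the partial packing induces on $\{u_1, u_2, v_1, v_2\}$ in combination with the matchings $M_{uu_i}$, $M_{vv_j}$, and $M_{uv}$; verifying Hall's condition in every configuration, while routine in principle, seems to require computer assistance, just as advertised in the outline.
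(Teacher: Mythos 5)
Your proposal follows essentially the same route as the paper: reduce to connected cubic graphs via degeneracy and Theorem~\ref{thm:degen}, dispatch $K_4$ by computer, locate an edge $uv$ in no triangle, build a partial packing on $G$ minus $u$, $v$ and some of their neighbours, re-insert those neighbours under extra constraints, and finish the simultaneous extension to $u$ and $v$ with a computer-assisted case analysis. The only cosmetic difference is that the paper's final step is not a clean Hall-type matching argument but an explicit avoidance of a computer-enumerated list of bad permutation triples at $u_2,v_1,v_2$ — which is exactly the kind of case volume you anticipated needing machine verification for.
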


\begin{proof}
	It suffices to prove, for every graph $G$ with maximum degree $3$ and any $4$-fold correspondence-cover $\sH=(L,H)$ of $G$, there exists a correspondence-packing. Clearly, it is sufficient to check all $4$-fold correspondence-covers in which full matchings are taken between $L(u)$ and $L(v)$ for each edge $uv$ of $G$. 
 One only has to check connected graphs $G$, and by Theorem~\ref{thm:degen} we only have to consider cubic graphs. Briefly, in the case that $G$ has a vertex $v$ with degree at most $2$, one can extend a correspondence-packing on $G \setminus \{v\}$ by Hall's marriage theorem as shown in the proof of Theorem~\ref{thm:degen} given as~\cite[Thm.~9]{CCDK21}.

 For the complete graph $K_4$, it is known that $\chi_{c}^\star(K_4)=4$. 
 This can be checked by brute force, as we have done with Sage\footnote{\url{https://github.com/StijnCambie/ListPackII}, document chic(K4).py}, and Yuster did in~\cite[App.~A]{Yus21}.

 Let $G=(V,E)$ be an arbitrary connected cubic graph on $n$ vertices such that $G$ is not $K_4$.
	By the following claim, $G$ has an edge $uv$ which is not part of a triangle. 
	\begin{claim}\label{clm:edgenotinK3}
	 Every connected cubic graph $G$ which is not equal to $K_4$, has an edge $uv$ which is not part of a triangle.
	\end{claim}
	\begin{claimproof}
	 Assume not. Let $a\in G$ be a vertex with $3$ neighbours $b$, $c$ and $d$.
	 The edges $ab, ac$ and $ad$ all belong to a triangle. 
	 This implies that there are at least $2$ triangles containing $a$ and there cannot be $3$ triangles containing $a$ as otherwise $G[\{a,b,c,d\}]$ would be isomorphic to $K_4$.
	 Without loss of generality, assume that $abc$ and $acd$ are triangles, and the edge $bd$ is the only edge missing.
	 Let $e$ be the third neighbour of $d$.
	 Since $a$ and $c$ already have degree $3$, they are not neighbours of $e$.
	 As such, $d$ and $e$ have no common neighbours, implying that $de$ is an edge not belonging to a triangle.
	\end{claimproof}

 Using the claim, let $uv$ be an edge of $G$ not contained in a triangle, and let $\sH=(L,H)$ be a $4$-fold correspondence-cover of $G$. 
 We let $u_1$, $u_2$ and $v_1$, $v_2$ be the two neighbours of $u$ and $v$ respectively, different from $u$ and $v$ themselves.
	The vertices $X=\{u_1,u_2, v_1,v_2\}$ are distinct by the choice of the edge $uv$.
 Since the edges $F=\{uu_1, uu_2, uv, vv_1, vv_2\}$ form a tree on $X$, we can `untwist' the (full) matchings in $H$ covering $F$ without loss of generality. That is, 
 we can label $L(x)=\{1_x,2_x,3_x,4_x\}$ for each $x\in X$ such that for each $xy\in F$, the matching in $H$ between $L(x)$ and $L(y)$ is the ``identity'' connecting $i_x$ to $i_y$ for $1\le i\le 4$.
	Let $\vec{c} = (c_1, c_2, c_3, c_4)$ be a correspondence-packing for $G\setminus\{u,v,u_2,v_1,v_2\}$, which exists by Theorem~\ref{thm:degen}.
 We can assume without loss of generality that $\vec c(u_1)=(c_1(u_1),c_2(u_1),c_3(u_1),c_4(u_1))=(1_{u_1},2_{u_1},3_{u_1},4_{u_1})$, and it is sufficient to prove that this packing $\vec c$ can be extended to a correspondence-packing for $G$.
 We can drop the subscripts and consider the vectors $\vec c(x)$, which we must define for $x\in X\setminus\{u_1\}$, as permutations of $\{1,2,3,4\}$.

\begin{figure}[H]
 \centering
 \begin{tikzpicture}
 \foreach \x in {0,4.25}
 {
 \draw[fill] (\x,0) circle (0.1);
 \draw[fill] (\x-1,2) circle (0.1);
 \draw[fill] (\x+1,2) circle (0.1);
 \draw[thick] (\x+1,2)--(\x,0) -- (\x-1,2);
 }
 \draw[thick] (0,0)--(4.25,0);

 \foreach \x in {-1,1,3.25,5.25}{

 \draw[dotted] (\x-0.25,2.5)--(\x,2)--(\x+0.25,2.5);
 }

 \node at (-1.35,2) {$u_1$};
 \node at (1.35,2) {$u_2$};
 \node at (2.9,2) {$v_1$};
 \node at (5.6,2) {$v_2$};
 \node at (0,-0.3) {$u$};
 \node at (4.25,-0.3) {$v$};
 \node at (0,-0.7) {$\vec{c}(u)$};
 \node at (4.25,-0.7) {$\vec{c}(v)$};

 \node at (-2.7,1) {$\vec c(u_1)=\begin{pmatrix} 1_{u_1}\\ 2_{u_1}\\ 3_{u_1}\\ 4_{u_1}\\ \end{pmatrix}$};

 \node at (1.35, 1.6) {$\vec{c}(u_2)$};
 \node at (2.9, 1.6) {$\vec{c}(v_1)$};
 \node at (5.6, 1.6) {$\vec{c}(v_2)$};
 \end{tikzpicture}
 \caption{The local structure of $G$. Note that there could be edges amongst $\{u_1,u_2,v_1,v_2\}$ which we do not attempt to picture.}\label{fig:localpart}
\end{figure}
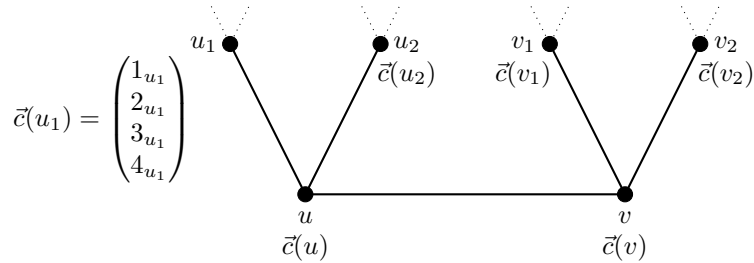

Starting from $\vec{c}$ (the partial packing for $G\setminus\{u,v,u_2,v_1,v_2\}$), we now do the following.
\begin{enumerate}
 \item Choose a permutation $\vec c(u_2)$ different from $(2, 3, 4, 1)$, $(2, 4, 1, 3)$, $(3, 1, 4, 2)$ and $(4, 1, 2, 3)$ such that $\vec c$ is a proper packing of $G\setminus\{u, v, v_1,v_2\}$. 
 Using a computer, we show that this can be done for any possible placement of the edges of $G\setminus\{u, v, v_1,v_2\}$ and choices of matchings covering the edges incident to $u_2$, using the fact that $u_2$ has at most two neighbours in $V\setminus\{u,v,v_1,v_2\}$. 
 
 \item Given the four special exclusions, there are 20 remaining permutations which $\vec c(u_2)$ could be. We show that these 20 choices can be partitioned as follows. 
 \begin{enumerate}[(i)]
 \item There are 10 `excellent' choices, for which the packing can be greedily extended to $v_1$, and then $v_2$ such that a valid choice of $(\vec c (u), \vec c (v))$ remains. 
 \item There are 8 `good' choices such that, provided exactly one problematic set $\{\vec c(v_1), \vec c(v_2)\}$ is avoided, a valid (at least one) choice of $(\vec c (u), \vec c (v) )$ remains.
 Avoiding one problematic set is always possible, since there are at least two available permutations when extending the partial packing to a vertex which has at most $2$ neighbours which are already coloured/packed.
 \item There are 2 `bad' choices such that there are 8 further problematic sets $\{\vec c(v_1), \vec c(v_2)\}$ that must be avoided. In this case it is not immediate that choices avoiding these problematic sets are possible, but we verify that the packing can be completed nonetheless.
 \end{enumerate}
\end{enumerate}

 Having given the idea of the steps to extend the partial list-packing, we now give the details why it works.
 Using a computer program\footnote{\url{https://github.com/StijnCambie/ListPackII}, document chic(Delta=3).py} we can list all possible choices (there are $112$ of them) for $(\vec c(u_2), \vec c(v_1), \vec c(v_2))$ for which the list-packing cannot be extended to both $u$ and $v$, i.e. to a full proper list-packing of $G$.
 In the code, this is marked by the comment ``In [1]''. Intuitively, since $112\ll(4!)^3$, there are only few problematic choices for $(\vec c(u_2), \vec c(v_1), \vec c(v_2))$ and we can avoid these, as we show next.
 
 No matter the colourings of the neighbours of $u_2$ different from $u$, we can choose $\vec c(u_2)$ different from $(2, 3, 4, 1)$, $(2, 4, 1, 3)$, $(3, 1, 4, 2)$ and $(4, 1, 2, 3)$.
 For this, we note that at most two neighbours of $u_2$ can be packed by $\vec c$ thus far.
 This is done at the comment ``In [4]'' in the code.
 Going through the $112$ bad triples at ``In [3]'', one concludes that there are $6$ ``bad'' choices for $\vec c(u_2)$ belonging to $16$ non-extendable triples, and $8$ ``good'' possibilities for $\vec c(u_2)$ to $2$ non-extendable triples.
 In the latter case, there is only one set $\{\vec c(v_1), \vec c(v_2)\} $ for which the extension was impossible (since switching the two gives the same obstruction). 
 In those cases one can always choose $\vec c(v_1)$ and $\vec c(v_2)$ such that they are not equal to such a bad set, since once the derangements of two neighbours of $v_2$ are chosen, there are at least two possible extensions for $v_2$ (see ``In [2]'').

 That is, in the good cases one can indeed complete the packing. 
 
 We can afford to reduce the $6$ ``bad'' cases to $2$, as there is enough flexibility to take $\vec c(u_2)$ different from $4$ of the $6$ bad choices.
 That is, up front we choose $\vec c(u_2)$ different from the bad choices $(2, 3, 4, 1)$, $(2, 4, 1, 3)$, $(3, 1, 4, 2)$ and $(4, 1, 2, 3)$.
 The remaining bad cases are when $\vec c(u_2)$ is equal to either $(3, 4, 2, 1)$ or $(4, 3, 1, 2)$.
 But in these two last cases, one can extend the partial (correspondence) colourings to $v_1$ and $v_2$, in such a way that $\vec c(v_1)$ and $\vec c(v_2)$ are not taken from the set $\{(1, 3, 2, 4), (3, 2, 1, 4), (4, 2, 3, 1), (1, 4, 3, 2)\}$ (see ``In [5]''). 
 This this case, we verify at ``In [6]'' that the triple of choices $(\vec c(u_2), \vec c(v_1), \vec c(v_2))$ does permit an extension of $\vec c$ to both $u$ and $v$ as required. 
 As such, we conclude that we always can choose $\{\vec c(x) \mid x \in \{u_1,u_2,v_1,v_2\} \}$ such that the correspondence-packing can be extended to $u$ and $v$ as well, i.e.\ we have a correspondence-packing for $G$.
\end{proof}

\section{Larger maximum degree}\label{sec:maxdegree}

In this section, we improve the upper bound $\chi_c^\star(G)\le2\Delta(G)$ (which follows from Theorem~\ref{thm:degen}) whenever $\Delta(G) \ge 4$. 
The method is a more careful analysis of Hall's marriage theorem, the main technique for proving Theorem~\ref{thm:degen}. For $\Delta(G)=4$, this results in a sharp upper bound for the correspondence packing number.

We start by stating some specific corollaries of Hall's marriage theorem.
The first version is just for completeness, but also indicates the nice structure of the counterexamples in the case when the conditions in Hall's marriage theorem are almost met.

 \begin{lemma}\label{lem:checkingHallCondition_graphversion_1}
 	Let $G=(A \cup B,E)$ be a bipartite graph with $\lvert A \rvert = \lvert B \rvert = 2m+1$ and minimum degree $m\ge 1$.
 	Then for every $A_1 \subseteq A$, we have that $\lvert N(A_1) \rvert \ge \lvert A_1 \rvert$ except possibly if $G$ has two disjoint induced subgraphs $K_{m,m+1}$ as subgraphs in the following way.
 	There are sets $A_1,A_2,B_1, B_2$ such that $A=A_1 \cup A_2, B=B_1 \cup B_2$ and $\lvert A_1 \rvert = \lvert B_2 \rvert = m+1$ and $\lvert A_2 \rvert = \lvert B_1 \rvert = m$ such that $G[A_1,B_1]$ and $G[A_2,B_2]$ are both isomorphic to $K_{m,m+1}$ and $G[A_1,B_2]$ is an empty graph.
\end{lemma}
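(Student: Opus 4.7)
My plan is to suppose Hall's condition fails for some $A_1 \subseteq A$, that is $|N(A_1)| < |A_1|$, and then show that the extremal structure described in the statement is forced by the degree condition. The argument proceeds by ``pinching'' the size of $A_1$ from both sides, so that it must equal exactly $m+1$, and then showing that this leaves no slack in the neighbourhoods.

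First I would bound $|A_1|$ from below. Set $k = |A_1|$, and note that $A_1$ must be nonempty (the empty set trivially satisfies Hall). Pick any $v \in A_1$; since $\deg(v) \geq m$ we have $|N(A_1)| \geq |N(v)| \geq m$. Combined with the failure of Hall, this gives $m \leq |N(A_1)| \leq k - 1$, so $k \geq m+1$.

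Next I would bound $|A_1|$ from above by looking at the opposite side. Write $B_1 = N(A_1)$, $A_2 = A \setminus A_1$, $B_2 = B \setminus B_1$. By the definition of $B_1$, there are no edges between $A_1$ and $B_2$; in particular every $b \in B_2$ has all its neighbours inside $A_2$. Provided $B_2 \neq \emptyset$ (which follows since $|B_1| \leq k-1 \leq 2m < |B|$), the minimum degree condition applied to any $b \in B_2$ yields $|A_2| \geq m$, i.e.\ $k = |A_1| \leq (2m+1) - m = m+1$.

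Combining the two bounds forces $k = m+1$, and then $|B_1| = m$, $|A_2| = m$, $|B_2| = m+1$. Now the extremal argument closes automatically: every vertex in $A_1$ has degree $\geq m$ with all neighbours inside $B_1$, but $|B_1| = m$, so each such vertex is adjacent to every vertex of $B_1$; hence $G[A_1,B_1] \cong K_{m+1,m}$. Symmetrically, every vertex of $B_2$ has degree $\geq m$ with all neighbours inside $A_2$, and $|A_2| = m$, giving $G[A_2,B_2] \cong K_{m,m+1}$. Since $G[A_1,B_2]$ is empty by construction, this is exactly the structure described in the lemma. I do not foresee a genuine obstacle here; the only small care needed is to note that the case $A_1 = \emptyset$ is trivial and that $B_2$ is nonempty so the degree condition can actually be invoked on the other side.
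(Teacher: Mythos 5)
Your proof is correct and follows essentially the same route as the paper's: both arguments show that $\lvert A_1\rvert\le m$ and $\lvert A_1\rvert\ge m+2$ each force Hall's condition via the minimum degree on the $A$- and $B$-sides respectively, pin down the exceptional case to $\lvert A_1\rvert=m+1$, $\lvert N(A_1)\rvert=m$, and then read off the complete/empty bipartite structure from the degree condition. No gaps.
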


\begin{proof}
 Since the minimum degree is $m$, every $A_1 \subseteq A$ with $\abs{A_1} \le m$ satisfies $\lvert N(A_1) \rvert \ge \lvert A_1 \rvert$. 
 On the other hand, since all vertices in $B$ also have minimum degree $m$, whenever $\lvert A_1 \rvert \ge m+2$ and thus $\lvert A \setminus A_1 \rvert < m$, every vertex in $B$ has a neighbour in $A_1$ and thus $N(A_1)=B$ again has size $\geq \lvert A_1 \rvert$.
 As such, the only exception is when $\lvert A_1 \rvert =m+1$ and $\lvert N(A_1)\rvert =m$.
 In that case, denote $A_2=A \setminus A_1$, $B_1=N(A_1)$, and $B_2=B\setminus B_1$.
 By the minimum degree condition and the definitions, we conclude that $G[A_1,B_2]$ is the empty graph and $G[A_1,B_1]$ and $G[A_2,B_2]$ are complete bipartite graphs. Examples are presented in Figure~\ref{fig:Hall_extr_2m+1_1}, where blue dashed edges can be present or not.
\end{proof}

\begin{figure}[ht]
 \centering
 \begin{tikzpicture}
 \foreach \y in {3,4} {
 \foreach \x in {2,3,4}{\draw[thick] (\x,2) -- (\y,0);}
 \foreach \x in {0,1}{\draw[dashed, blue, thick] (\x,2) -- (\y,0);}
 }
 \foreach \y in {0,1,2} {
 \foreach \x in {0,1}{\draw[thick] (\x,2) -- (\y,0);}
 }
	
 \node at (-0.5,2) {$B$};
 \node at (-0.5,0) {$A$};
	
	\foreach \x in {0,1,2,3,4}{\draw[fill] (\x,2) circle (0.1);}
	\foreach \x in {1,2,3,4,5}{\draw[fill] (\x-1,0) circle (0.1);}

	\end{tikzpicture}
 \caption{A bipartite graph $G \subset K_{5,5}$ with minimum degree two and without a perfect matching. \label{fig:Hall_extr_2m+1_1}}
\end{figure}
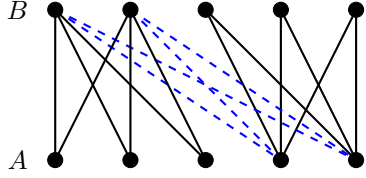

Next, we consider the case where the minimum degree is $m-1$ and the partition classes have size $2m$.

\begin{lemma}\label{lem:checkingHallCondition_graphversion_2}
	Let $G=(A \cup B,E)$ be a bipartite graph with $\lvert A \rvert = \lvert B \rvert = 2m$ and minimum degree $m-1$ for some $m \ge 2$.
	Then for every $A_1 \subseteq A$, we have that $\lvert N(A_1) \rvert \ge \lvert A_1 \rvert$ except possibly for 
	\begin{enumerate}
		\item $\lvert A_1 \rvert =m$ and $\lvert N(A_1) \rvert=m-1$
		\item $\lvert A_1 \rvert =m+1$ and $\lvert N(A_1) \rvert=m-1$
		\item $\lvert A_1 \rvert =m+1$ and $\lvert N(A_1) \rvert=m$
	\end{enumerate}
	Let $A_2=A \setminus A_1$, $B_1=N(A_1)$, and $B_2=B\setminus B_1$. 
	Then we have that $G[A_1,B_2]$ is the empty graph, and respectively the following hold:
	\begin{enumerate}
		\item $G[A_1,B_1]\cong K_{m,m-1}$,
		\item $G[A_1,B_1]\cong K_{m+1,m-1}$ and $G[A_2,B_2]\cong K_{m-1,m+1}$,
		\item $G[A_2,B_2]\cong K_{m-1,m}$
	\end{enumerate}
\end{lemma}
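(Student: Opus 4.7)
The plan is to mimic the case analysis of Lemma~\ref{lem:checkingHallCondition_graphversion_1} and exploit the minimum degree bound to rule out most possible sizes of $A_1$. Specifically, since every vertex has at least $m-1$ neighbours, for any $A_1 \subseteq A$ with $1 \le |A_1| \le m-1$ we immediately get $|N(A_1)| \ge m-1 \ge |A_1|$, so Hall's condition holds trivially. On the other hand, when $|A_1| \ge m+2$ we have $|A \setminus A_1| \le m-2$, so no vertex $b \in B$ can have all its (at least $m-1$) neighbours inside $A \setminus A_1$; hence $N(A_1)=B$ and $|N(A_1)| = 2m \ge |A_1|$. This leaves only the two cases $|A_1| = m$ and $|A_1| = m+1$ to inspect.

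For $|A_1| = m$, the only way Hall's condition can fail is $|N(A_1)| = m-1$, since $|N(A_1)| \ge m-1$ holds for free. Setting $B_1 := N(A_1)$ and $B_2 := B \setminus B_1$, by definition $G[A_1,B_2]$ is empty; and since every vertex of $A_1$ has at least $m-1$ neighbours, all lying in $B_1$ of size $m-1$, every vertex of $A_1$ is adjacent to every vertex of $B_1$. This yields $G[A_1,B_1] \cong K_{m,m-1}$, establishing case 1.

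For $|A_1| = m+1$, Hall's condition can fail with $|N(A_1)| \in \{m-1, m\}$ (lower values are impossible by the minimum degree bound). In the sub-case $|N(A_1)|=m-1$, the same argument as above gives $G[A_1,B_1] \cong K_{m+1,m-1}$; moreover $|A_2| = m-1$ and $|B_2| = m+1$, and every vertex of $B_2$ has no neighbour in $A_1$, hence all its $\ge m-1$ neighbours lie in $A_2$ of size $m-1$, forcing $G[A_2,B_2] \cong K_{m-1,m+1}$. This gives case 2. In the sub-case $|N(A_1)| = m$, we have $|A_2| = m-1$ and $|B_2| = m$; as before every $b \in B_2$ has all its neighbours in $A_2$ and at least $m-1$ of them, so $G[A_2,B_2] \cong K_{m-1,m}$, which is case 3. (Note that in this sub-case we cannot conclude anything about $G[A_1,B_1]$, which is consistent with what the lemma claims.)

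There isn't really a deep obstacle here; the task is simply a bookkeeping exercise analogous to Lemma~\ref{lem:checkingHallCondition_graphversion_1}. The only mildly delicate point is ensuring that the asymmetry between cases 2 and 3 is handled correctly: in case 2 one gets the ``double complete bipartite'' structure on both $(A_1,B_1)$ and $(A_2,B_2)$ because the minimum degree bound saturates on \emph{both} sides, whereas in case 3 the minimum degree bound only saturates for $B_2$, so no conclusion can be drawn about $G[A_1,B_1]$. Keeping these forced-degree arguments separated per case should make the verification routine.
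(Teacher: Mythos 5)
Your proposal is correct and follows essentially the same route as the paper: rule out $|A_1|\le m-1$ and $|A_1|\ge m+2$ via the minimum degree bound, then read off the complete-bipartite structure in the three remaining cases by applying the degree condition to $A_1$ (and to $B_2$ where the count saturates). The paper states this more tersely, but the argument is the same.
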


\begin{proof}
 Take an arbitrary subset $A_1 \subseteq A$. 
 If $\abs{A_1} \le m-1$, then due to the minimum degree condition $\delta(G)\ge m-1$, it is immediate that $\lvert N(A_1) \rvert \ge m-1 \ge \lvert A_1 \rvert$.
 In the case $\lvert A_1 \rvert \ge m+2$, and thus $\lvert A \setminus A_1 \rvert < m-1=\delta(G)$, every vertex in $B$ has a neighbour in $A_1$ and thus $N(A_1)=B$, so the condition in Theorem~\ref{thm:Hall_graphversion} again holds.
	 As such, the condition can only not hold when $\abs{A_1} \in \{m,m+1\}$ and $m-1 \le \abs{ N(A_1)}\le \abs{A_1}-1$. 
 The partial characterisation of the extremal graphs is true by the minimum degree condition applied to $A_1,$ $A_1$ and $B_2,$ and $B_2$ respectively.
\end{proof}

\begin{figure}[ht]
 \centering
 \begin{tikzpicture}
 \foreach \y in {3,4,5} {
 \foreach \x in {2,3,4,5}{\draw[thick] (\x,2) -- (\y,0);}
 \foreach \x in {0,1}{\draw[dashed, blue, thick] (\x,2) -- (\y,0);}
 }
 
 \foreach \y in {0,1,2} {
 \foreach \x in {0,1}{\draw[thick] (\x,2) -- (\y,0);}
 }
 
 \draw[red, thick] (4,2) -- (4,0)--(3,2);
 \draw[red, thick] (5,2) -- (5,0);
 
 \node at (-0.5,2) {$B$};
 \node at (-0.5,0) {$A$};
 	
 \foreach \x in {0,1,2,3,4,5} {
 \draw[fill] (\x,2) circle (0.1);
 \draw[fill] (\x,0) circle (0.1);
 }
 \end{tikzpicture}
 \qquad\qquad
 \begin{tikzpicture}
 \foreach \y in {4,5} {
 \foreach \x in {2,3,4,5}{\draw[thick] (\x,2) -- (\y,0);}
 \foreach \x in {0,1}{\draw[dashed, blue, thick] (\x,2) -- (\y,0);}
 }

 \foreach \y in {0,1,2,3} {
 \foreach \x in {0,1}{\draw[thick] (\x,2) -- (\y,0);}
 }

 \node at (-0.5,2) {$B$};
 \node at (-0.5,0) {$A$};
	
	\foreach \x in {0,1,2,3,4,5} {
 \draw[fill] (\x,2) circle (0.1);
 \draw[fill] (\x,0) circle (0.1);
 }
 \end{tikzpicture}
 \caption{Examples of bipartite graphs $G \subset K_{6,6}$ with $\delta(G)=2=m-1$ and without a perfect matching. Red edges present some potential missing edges in a $K_{m,m+1}$. Blue edges present potential edges that can be added.\label{fig:Hall_extr_2m+1}}
\end{figure}
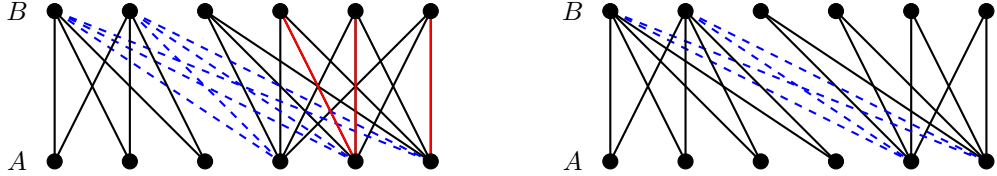

We are now ready to prove Theorem~\ref{thm:maxdegree_improvement}, which we recall states that for $\Delta\ge 4$, if $G$ is a graph of maximum degree $\Delta$ then $\chi^\star_c(G) \leq 2\Delta(G)-2$.

\begin{proof}[Proof of Theorem~\ref{thm:maxdegree_improvement}]
We may assume that $G$ is connected. If $G$ is not $\Delta$-regular, then $\delta^\star(G) \le \Delta-1$, and the theorem is true by~\cite[Thm.~9]{CCDK21}.
So we may assume that $G$ is $\Delta$-regular,
for a fixed $\Delta \ge 4$. 
Let $m=\Delta-1\ge 3$, and $k=2m =2\Delta-2$.
Let $H$ be a $k$-fold correspondence-cover of $G$, via some correspondence-assignment $L:V(G) \to 2^{V(H)}$. 
To be concrete, we label $L(v) = \{1_v , 2_v, \dotsc, k_v\}$ for each $v\in V(G)$ and we drop the subscripts where convenient.
Take an arbitrary edge $uv$ of $G$.
Without loss of generality (one can rename the colours if necessary), we assume that the matching in $H$ between $L(u)$ and $L(v)$ is the ``identity'' connecting $i_u$ to $i_v$ for $1\le i\le k$.

Let $\vec c$ be a correspondence-packing of $G\setminus\{u, v\}$ for the cover graph $H\setminus(L(u)\cup L(v))$, which exists by the non-regular case of the theorem. 
It suffices to extend $\vec c$ to both $u$ and $v$.
We will do so by first choosing $\vec c(u)$ by imposing at most two additional constraints on the choice, and then $\vec c(v).$
For every $1\le i \le k$, let $U_i = L(u)\setminus(\bigcup_{w \in N_G(u)\setminus \{v\}}N_H(c_i(w))$, and define $V_i$ similarly.
Note that the set $U_i$ collects all possible elements of $L(u)$ that can be used for $c_i(u)$ in a proper extension of $\vec{c}$ to $u$ and the same is true for $V_i$. However, some choices of pairs $c_i(u) \in U_i$ and $c_i(v) \in V_i$ may still be in conflict so cannot be used simultaneously for an extension of $\vec{c}$.

We first prove the following claim, that will be useful to show that a proper extension is possible.
\begin{claim}\label{clm:AvoidBadHall}
 Let $G=(A \cup B,E)$ be a bipartite graph with $\lvert A \rvert = \lvert B \rvert = 2m$ and minimum degree $m$ for some $m \ge 3$.
 Let $A=A_1 \cup A_2$ and $B=B_1 \cup B_2$ be partitions such that $\lvert A_1\rvert=m, \lvert B_1\rvert=m-1$,
	$G[A_1,B_1]\cong K_{m,m-1}$, and $G[A_1,B_2]\cong mK_2+K_1$.
	Then for a matching $M$ that contains at most $m-2$ edges of $G([A_1,B_2])$, $G \setminus M$ satisfies the conditions of Theorem~\ref{thm:Hall_graphversion}.
\end{claim}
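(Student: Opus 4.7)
I will argue by contradiction. Suppose Hall's condition fails in $G \setminus M$; since removing the matching $M$ drops the minimum degree to at least $m - 1$, Lemma~\ref{lem:checkingHallCondition_graphversion_2} supplies a set $A' \subseteq A$ with $B' := N_{G\setminus M}(A')$ falling into one of three extremal cases, namely $(|A'|,|B'|) \in \{(m,m-1),(m+1,m-1),(m+1,m)\}$, each carrying specific complete-bipartite and empty-bipartite constraints between $A'$, $A \setminus A'$, $B'$ and $B \setminus B'$ in $G \setminus M$. Let $b^*$ be the unique vertex of $B_2$ isolated in $G[A_1,B_2]$; because $b^*$ has no $A_1$-neighbor and $G$ has minimum degree $m$, $b^*$ must be adjacent in $G$ to every vertex of $A_2$. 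Throughout, I will exploit this adjacency together with the fact that distinct vertices of $A_1$ have distinct matching partners in $B_2$.

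I split based on whether $b^* \in B'$ or $b^* \notin B'$. When $b^* \notin B'$, the required absence of $G \setminus M$-edges between $A'$ and $B \setminus B'$, combined with $b^*$ being adjacent in $G$ to all of $A_2$, means that every $G$-edge from $A' \cap A_2$ to $b^*$ lies in $M$; since $M$ is a matching, $|A' \cap A_2| \le 1$. The size constraints on $A'$ then force $A' \supseteq A_1$, possibly with a single extra $A_2$-vertex. Because $B' \subseteq \bigcap_{a \in A' \cap A_1} N_G(a)$ and any two distinct $a, a' \in A_1$ have disjoint $B_2$-neighborhoods, this intersection is contained in $B_1$, so in fact $B' = B_1$ in Cases (i) and (ii), and $B'$ is essentially $B_1$ in Case (iii). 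The $A'$-to-$(B\setminus B')$ non-adjacency in $G\setminus M$ then forces each vertex of $A' \cap A_1$ to lose its $B_2$-matching partner through $M$, so $|M \cap G[A_1,B_2]|$ is at least $m - 1$ in Case (i) and at least $m$ in Cases (ii) and (iii), contradicting $|M_{12}| \le m - 2$.

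Now suppose $b^* \in B'$. In Case (i), $|B_2 \setminus B'| \ge 2$ holds for $m \ge 3$, and each such vertex $c \in B_2 \setminus B'$ has only its unique $A_1$-matching partner (outside $M$) together with at most one $A_2$-neighbor via $M$, giving total $G$-degree at most $2 < m$, contradicting the minimum degree hypothesis. In Case (ii), $A'$ being complete to $b^* \in B'$ in $G \setminus M$ would force $A' \subseteq N_G(b^*) = A_2$, impossible since $|A'| = m + 1 > m = |A_2|$. Case (iii) with $b^* \in B'$ is the main obstacle. I would subsplit on $|A' \cap A_2|$: if $|A' \cap A_2| = 1$ (so $A_1 \subseteq A'$), then tracking $r := |B_1 \cap (B \setminus B')| \in \{0, 1, \ge 2\}$ would in each subcase either force all $m$ distinct $A_1$-matching partners into a subset of $B' \cap B_2$ of size at most $2$ (a contradiction for $m \ge 3$) or yield at least $m$ edges of $M$ in $G[A_1, B_2]$; if $|A' \cap A_2| \ge 2$, then some $a_0 \in A_1 \setminus A'$ must be complete to $B \setminus B'$ in $G \setminus M$, and since $\deg_G(a_0) = m = |B \setminus B'|$ we must have $N_G(a_0) = B \setminus B'$, pinning $B' = B_2 \setminus \{p\}$ where $p$ is $a_0$'s matching partner; but then any other $a' \in A_1 \cap A'$ has $m - 1 \ge 2$ neighbors in $B_1 \subseteq B \setminus B'$, of which at most one can be removed via $M$, yielding the final contradiction.
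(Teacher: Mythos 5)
Your argument is correct in substance but follows a genuinely different route from the paper's. The paper verifies Hall's condition directly on the $B$-side: it takes an arbitrary $B'\subseteq B$ with $|B'|\in\{m,m+1\}$ (the only sizes that matter, by the degree count from the proof of Lemma~\ref{lem:checkingHallCondition_graphversion_2}) and lower-bounds $|N_{G\setminus M}(B')|$ by splitting on $|B'\cap B_1|$ and on whether the isolated vertex of $G[A_1,B_2]$ lies in $B'$. You instead argue by contradiction on the $A$-side, invoking the structural characterisation of Hall violators from Lemma~\ref{lem:checkingHallCondition_graphversion_2} and exploiting two facts the paper uses only implicitly: that $b^*$ is complete to $A_2$ in $G$, and that distinct vertices of $A_1$ have distinct partners in $B_2$. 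Your version localises the contradiction (a violator must ``pay'' either with too many edges of $M$ in $G[A_1,B_2]$ or with a vertex of $G$-degree below $m$), which is arguably more illuminating about why the hypothesis $|M\cap G[A_1,B_2]|\le m-2$ is the right one; the paper's version is shorter because it never needs the full structure of a violator.

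A few places need tightening before your proof is complete. In Case (i) with $b^*\notin B'$ you cannot conclude $A'\supseteq A_1$: one vertex of $A'$ may lie in $A_2$, leaving only $|A'\cap A_1|\ge m-1\ge 2$; this still pins $B'=B_1$ (two distinct vertices of $A_1$ have $G$-neighbourhoods intersecting in exactly $B_1$) and still yields at least $m-1>m-2$ matching edges. In Case (i) with $b^*\in B'$, the step ``at most one $A_2$-neighbour via $M$'' silently uses $A'=A_2$, which you should first derive from the completeness of $(G\setminus M)[A',B']$ together with $N_G(b^*)=A_2$. In Case (iii) with $b^*\notin B'$, ``essentially $B_1$'' should be replaced by the observation that $|B_1\setminus B'|\le 1$ (each $a\in A_1\subseteq A'$ can have at most one incident edge of $M$, hence at most one $G$-neighbour in $B\setminus B'$); the sub-case $|B_1\setminus B'|=1$ forces one vertex of $B_1$ to be $M$-matched to all of $A_1$, which is absurd, and the sub-case $B_1\subseteq B'$ gives $|B'\cap B_2|=1$ and hence at least $m-1$ (not $m$) matching edges in $G[A_1,B_2]$ --- still a contradiction. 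Finally, in the sub-case $|A'\cap A_2|=1$ of Case (iii) with $b^*\in B'$, your dichotomy should explicitly dispose of $|B_1\setminus B'|\ge 2$, which is immediate since each vertex of $A_1$ would then need two incident edges of $M$. With these repairs the argument goes through as you sketched it.
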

\begin{claimproof}
In this proof, every neighbourhood will be a neighbourhood in the graph $G \setminus M$, i.e., with $N$ we refer here to $N_{G \setminus M}$.
Let $b$ be the only vertex in $B$ which has no neighbour in $G$ belonging to $A_1$.

Take $B' \subseteq B$. Since $G \setminus M$ has minimum degree $\geq m-1$, we have $\lvert B' \rvert \le \lvert N(B') \rvert$ if $\lvert B' \rvert \not \in \{m,m+1\}$, as explained before in the proof of Lemma~\ref{lem:checkingHallCondition_graphversion_2}.
So now assume that $\lvert B' \rvert \in \{m,m+1\}$.
We consider three cases.

If $\lvert B' \cap B_1 \rvert \ge 2$, then $A_1 \subseteq N(B')$ and every vertex in $B' \cap B_2$ has at least $m-2\ge 1$ neighbours in $A_2$. Thus $\abs{N(B')} \ge \abs{A_1}+1= m+1 \ge \abs{B'}$.

If $b \in B' \subseteq B_2$,
then $\abs{N(b) \cap A_2} \ge m-1$ and $\abs{ N(B')\cap A_1 } \ge \abs{B'} -1-(m-2)$ (by choice of $M$).
So we conclude that $\abs{N(B')} \ge \abs{N(b) \cap A_2} + \abs{ N(B') \cap A_1} \ge \abs{B'}$.

If $b \not\in B' \subseteq B_2$ (so $B'=B_2 \setminus b$), then
$\abs{N(B') \cap A_2} \ge m-2$ and $\abs{ N(B') \cap A_1} \ge m -(m-2)=2$ (at most $m-2$ edges of the matching between $A_1$ and $B'$ are removed) and the conclusion follows again.

In the remaining case, $\lvert B' \cap B_1 \rvert =1$.
The vertex $B' \cap B_1$ has at least $m-1$ neighbours in $A_1$.
The vertices in $B' \cap B_2$ have at least $m-2$ neighbours in $A_2$.
If $\abs{B'}=m$, we conclude since $\abs{N(B')} \ge (m-1)+(m-2) \ge \abs{B'}$.
So we are left with $\abs{B'}=m+1$ and thus either $B'\cap B_2 = B_2 \setminus b$ which implies that $A_1 \subseteq N(B')$,
or $b \in B'$ and $\abs{N(b) \cap A_2} \ge m-1$.
In either case, we have $\abs{N(B')} \ge 2m-2 \ge m+1=\abs{B'}$.
\end{claimproof}

\begin{figure}[ht]
 \centering
 \begin{tikzpicture}
 \foreach \y in {0,1,2} {
 \foreach \x in {0,1}{\draw[thick] (\x,2) -- (\y,0);}
 }

 \foreach \x in {0,1,2,3,4} {
 \foreach \y in {3,4,5} {
 \draw[dashed, blue, thick] (\x,2) -- (\y,0);
 }
 }
 \foreach \x in {0,1,2} {
 \draw[thick, blue] (\x+2,2) -- (\x,0);
 }

 \foreach \x in {0,1,2} {
 \draw[ thick] (\x+3,0) -- (5,2);
 }
	
 \draw [decorate,decoration={brace,amplitude=4pt}] (0.0,2.15)--(1,2.15) node [black,midway,yshift=0.275cm]{$B_1$};
 \draw [decorate,decoration={brace,amplitude=4pt}] (2,-0.15)--(0,-0.15) node [black,midway,yshift=-0.35cm]{$A_1$};
 \draw [decorate,decoration={brace,amplitude=4pt}] (5,-0.15)--(3,-0.15) node [black,midway,yshift=-0.35cm]{$A_2$};

	\node at (5,2.3) {$b$};
 \node at (-0.5,2) {$B$};
 \node at (-0.5,0) {$A$};
	
	\foreach \x in {0,1,2,3,4,5} {
 \draw[fill] (\x,2) circle (0.1);
 \draw[fill] (\x,0) circle (0.1);
 }
 \end{tikzpicture}
 \caption{Sketch of Claim~\ref{clm:AvoidBadHall}, possible edges of $G$.\label{fig:claimrelatedhall}}
\end{figure}
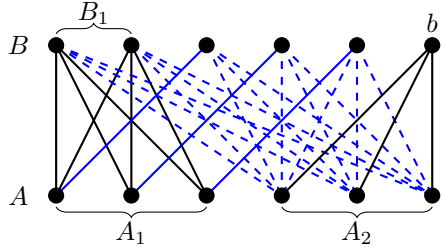

Construct the bipartite graph $G_v$ whose bipartition is $A=(V_i)_{ 1\le i \le 2m}$ and $B=[k]$
and an edge between $V_i$ and $j \in [k]$ if and only $j_v \in V_i$.
Define the bipartite graph $G_u$ analogously.
Since only $\Delta-1$ neighbours of $u$ are already packed, $G_u$ has minimum degree at least $2m-(\Delta-1) =m$, so $G_u$ satisfies Hall's marriage theorem, meaning that $G_u$ contains a perfect matching. This matching corresponds to a choice of $\vec{c}(u)$ which extends the packing $\vec{c}$ to $u$. However, we want to make this choice in a slightly more intricate way, since afterwards we also need to extend $\vec{c}$ to $v$. That is, we do not merely want to find a perfect matching in $G_v$, but rather a perfect matching in $G_v \setminus M$, for some matching $M$ determined by the choice of $\vec{u}$.

If there does not exist a matching $M$ such that $G_v \setminus M$ does not satisfy Hall's marriage theorem, then one can choose $\vec{c}(u)$ and then $\vec{c}(v)$ by assumption, since the latter corresponds to finding a matching in $G_v \setminus M$ for some matching $M$ that is determined by $\vec{c}(u)$.

If there exists a matching $M$ such that $G_v \setminus M$ does not satisfy Hall's marriage theorem, then $G_v\setminus M$ is a bipartite graph with minimum degree $m-1$ for which one of the three cases in Lemma~\ref{lem:checkingHallCondition_graphversion_2} is satisfied.
Up to renaming $A$ and $B$, in all three cases there are partitions $A=A_1 \cup A_2$ and $B=B_1 \cup B_2$ satisfying the conditions of Claim~\ref{clm:AvoidBadHall}.
Choose two specific edges of $G_v[A_1,B_2]$ (which is a matching $mK_2+K_1$), which correspond with pairs $(V_i, x_v), (V_j, y_v)$.
We can impose two additional constraints on the choice for $\vec{c}(u)$; $c_i(u) \not= x_u$ and $c_j(u) \not= y_u$ for some $x_u, y_u \in [k]$ and indices $1 \le i<j \le k$. 
These constraints can be implemented by taking $U'_i=U_i \setminus x$ and $U'_j=U_j \setminus y$ and $U'_{\ell}=U_\ell$ for the remaining indices in $[k]$.
Equivalently, we have deleted the edges $e_1=(U_i, x)$ and $e_2=(U_j, y)$ of $G_u$.
This implies that in both partition classes of $G_u$, at most $2$ vertices have degree equal to $m-1$.
Since in each of the three bad cases in Lemma~\ref{lem:checkingHallCondition_graphversion_2} there are at least $m\ge 3$ vertices in one partition class whose degree is $m-1$, we conclude that $G_u$ always contains a perfect matching.
That is, one can choose $\vec{c}(u)$ with $c_i(u) \in U'_i$ being distinct for every $1 \le i \le k$.
By definition of the $U_i$, this is an extension of the partial packing.
Once $\vec{c}(u)$ has been chosen like this, one can apply Hall's theorem again on $V'_i= V_i \backslash N(c_i(u)), 1 \le i \le k$ to find $\vec{c}(v)$. 
The edges $(V_i, N(c_i(u))\cap L(v))$ for $i \in [k]$ form a matching $M$ for which $G_v\backslash M$ has a perfect matching by Claim~\ref{clm:AvoidBadHall}. The latter perfect matching corresponds with a choice of $\vec{c}(v)$ that extends the partial packing to a
correspondence-packing $\vec{c}$ on $G$.
\end{proof}

Yuster~\cite{Yus21} investigated factors of independent transversals in graphs, and stated a conjecture~\cite[Conj.~1.1]{Yus21} equivalent to the case of complete graphs in our Conjecture~\ref{conj:chic*Delta}\ref{itm:chic*Delta}.
Yuster proved that $\chi_c^\star(K_4)=4$ by computer verification, and stated that the general case of establishing tight upper bounds on $\chi_c^\star(K_n)$ is wide open. 
Theorem~\ref{thm:maxdegree_improvement} immediately gives a tight upper bound for $n=5$. That is, we now know that $\chi_c^\star(K_5)=6$ by a proof that does not involve computer verification.

At this point, we have completed the proofs of all the parts of Theorem~\ref{thm:smalldegrees}. Combined with our previous results we can verify Conjectures~\ref{conj:chil*Delta} for small maximum degree.

\begin{proof}[Proof of Theorem~\ref{thm:smalldegrees}]
	For $\Delta=1$, the bounds follows from Theorem~\ref{thm:degen}.
	For $\Delta \in \{2,3\}$ we need Theorems~\ref{thm:chil*_Delta2}, \ref{thm:chic*_Delta2}, and~\ref{thm:chic*_Delta3}.
	Finally, Theorem~\ref{thm:maxdegree_improvement} gives the upper bound of $\chi_c^\star(G)\le 6$ when $\Delta=4$.
	All of these bounds are sharp due to the complete graph $K_{\Delta+1}$.
\end{proof}

\section{Fractional results}\label{sec:fractional}

We now prove Theorem~\ref{thm:fractionalgreedy}, establishing fractional versions of Conjecture~\ref{conj:chil*Delta}, where we find in part~\ref{itm:chic*Delta} the rounding unnecessary. 
Theorem~\ref{thm:fractionalgreedy} is an immediate corollary of the following generalisation in terms of fractional colouring with local demands, a concept introduced in~\cite{KP18}. 
We do not require a careful discussion of fractional colouring with local demands here, but we point out that a fractional colouring of weight $k$ is equivalent to a probability distribution on independent sets of a graph such that for every vertex, the marginal probability of inclusion in the independent set is at least $1/k$. 
Imposing local demands is a generalisation of this concept equivalent to allowing the required lower bound on the marginal probability to vary according to the vertex, as in the statement below.
We remark that the inductive proof really requires this stronger hypothesis, and note that the argument will not work with degeneracy instead of maximum degree. 
Indeed we observe in Proposition~\ref{prop:chi_ell_bullet_deg+2} that the statement with maximum degree replaced by degeneracy is false.

\begin{lemma} 
Let $G$ be a graph. Consider a correspondence-cover $(L,H)$ of $G$, such that $|L(v)|\ge \deg(v)+1$ for each vertex $v$ of $G$. 
Then there exists a probability distribution on independent sets $I$ of $H$ such that for every vertex $v$ of $G$ and every vertex $x\in L(v)$ of $H$, we have $\Pr(x \in I) \geq 1/ |L(v)|$.
\end{lemma}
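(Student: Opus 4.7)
I would prove this by induction on the number of vertices of $G$. The base case $|V(G)|=1$ is trivial (pick an element of $L(v)$ uniformly). In the inductive step, I would first perform a preprocessing reduction: we may assume that for each edge $uv\in E(G)$, the matching in $H$ between $L(u)$ and $L(v)$ saturates whichever of $L(u),L(v)$ is smaller. The reduction is legitimate because adding extra matching edges only shrinks the family of independent sets of $H$, so any distribution valid for the augmented cover remains valid for the original, and the hypothesis $|L(v)|\ge\deg(v)+1$ is unaffected.

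The key step is to pick a vertex $v$ with $k:=|L(v)|$ \emph{maximum} among all vertices of $G$. For each $x\in L(v)$, define the correspondence cover $(L^x,H^x)$ of $G-v$ by $L^x(u):=L(u)\setminus N_H(x)$, which removes at most one element from each $L(u)$ with $u\in N_G(v)$ (and leaves other lists unchanged). One easily checks that $|L^x(u)|\ge\deg_{G-v}(u)+1$ for every $u\in V(G-v)$, so the inductive hypothesis yields a probability distribution $\mu^x$ on independent sets of $H^x$ with $\Pr_{T\sim\mu^x}(y\in T)\ge 1/|L^x(u)|$ for every $y\in L^x(u)$. Since each $L^x(u)$ is a clique, a simple averaging argument (as in the proof of Proposition~\ref{prop:fracfacts}) forces these inequalities to be equalities and $T$ to be a transversal of $H^x$ almost surely.

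I would then combine these distributions into a distribution on independent sets of $H$ by sampling $X\in L(v)$ uniformly, drawing $T\sim\mu^X$, and outputting $I:=T\cup\{X\}$. By construction of $L^X$, no vertex of $T$ is adjacent to $X$ in $H$, so $I$ is independent. The marginal at $x\in L(v)$ is $\Pr(X=x)=1/k$ as desired; for $y\in L(u)$ with $u$ not adjacent to $v$, a direct computation gives $\Pr(y\in I)=1/|L(u)|$. The main computation is for $y\in L(u)$ with $u\in N_G(v)$: since $|L(u)|\le k$, the saturated matching between $L(u)$ and $L(v)$ matches $y$ to a unique $x_y\in L(v)$, and summing the contributions of the $|L(u)|-1$ matched $x\neq x_y$ (each contributing $1/(|L(u)|-1)$) and the $k-|L(u)|$ unmatched $x\in L(v)$ (each contributing $1/|L(u)|$) gives
\[
\Pr(y\in I)=\frac{1}{k}\left(1+\frac{k-|L(u)|}{|L(u)|}\right)=\frac{1}{|L(u)|}.
\]

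The hard part will be finding the right extension formula so that the marginals on the neighbors of $v$ come out exactly right. The interplay between the maximality of $|L(v)|$ and the saturation of the matchings is delicate: if $v$ does not have a maximum list size then $k<|L(u)|$ becomes possible for some $u\in N_G(v)$ and the marginal at $y\in L(u)$ drops below $1/|L(u)|$, while without the saturation reduction an unmatched element of $L(u)$ introduces an extra subtracted term that also spoils the computation. This is precisely the feature that makes the argument fail if degeneracy is substituted for maximum degree, matching the obstruction promised by Proposition~\ref{prop:chi_ell_bullet_deg+2}.
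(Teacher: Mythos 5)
Your proof is correct and follows essentially the same route as the paper's: induction on $|V(G)|$ after saturating the matchings, choosing a vertex $v$ of maximum list size, sampling $x\in L(v)$ uniformly and gluing it to the inductively obtained random independent set for $G-v$ with lists $L(u)\setminus N_H(x)$, and the identical three-case marginal computation for neighbours of $v$. The only cosmetic difference is that you upgrade the inductive marginals to exact equalities via the averaging argument of Proposition~\ref{prop:fracfacts}, whereas the paper works directly with the lower bounds; either suffices.
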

\begin{proof}
By adding edges to the cover $H$ if necessary, we may assume that for every edge $uv$ of $G$, the matching between $L(u)$ and $L(v)$ is maximum, i.e.\ of size $\min \{|L(u)|, |L(v)|\}$.
We proceed by induction on the number of vertices of $G$. 
The base case $G = \emptyset$ holds vacuously.

For the induction step, we take a vertex $v$ of $G$ whose list $L(v)$ has maximum size among all vertices of $G$.
Pick a uniformly random vertex $x \in L(v)$. By induction, there exists a random independent set $I_x$ in $H-N[x] $ which satisfies the lemma with respect to the reduced lists $(L(w)-N[x])_{w\in V(G)}$ obtained after removing $N[x]$.
Note that $H-N[x]$ is a valid correspondence-cover for $G-v$ via the map $L$ such that the list of each vertex is large enough, as any list which decreased in size decreased in size by exactly $1$, but the vertices whose lists decreased in size lost the neighbour $v$. 
That is, the conditions are satisfied because $|L(w)-N[x]|\geq |L(w)|-1 \geq \deg(w) = \deg_{G-v}(w)+1$ for every neighbour $w$ of $v$, while still $|L(w)-N[x]|=|L(w)|\geq \deg(w)+1=\deg_{G-v}(w)+1$ for every non-neighbour $w$ of $v$.

The union of $x$ and $I_x$ is the claimed random independent set $I$. Let us confirm this for three types of vertices:

\begin{itemize}
\item For every $y \in L(v)$, we have $\Pr( y \in I) = \Pr( y =x) = 1/ |L(v)|$, by definition.

\item If $u \in V(G)- N[v]$ and $y \in L(u) $, then $\Pr(y \in I) \geq 1/ |L(u)|$ is immediate, as this inequality holds conditioned on any choice of $x$.

\item Finally, let $u\in N(v)$ and let $y\in L(u)$. Note that then $|L(u)| \geq \deg(u)+1 \geq 2$.

We consider three cases for $x$: either it is adjacent to $y$ itself, it is adjacent to a colour in $L(u)\setminus\{y\}$, or it has no neighbours in $L(u)$. 
If $x\sim y$ then $y$ cannot be in $I$. 
If $x$ is adjacent to a colour in $L(u)\setminus\{y\}$ then $y$ is in $I_x$ and hence $I$ with probability at least $1/(|L(u)|-1)$, and in the case than $x$ has no neighbours in $L(u)$, the probability that $y$ is in $I$ is $1/|L(u)|$. 
Since the matching between $L(u)$ and $L(v)$ is maximum by assumption, and $L(v)$ is a list of maximum size, we have $|L(v) - N(L(u))|=|L(v)|-|L(u)|$.
This means that the probability of the third case satisfies 
\[ \Pr(x \notin N(L(u))) = \frac{|L(v)|-|L(u)|}{|L(v)|}, \] which is not too big. 
Putting the conditional probabilities together, we conclude that
\begin{align*}
\Pr(y \in I) &= \frac{1}{|L(u)|-1}\Pr(x \in N(L(u)-y)) + \frac{1}{|L(u)|}\Pr(x \notin N(L(u))) \\
&\ge
\frac{1}{|L(u)|-1} \cdot \frac{|L(u)|-1}{|L(v)|}
+
\frac{1}{|L(u)|} \cdot \frac{|L(v)|-|L(u)|}{|L(v)|}\\
&=
\frac{1}{|L(u)|}.
\end{align*}
\end{itemize}
As such, we have proved the required lower bound on $\Pr(y\in I)$ for every vertex of $H$, as required.
\end{proof}

In~\cite[Prop.~24]{CCDK21}, we gave a construction of a bipartite graph $G$ with degeneracy $d$ but with $\chi_c^\star(G)=2d$. Proposition~\ref{prop:chicbullet_compbip} shows that for this same graph $G$, $\chi_c^\bullet(G) \le d+1$, raising the question of whether the fractional correspondence packing number can exceed $d+1$ in $d$-degenerate graphs. 
We give an example showing even the fractional list packing number can.
The construction is the one we gave in~\cite[Thm.~25]{CCDK21}, but we we strengthen the analysis to show that in fact for the same graph $\chi_\ell^\bullet(G)\ge d+2$. For convenience, we repeat the construction here.

\begin{proposition}\label{prop:chi_ell_bullet_deg+2}
 For every $d \ge 2$,
 there exists a graph $G$ with degeneracy $d$ for which $\chi_\ell^\bullet(G)\ge d+2$.
\end{proposition}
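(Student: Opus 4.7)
The plan is to recycle the $d$-degenerate graph $G$ and the $(d+1)$-list-assignment $L$ from~\cite[Thm.~25]{CCDK21}. The first step is therefore to briefly reintroduce $G$ and $L$ and fix the associated notation; the remainder is a sharpened analysis of exactly the same list-cover, showing not only that no integer $L$-packing of size $d+1$ exists but also that no fractional one does.

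By Proposition~\ref{prop:fracfacts}, proving $\chi_\ell^\bullet(G)\ge d+2$ amounts to ruling out any probability distribution $\mu$ on proper $L$-colourings of $G$ with uniform marginals, meaning $\Pr_\mu(c(v)=x)=1/(d+1)$ for every vertex $v$ and every $x\in L(v)$. This is strictly stronger than the conclusion of~\cite[Thm.~25]{CCDK21}, which only rules out distributions $\mu$ supported on $d+1$ pairwise disjoint colourings. The plan is an LP-duality (double-counting) argument: I would single out a distinguished vertex $v^\star$ (or a small ``core'' subgraph) whose neighbourhood and lists force every proper $L$-colouring $c$ to avoid a specific family of local patterns around $v^\star$. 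Taking expectations, each such exclusion turns into a linear identity on the marginals of $\mu$; summed appropriately over $v^\star$ and its neighbours, the uniform marginal assumption yields a strict inequality that cannot be satisfied, contradicting the existence of $\mu$.

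Equivalently, the target is to produce nonnegative dual weights $\alpha_{v,x}$ on vertex--colour pairs such that $\sum_{v,x}\alpha_{v,x}\indicator{c(v)=x}\le M$ for every proper $L$-colouring $c$, while $\frac{1}{d+1}\sum_{v,x}\alpha_{v,x}>M$; this certifies infeasibility of a fractional $L$-packing of weight $d+1$.

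The main obstacle is upgrading the integer obstruction of~\cite[Thm.~25]{CCDK21} to a linear (fractional) one. The original argument presumably uses a finite pigeonhole or parity clash that does not lift directly to fractional packings; the work is in quantifying enough slack so that the inequality remains strict at the fractional level, or in locating the correct core subgraph whose proper $L$-colourings can be counted tightly enough. I expect the construction in~\cite{CCDK21} to already carry the necessary rigidity: the lists on the key vertices admit so few proper joint extensions that uniform marginals would inevitably over-assign some vertex--colour pair.
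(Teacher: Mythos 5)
Your proposal correctly identifies both the construction to reuse (the $d$-degenerate graph and $(d+1)$-list-assignment from~\cite[Thm.~25]{CCDK21}) and the right reformulation: by Proposition~\ref{prop:fracfacts}, it suffices to rule out a probability distribution on proper $L$-colourings with every marginal equal to $1/(d+1)$. That matches the paper's setup. However, the heart of the proof --- actually exhibiting the obstruction --- is missing. You say you ``would single out a distinguished vertex'' and that you ``expect the construction to already carry the necessary rigidity,'' but you never identify what that rigidity is or why it holds, so as written the argument does not close.

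The missing content is a propagation argument through the layers $V_1,V_2,\dots$ of the construction. Every proper $L$-colouring puts colour $d+2$ on at least one vertex of $V_2$ (a deterministic fact about the lists), while uniform marginals force the expected number of $V_2$-vertices receiving each colour $x\ne d+2$ to be at least $1$, hence the expected number receiving $d+2$ is at most $1$. Combining the two, every colouring in the support colours \emph{exactly} one vertex of $V_2$ with $d+2$, which pins down the colouring of $V_2$ as the shift $\s_{1,d+2}$ of the colouring of $V_1$. Iterating through all layers, the colouring of each $V_m$ in the support is a deterministic function of the colouring of $V_1$, and at the apex vertex $w$ one finds that colour $d+1$ is used exactly once on the pair $\{v_1^1,w\}$ in every supported colouring, whereas the marginals demand it appear $2/(d+1)<1$ times in expectation --- the contradiction. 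Note this is not a single family of local exclusions summed over one neighbourhood, as your LP-duality sketch suggests; it is a sequential rigidity argument that first restricts the support layer by layer and only at the end applies a counting inequality. By LP duality a dual certificate of the kind you describe must exist once infeasibility is established, but producing it directly, without the propagation step, is precisely the work your proposal leaves undone.
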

 
\begin{proof}
We will iteratively construct a graph $G$ with $\delta^\star(G)=d$ and a $(d+1)$-list-assignment $L$ such that the covergraph $H$ satisfies $\chi_f(H)>d+1$, implying $\chi_\ell^\bullet(G)\ge d+2$.
We will do so by constructing a sequence of subgraphs $G_1,G_2,\ldots, G$ such that $V(G_1) \subset V(G_2) \subset \ldots \subset V(G)$. 

We start by choosing $G_{1}=(V_1,E_1)=K_{d+1}$ and the associated lists being equal to $[d+1]$ for all vertices.
We now construct $G_2$ by adding a copy $v'$ for every $v \in V_1$ that is connected to all vertices in $V_1 \setminus v$.
Let $V_2=V(G_2) \setminus V_1$ and $L(v')=([d+1]\setminus \{1\}) \cup \{d+2\}$ for every $v'\in V_2$.
Repeating this procedure, in step $m$ we add copies $v'$ for every $v \in V_m$ and connect it to all vertices in $V_m \setminus \{v\}$ and call the set of added vertices $V_{m+1}$.
For $v' \in V_{m+1}$, we let $L(v')=\s_{ij}(L(v))=(L(v) \setminus \{i\} )\cup \{j\} $ for some $i,j \in [d+2]$, i.e.~an $(i,j)$-shift is applied to the lists.
Here we set $V_m=\{v_1^m,\ldots, v_{d+1}^m\}$, where $v_i^{m+1}$ denotes the copy of $v_i^m$.
We choose the shifts to be $\s_{1,d+2},\s_{2,1},\s_{d+2,2}$ in the first three steps.
In general, with a transposition $(i\ j)$
we associate the shifts $\s_{i,d+2},\s_{j,i},\s_{d+2,j}$. 

We repeat the procedure and form the permutation $(d, 1,2,3,\ldots, d-1)$
 
by applying the associated transpositions corresponding to $(1\, 2),(1\, 3),\dotsc,(1\, d)$ in order.
Now continue doing the exact same $3(d-1)$ transpositions another $d-2$ times.
Finally, add a vertex $w$ and connect it to $v_1^{3p(d-1)+1}$ for every $0 \le p \le d-1$, and let $L(w)=[d+1]$ as well.
In all steps, we connected new vertices to exactly $d$ existing vertices and so the degeneracy of the construction satisfies $\delta^{\star}(G)=d$.
Figure~\ref{fig:constr_d2_d+2} gives the construction for $d=2$.

We now analyse the construction, proving the claimed lower bound on the fractional chromatic number.
The plausible $L$-colourings of $G[V_1]$ give a permutation of $[d+1]$.
Fix such a colouring of $V_1$ and consider it a partial $L$-colouring of $G$.
There is one vertex in $V_2$ whose neighbours in $V_1$ are coloured with $[d+1]\setminus \{1\}$ and hence has to be coloured with $d+2$.
The other vertices in $V_2$ have two possible colours, $d+2$ and some $i \in [d+1]\setminus \{1\}$.

A fractional $L$-packing of $G$ is a fractional colouring of weight $d+1$ of the cover graph of $G$ associated with $L$, which corresponds to a random $L$-colouring $c$ of $G$ such that for each vertex $v$ of $G$ and every $x\in L(v)$, $\Pr(c(v)=x)=1/(d+1)$.
Since $L$ gives each vertex in $V_2$ the same list, for each colour $x\ne d+2$, the expected number of vertices in $V_2$ with $c(v)=x$ is at least $1$, and hence the expected number of vertices in $V_2$ which do not get colour $d+2$ is at least $d$. 
This means that the expected number of vertices in $V_2$ which get colour $d+2$ is at most $1$.
Since every $L$-colouring of $G$ has at least one vertex in $V_2$ coloured with $d+2$, we conclude that in fact every $L$-colouring in the fractional packing (i.e.\ which occurs with positive probability) gives exactly one vertex in $V_2$ the colour $d+2$.
This implies that for every colouring in the fractional packing, the colouring restricted to $V_1$ implies the colouring of $V_2$. More specifically, $c(v_i^2)=c(v_i^1)$ if $c(v_i^1)\not=1$ and $c(v_i^2)=d+2$ if $c(v_i^1)=1$,
or equivalently $c(v_i^2)=s_{1,d+2}c(v_i^1)$ for every $i \in [d+1]$.
Furthermore, this observation goes through when comparing partial $L$-colourings of $V_m$ and $V_{m-1}$.
As such, for any colouring $c$ in the fractional packing, $c\left(v_i^{3p(d-1)+1}\right)$ for $0 \le p \le d-1$ either contains all colours in $[d]$, or all of them are equal to $d+1$.
From this, we can conclude that $c(w)=d+1$ or $c(w) \in [d]$ respectively, i.e., $c(v_1^1)=d+1 \Leftrightarrow c(w)\ne d+1$.
The latter implies that the colour $d+1$ appears once on $\{v_1^1,w\}$, while on average it should appear $\frac{2}{d+1}$ times on these two vertices.
Since $d \ge 2$, this is a contradiction.
Hence no fractional $L$-packing exists and thus $\chi_{\ell}^\bullet(G)>d+1$.
\end{proof}

\begin{figure}[ht]
 \centering
 \begin{tikzpicture}
 \definecolor{cv0}{rgb}{0.0,0.0,0.0}
 \definecolor{c}{rgb}{1.0,1.0,1.0}

 \Vertex[L=\hbox{$v_3^1 \colon \{1,2,3\}$},x=3,y=9]{c}
 \Vertex[L=\hbox{$v_2^1 \colon \{1,2,3\}$},x=1.5,y=6]{b}
 \Vertex[L=\hbox{$v_1^1 \colon \{1,2,3\}$},x=3,y=3]{a}

 \foreach \x in {2} {
 \foreach \a in {1,2,3} {
 \Vertex[L=\hbox{$v_\a^\x \colon \{2,3,4\}$},x=3*\x,y=3*\a]{\x_\a};
 };
 }

 \foreach \x in {3} {
 \foreach \a in {1,2,3} {
			\Vertex[L=\hbox{$v_\a^\x \colon \{1,3,4\}$},x=3*\x,y=3*\a]{\x_\a};
 };
 }

 \foreach \x in {4} {
 \foreach \a in {1,2,3} {
 \Vertex[L=\hbox{$v_\a^\x \colon \{1,2,3\}$},x=3*\x,y=3*\a]{\x_\a};
 };
 }
 
 \Vertex[L=\hbox{$w \colon \{1,2,3\}$},x=7.5,y=0]{v};
	\draw[line width=0.65mm, color=blue] (a)--(v)--(4_1);
	\draw[line width=0.65mm, color=blue] (a)--(b)--(c)--(a);
 \draw[line width=0.65mm] (a)--(2_3)--(b)--(2_1)--(c)--(2_2)--(a);

 \draw[line width=0.65mm] (2_3)--(3_1)--(2_2)--(3_3)--(2_1)--(3_2)--(2_3);
 \draw[line width=0.65mm] (3_3)--(4_1)--(3_2)--(4_3)--(3_1)--(4_2)--(3_3);
 \end{tikzpicture}
 \caption{The construction of Proposition~\ref{prop:chi_ell_bullet_deg+2} for $d=2$.\label{fig:constr_d2_d+2}}
\end{figure}
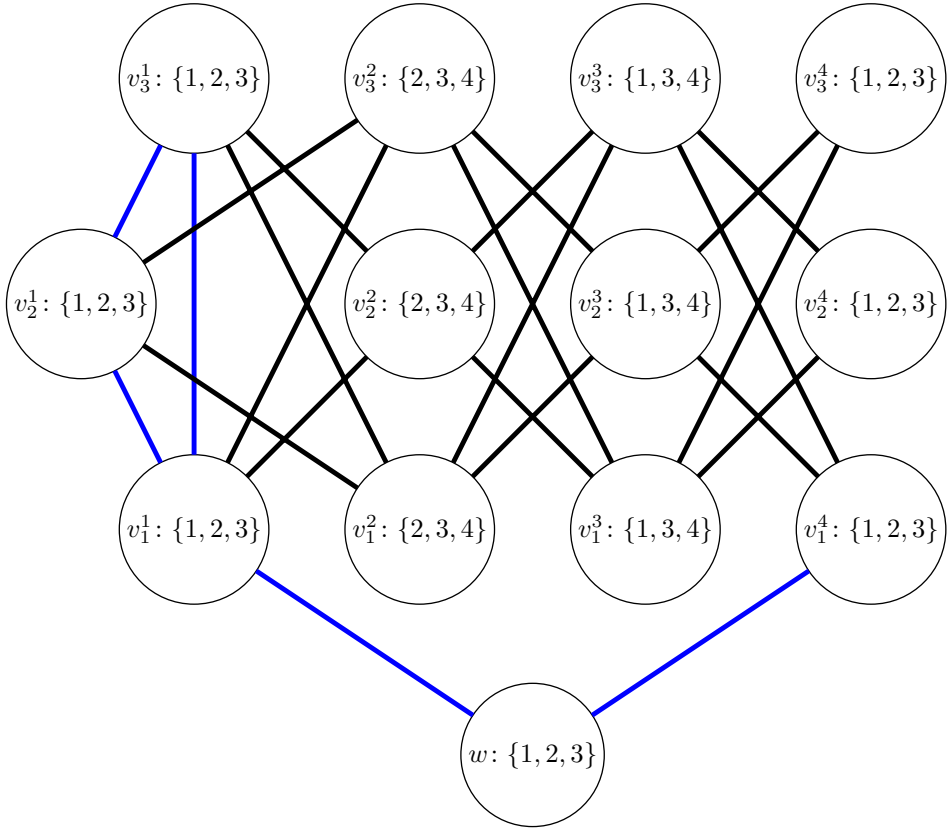

Our next positive result is a version of the greedy bound for bipartite graphs where one is permitted to take the smaller of the maximum degrees over vertices in each part of a bipartition. 
In~\cite[Lem.~33]{CCDK21}, we showed the analogous upper bound for $\chi_\ell^\star$. 
Here, our bound applies to the fractional variant of correspondence packing as well, though the analogue for correspondence packing is false.
In~\cite[Cor.~34]{CCDK21}, we showed that for every complete bipartite graph $K_{a,b}$ with $a>b^b$, we have $\chi_{\ell}^{\star}(K_{a,b})=b+1$ while $\chi_c^{\star}(K_{a,b})=2b$. 
This demonstrates a constant factor gap between list and correspondence packing numbers. 
The proposition below implies that $\chi_{\ell}^\bullet(K_{a,b})=\chi_c^\bullet(K_{a,b})=b+1$ for such $a$ and $b$, demonstrating that the striking factor $2$ difference between list packing and correspondence packing in that construction disappears in the fractional relaxation.

\begin{proposition}\label{prop:chicbullet_compbip}
 Let $G=(A \cup B, E)$ be a bipartite graph with parts $A$ and $B$ having maximum
degrees $\Delta_A$ and $\Delta_B$, respectively, where $\Delta_A \le \Delta_B$. 
Then $\chi_\ell^\bullet(G) \le \chi_c^\bullet(G) \le \Delta_A+1$.
\end{proposition}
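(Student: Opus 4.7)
I would adapt the inductive proof of the lemma underlying Theorem~\ref{thm:fractionalgreedy} to exploit the bipartite structure, strengthening the inductive hypothesis so that the induction can always peel off a vertex from $B$. The original lemma required $|L(v)| \ge \deg(v)+1$ for every $v$, which fails for $v \in B$ here when $\Delta_A < \Delta_B$, so it does not apply directly.

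The concrete plan is to prove the following bipartite variant of that lemma. Suppose $G = A \cup B$ is bipartite and $(L,H)$ is a correspondence-cover of $G$, which (as in the original) we may assume has all pairwise matchings maximum, such that (a) $|L(w)| \ge \deg_G(w)+1$ for every $w \in A$, and (b) $|L(v)| \ge |L(w)|$ for every $v \in B$ and every $w \in N(v)$. Then there is a distribution on independent sets $I$ of $H$ with $\Pr(x \in I) \ge 1/|L(v)|$ for each $v \in V(G)$ and $x \in L(v)$. The proposition follows immediately upon applying this to the given $(\Delta_A+1)$-fold correspondence cover: all lists have size $\Delta_A + 1$, so (a) holds since $\deg(w) \le \Delta_A$ for $w \in A$, and (b) holds trivially.

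I would prove the claim by induction on $|V(G)|$, following the outline of the original argument. If $G$ has no edges the statement is trivial. Otherwise $B$ is nonempty, and we pick any $v \in B$, sample $x \in L(v)$ uniformly, and recurse on $G - v$ with lists $L'(w) := L(w) \setminus N_H[x]$. Both hypotheses survive the reduction: for $w \in A \cap N(v)$ the list and degree each drop by at most one, preserving (a); and for $v' \in B \setminus \{v\}$, (b) is only made easier because $|L'(w)| \le |L(w)|$ for every $w \in A$ and the lists in $B$ are untouched.

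The marginal computations then carry over verbatim from the lemma for Theorem~\ref{thm:fractionalgreedy}: trivially for $y = x \in L(v)$; by induction for $y$ in a list disjoint from $N_H[x]$; and by a three-case conditioning for $y \in L(u)$ with $u \in N(v) \subseteq A$, splitting on whether the random $x$ is matched to $y$, to another vertex of $L(u)$, or to nothing in $L(u)$. Here hypothesis (b) plays exactly the role that ``$v$ has maximum list size'' played in the original proof: it guarantees that the maximum matching between $L(v)$ and $L(u)$ saturates $L(u)$, giving $|L(v) \setminus N_H(L(u))| = |L(v)| - |L(u)|$, which is the identity on which the computation hinges. The main point requiring care is identifying (b) as the correct bipartite weakening of the original hypothesis; once fixed, both its invariance under the reduction and its sufficiency for the marginal bound are routine to verify.
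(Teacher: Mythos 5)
Your argument is correct, but it is not the route the paper takes. The paper gives a direct, non-inductive construction: choose a uniformly random colour independently for each $b \in B$, then for each $a \in A$ choose uniformly among the colours of $L(a)$ not excluded by those choices (at least one survives since $a$ has at most $\Delta_A = k-1$ neighbours); the marginal $\Pr(x_a \in I) = 1/k$ then follows from a symmetry argument, namely that the cover restricted to $L(a) \cup \bigcup_{b \in N(a)} L(b)$ is isomorphic to the Cartesian product of $K_k$ with a star, so all $k$ elements of $L(a)$ are equally likely to be chosen and exactly one always is. Your inductive adaptation of the lemma behind Theorem~\ref{thm:fractionalgreedy} constructs what is essentially the same random independent set (peeling $B$ one vertex at a time amounts to the same independent uniform choices on $B$), but replaces the global symmetry argument by the local three-case conditional computation, with your hypothesis (b) correctly identified as the substitute for the original requirement that the peeled vertex have a list of maximum size: it is exactly what guarantees that the matching from $L(v)$ saturates $L(u)$, making the case probabilities $\frac{|L(u)|-1}{|L(v)|}$ and $\frac{|L(v)|-|L(u)|}{|L(v)|}$ exact. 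The invariance of (a) and (b) under the reduction checks out, since only $A$-lists ever shrink and they shrink by at most one per lost neighbour. What your version buys is a local-demands generalisation --- lists on $A$ may have varying sizes $\deg(w)+1$ provided each $B$-list dominates the list sizes of its neighbours --- whereas the paper's symmetry argument genuinely needs all lists to have the same size $k$, in exchange for which it delivers the exact marginal $1/k$ in one line.
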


\begin{proof}
Consider a correspondence-cover $(L,H)$ of $G$ such that for all vertices $v$ of $G$, $|L(v)|=\Delta_A+1$.
It is sufficient to prove the statement under the assumption that every matching in the correspondence-cover is a perfect matching.
To bound the fractional chromatic number of $H$, we construct a random (maximum) independent set $I=I_B \cup I_A$ of $H$ as follows.
Let $I_B$ contain for each vertex $b\in B$ a uniform random colour $x_b\in L(b)$, chosen independently.
Having fixed a choice of $I_B$, we now choose $I_A$.
Each vertex $a\in A$ has at most $\Delta_A= k-1$ neighbours in $B$, so at least one colour in $L(a)$ is non-adjacent to $I_B$. 
Then we may choose a uniformly random colour $x_a$ from $L(a)\setminus N(I_b)$, and include it in $I_A$. 

Note that for any $a\in A$, the subgraph of $H$ induced by $L(a)$ and $\bigcup_{b \in N(a)}L(b)$ is isomorphic to the cartesian product of a complete graph $K_k$ and a star with $|N(a)|$ leaves. 
By the symmetry of this graph and how $I_b$ is chosen at random, for each $a\in A$ every $x_a\in L(a)$ is in $I$ with the same probability. 
Since the size of the intersection $|I_A\cap L(a)|$ is always exactly $1$, taking expectations we have $\Pr(x_a\in I)=1/k$ for all $a\in A$ and $x_a\in L(a)$. 
We conclude that each vertex of $H$ is in $I$ with probability exactly $\frac 1k$, so $\chi_f(H) \le k = \Delta_A+1$. 
\end{proof}

We conclude this section by noting that the two fractional packing numbers can be different from both the chromatic and integral packing numbers.

\begin{proof}[Proof of Proposition~\ref{prop:ineqs}] 
 We give examples for each case that the quantities can be different.
 \begin{itemize}
 \item Every even cycle $C_{2n}$ satisfies \[ \chi_\ell(C_{2n})=2<3=\chi_\ell^\bullet(C_{2n})=\chi_\ell^\star(C_{2n}). \]
 The list chromatic number of even cycles has been known since the initial study~\cite{ERT80}.
 By Theorem~\ref{thm:chil*_Delta2}, $\chi_\ell^\star(C_{2n})=3$. Moreover, the 2-fold cover of $C_{2n}$ via the list-assignment given in the proof of Theorem~\ref{thm:chil*_Delta2} contains an odd cycle and hence has fractional chromatic number strictly greater than $2$; this proves $3\leq \chi_\ell^\bullet(C_{2n})$. 
 
 \item The fan $F_7$ (formed by adding a universal vertex to a path on 6 vertices) satisfies 
 \[ \chi_\ell(F_7)=\chi_\ell^\bullet(F_7)=3<4=\chi_\ell^\star(F_7). \]
 Note that $K_3$ is a subgraph of $F_7$ to conclude that $3 \le \chi_\ell(F_7)\le \chi_\ell^\bullet(F_7)$.
 A brute-force verification\footnote{\url{https://github.com/StijnCambie/ListPackII}, document F7.py} shows that $\chi_c^\bullet(F_7)=3$, which gives the upper bound $\chi_\ell^\bullet(F_7)\le 3$.
 
 A list-assignment and verification indicating that $\chi_{\ell}^\star(F_7)\ge 4$ is presented in~\cite[Fig.~11.1, Tab.~11.1]{cambie2022thesis}.

 \item The complete bipartite graph $K_{3,3}$ is an example for which \[ \chi_c(K_{3,3})=3<4=\chi_c^\bullet(K_{3,3})=\chi_c^\star(K_{3,3}). \]

 Note that $3=\chi_c(C_4)\le \chi_c(K_{3,3})\le 3$, where the last inequality is true since at most $3\cdot 3!=18$ out of $27$ possible colourings of one partition class cannot be extended to the other partition class (it also follows from Brooks' theorem for correspondence colouring).
 The inequality $3<\chi_c^\bullet(K_{3,3})$ is proved by computer verification\footnote{\url{https://github.com/StijnCambie/ListPackII}, document K3-3.py} and the upper bound $\chi_c^\star(K_{3,3}) \le 4$ is given in Theorem~\ref{thm:chic*_Delta3}.
 \item Any cycle $C_n$ satisfies \[ \chi_c(C_n)=\chi_c^\bullet(C_n)=3<4=\chi_c^\star(C_n). \]
 The equality $\chi_c(C_n)=3$ was observed by Dvo\v{r}\'{a}k and Postle in~\cite{DvPo18}.
 The equality $\chi_c^\bullet(C_n)=3$ follows from Theorem~\ref{thm:fractionalgreedy}.
 The equality $\chi_c^\star(C_n)=4$ is from Theorem~\ref{thm:chic*_Delta2}. \qedhere
 \end{itemize} 
\end{proof}

\section{Concluding remarks}\label{sec:concrem}

A main objective in this paper was to more closely analyse the list packing number in fundamental settings, as a way to gain more intuition into the List Packing Conjecture; this led us naturally to the proposal of Conjecture~\ref{conj:chil*Delta}. We put some evidence towards Conjecture~\ref{conj:chil*Delta} first by confirming it for graphs with small maximum degree. 
Restricted to complete graphs, Conjecture~\ref{conj:chic*Delta}\ref{itm:chic*Delta} coincides with Conjecture 1.1 in~\cite{Yus21}, which remains generally open but was previously verified for up to $4$ vertices. 
Here we confirmed it for the complete graph on $5$ vertices via the more general result that $\chi_c^{\star}(G) \leq 2\Delta-2$ for any graph $G$ with maximum degree $\Delta \geq 4$.

We also proved an approximate version of Conjecture~\ref{conj:chil*Delta} via Theorem~\ref{thm:fractionalgreedy} and the introduction of fractional versions of the list and correspondence packing numbers. 
More generally in combinatorics, fractional packing often serves as a critical component in proving an asymptotically matching bound for the respective integral packing problem. Here though, in the context of correspondence packing, we noticed (see the remarks above and below Proposition~\ref{prop:chicbullet_compbip}) that the fractional and integral value actually can differ by a factor $2$, an intriguing barrier to this approach.

We contend that the determination of $\chi_c^\bullet(G) $ may be an interesting problem in its own right. Just as for the original, integral form of list packing, several problems come to mind, especially the fractional versions of our main conjectures from~\cite{CCDK21}, which we made explicit above in Conjecture~\ref{conj:frac}.
A resolution to such fractional questions could yield interesting insights into the List Packing Conjecture.
An especially appealing problem is to determine an optimal upper bound on the fractional list packing number for planar graphs.

\begin{conjecture}\label{conj:fracplanar}
$\chi_\ell^\bullet(G)\le 5$ for any planar graph $G$.
\end{conjecture}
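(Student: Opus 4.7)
The plan is to attempt Conjecture~\ref{conj:fracplanar} via induction on $|V(G)|$, modelled on Thomassen's proof that planar graphs are $5$-list-colourable. The bound $5$ would be best possible, since Voigt exhibited planar graphs with $\chi_\ell = 5$ and $\chi_\ell^\bullet \ge \chi_\ell$. Note that for planar $G$ with $\Delta(G) \le 4$, the conjecture is already implied by Theorem~\ref{thm:fractionalgreedy}, so the real challenge is planar graphs of larger maximum degree.

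First, I would formulate a strengthened inductive hypothesis along the following lines: let $G$ be a near-triangulation with outer face bounded by a cycle $C$, let $u,v \in C$ be adjacent, and let $L$ be a list-assignment with $|L(w)| = 5$ for interior $w$ and $|L(w)| \ge 3$ for $w \in C \setminus \{u,v\}$. For any ``admissible'' joint distribution $\pi$ on pairs $(x,y) \in L(u) \times L(v)$ with $x \ne y$, there should exist a random proper $L$-colouring of $G$ whose $(u,v)$-restriction is $\pi$ and whose marginal at every other vertex $w$ and every $x \in L(w)$ is uniform, i.e.\ $1/|L(w)|$. Taking $\pi$ uniform on valid pairs and embedding an arbitrary planar graph into such a near-triangulation would then yield the conjecture via Proposition~\ref{prop:fracfacts}.

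Second, I would follow the Thomassen reduction dichotomy: either $C$ has a chord, in which case $G$ splits into two smaller near-triangulations that are each handled by induction and then glued via an appropriate product/conditional distribution on the chord endpoints; or one identifies a boundary vertex of controlled low degree whose removal, together with a careful trimming of one or two colours from its interior neighbours' lists, yields a strictly smaller instance satisfying the strengthened hypothesis. The base cases (triangles and small near-triangulations) can be handled directly.

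The hard part, which I expect to be the main obstacle, is the gluing step. In the integer argument one merely picks a single compatible colour at the removed vertex; in the fractional setting one must simultaneously arrange for the marginal at that vertex to be uniform over its $5$-list, while preserving the prescribed marginals at all of its neighbours and not disturbing the rest of the packing. I would encode the extension as a linear programming feasibility problem whose variables are probabilities assigned to the possible extensions, and try to establish feasibility via a fractional Hall- or max-flow-type argument that exploits the slack between the list sizes ($5$ interior, $3$ boundary) and the degrees demanded by the integer reduction ($4$ interior, $2$ boundary). Whether this one unit of slack per vertex is genuinely enough to sustain the induction, or whether a subtler asymmetric strengthening of the hypothesis (for example, allowing non-uniform admissible marginals at boundary vertices) is required to propagate the argument without impasse, is precisely the gap I am unable to resolve in this sketch.
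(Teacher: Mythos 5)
This statement is Conjecture~\ref{conj:fracplanar}, which the paper explicitly leaves open: there is no proof in the paper to compare against, and your proposal is likewise not a proof but a programme with a gap that you yourself flag. What the paper does establish for planar $G$ is only $\chi_\ell^\bullet(G)\le\chi_c^\star(G)\le 2\delta^\star(G)\le 10$ via Theorem~\ref{thm:degen}, and $\chi_\ell^\bullet(G)\le\Delta(G)+1$ via Theorem~\ref{thm:fractionalgreedy}; your observation that $5$ would be sharp (Voigt plus $\chi_\ell\le\chi_\ell^\bullet$) is correct.

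The gap you name is real, and the paper's own examples indicate why the one unit of slack in Thomassen's scheme is unlikely to suffice as stated. First, your strengthened hypothesis already fails at the boundary of admissibility: if $u$ carries a near-deterministic marginal on a colour $x$ lying in the $3$-list of its cycle-neighbour $w$, then $\Pr(c(w)=x)$ is forced below $1/3$, so ``admissible'' must encode quantitative upper bounds on $\pi$'s marginals relative to every neighbour's list --- and, for the vertex adjacent to both $u$ and $v$, on joint events. Second, and more seriously, the chord case does not reduce to ``every pair $(x,y)$ on the chord extends'': after fractionally colouring one side you obtain some correlated joint law on the chord endpoints with uniform marginals, and you must realise \emph{that particular} joint law as the chord-restriction of a fractional colouring of the other side while keeping all other marginals exactly uniform. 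Mixing, over $\pi$, one arbitrary extension per pair (the integer argument) destroys uniformity elsewhere, so the induction must be closed under an essentially arbitrary correlation structure on two vertices --- a far stronger demand than the hypothesis delivers. Third, the deletion case requires the removed boundary vertex to achieve marginal exactly $1/3$ on each of its three colours, not merely to have one colour free; the paper's Section~\ref{sec:brooks} shows that precisely this kind of exact fractional demand fails for $K_4^-$ with degree-sized lists and for the $K_4^-$-necklace, i.e.\ in the list-size-close-to-degree regime that Thomassen's trimming creates around the deleted vertex. So the approach is a reasonable starting point, but the inductive statement would need a substantially stronger (and as yet unknown) formulation of admissibility to survive both the gluing and the deletion steps, and the conjecture should be regarded as open.
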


While we have shown examples of graphs $G$ for which $\chi_c^\star(G)\sim 2\chi_c(G)$~\cite[Prop.~24]{CCDK21}, we actually do not know any graph for which the list packing number is two larger than the list chromatic number.
As such, the following is a natural challenge.

\begin{problem}
Find examples of graphs $G$ for which $\chi_{\ell}^\star(G)>\chi_{\ell}(G)+1$.
\end{problem}

In the following three subsections, we give some remarks related to some interesting further directions one could take to understand the list packing number better.

\subsection{Many list-colourings but no list-packing, even for line graphs}\label{subsec:linegraphs}

By a theorem of Hall~\cite{Hall48}, for a $k$-list-assignment $L$ of $K_k$, there are at least $k!$ proper $L$-colourings of $K_k$.
Because there are so many $L$-colourings of $K_k$, the fact that there exists a packing of $k$ \emph{disjoint} $L$-colourings might not seem especially surprising.
Nevertheless, a packing of colourings does not necessarily follow from a large number of colourings.

Consider the Latin square $K_n \binsq K_n$ as the graph with the cells of a $n \times n$ grid as its vertices, where two vertices are adjacent if they are in the same row or column.
Denote the $n^2$ vertices of $K_n \binsq K_n$ by pairs in $[n]^2$, where $[n]=\{1,2,\ldots, n\}$.
Let the list-assignment $L$ of $K_n \binsq K_n$ be given by $L((1,i))=[n+1]\setminus \{1\}$ for every $i \in [n]$,
$L((j,1))=[n+1]\setminus \{2\}$ for every $2 \le j \le n$
and $L((j,i))=[n]$ for every $2 \le i,j \le n$.
This assignment for $K_4 \binsq K_4$ is presented in Table~\ref{tab:L(K_4,4)}, while an example for $n=3$ and the general case is also presented in~\cite[Fig.~4.9, Fig~5.1]{Levit18}.

\begin{table}[ht]
\centering
\begin{tabular}{|c|c|c|c|}
 \hline
 $[5]\setminus 1$&$[5]\setminus 2$&$[5]\setminus 2$&$[5]\setminus 2$\\
 \hline
 $[5]\setminus 1$&$[4]$&$[4]$&$[4]$\\
 \hline
 $[5]\setminus 1$&$[4]$&$[4]$&$[4]$\\
 \hline
 $[5]\setminus 1$&$[4]$&$[4]$&$[4]$\\
 \hline
\end{tabular}
\caption{Lists on $K_4 \binsq K_4$ without a packing of colourings.\label{tab:L(K_4,4)}}
\end{table}

Extending an observation by Levit~\cite[Lem.~42]{Levit18}, we prove the following.

\begin{proposition}\label{prop:Dinitzpacking}
 There are $n^{n^2(1-o(1))}$ $L$-colourings of $K_n \binsq K_n$ for the $n$-list-assignment $L$ from above.
 Nevertheless, for every $n\ge 2$, $K_n \binsq K_n$ is not fractionally $L$-packable and thus $\chi_\ell^\star(K_n \binsq K_n) \ge \chi_\ell^\bullet(K_n \binsq K_n)>n$.
\end{proposition}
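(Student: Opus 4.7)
The plan is in two parts: a lower bound on the number of proper $L$-colourings, and a structural argument that no fractional $L$-packing exists. The packing-number inequalities $\chi_\ell^\star(K_n\binsq K_n)\ge \chi_\ell^\bullet(K_n\binsq K_n)>n$ will both follow from the latter, since $n$ is the common list size.

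For the counting, I would restrict attention to $L$-colourings that use only colours in $[n]$. Such a colouring is a Latin square of order $n$ satisfying $c(1,1)\ne 1$ and $c(j,1)\ne 2$ for each $j\ge 2$. Since column $1$ of a Latin square is a permutation of $[n]$ it must contain the value $2$ somewhere, and this can only be placed at $(1,1)$; hence any such Latin square in fact has $c(1,1)=2$. By symmetry on the symbols, the number of Latin squares of order $n$ with any fixed entry equals $L_n/n$, where $L_n$ denotes the total count. Invoking the classical asymptotic $L_n=n^{n^2(1-o(1))}$ (a consequence of the van der Waerden--Egorychev--Falikman permanent inequality) then yields the claimed lower bound; the trivial upper bound $n^{n^2}$ matches.

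For the non-packability, suppose towards a contradiction that a fractional $L$-packing of weight $n$ exists. By Proposition~\ref{prop:fracfacts} it corresponds to a probability distribution on proper $L$-colourings with $\Pr(c(v)=x)=1/n$ for every vertex $v$ and every $x\in L(v)$. The key step is an expectation argument that pins down the positions of colour~$1$ in every colouring in the support. Let $N_1$ count the uses of colour~$1$. The cells whose list contains $1$ are exactly those outside row $1$, totalling $n(n-1)$, so $\EE[N_1]=n-1$. On the other hand, in any proper colouring the cells coloured $1$ form an independent set of $K_n\binsq K_n$ lying outside row~$1$, hence there are at most $n-1$ of them. Therefore $N_1=n-1$ almost surely, and so in every colouring in the support colour~$1$ occupies a partial permutation matrix on rows $\{2,\dotsc,n\}$ covering all but exactly one column $c^\ast\in[n]$.

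The final step is to read off the top entry of the missing column and derive a contradiction. In column $c^\ast$, the $n-1$ entries in rows $\{2,\dotsc,n\}$ are distinct and lie in their common list minus $\{1\}$, a set of size $n-1$, so they realise a bijection onto that set. If $c^\ast=1$, this set is $\{3,\dotsc,n+1\}$, forcing $c(1,1)=2$ as the unique element of $L((1,1))=\{2,\dotsc,n+1\}$ not yet used in the column; if $c^\ast\ge 2$, the set is $\{2,\dotsc,n\}$, forcing $c(1,c^\ast)=n+1$, which in turn implies $c(1,1)\ne n+1$ because row~$1$ cannot contain $n+1$ twice. In either case $c(1,1)\ne n+1$ almost surely, contradicting $\Pr(c(1,1)=n+1)=1/n>0$. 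The main obstacle is recognising that the $\EE[N_1]$ computation forces the $1$-pattern in every colouring in the support of the packing; once that is in hand, the forced values in column $c^\ast$ reduce the rest of the argument to an easy case check.
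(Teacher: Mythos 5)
Your non-packability argument is correct, and it takes a genuinely different route from the paper's. The paper shows outright that \emph{no} proper $L$-colouring assigns colour $n+1$ to the corner cell: if it did, the remaining cells of the two lines through the corner would between them use every colour of $[n]$ at least once, so each colour of $[n]$ could occur at most $n-2$ times on the inner $(n-1)\times(n-1)$ block, and $(n-1)^2>n(n-2)$ gives a contradiction; Proposition~\ref{prop:fracfacts} then finishes. You instead run an averaging argument on colour $1$: since exactly $n(n-1)$ lists contain $1$, a weight-$n$ fractional packing forces $\EE[N_1]=n-1$, while $N_1\le n-1$ pointwise, so every colouring in the support realises colour $1$ as an $(n-1)$-cell partial permutation matrix missing one column $c^\ast$, and your case analysis on $c^\ast$ correctly forces $c(1,1)\ne n+1$ on the support, contradicting $\Pr(c(1,1)=n+1)=1/n$. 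Both arguments are sound; the paper's is more local (a single non-extendable vertex--colour pair, no probability needed until the last step), while yours extracts structural information from the distribution itself. Either suffices for $\chi_\ell^\bullet(K_n\binsq K_n)>n$.

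The counting half, however, has a genuine error. You restrict to $L$-colourings that use only colours in $[n]$ and claim these are the Latin squares with $c(1,1)\ne 1$ and $c(j,1)\ne 2$ for $j\ge 2$. But the list-assignment puts $L((1,i))=[n+1]\setminus\{1\}$ on \emph{every} cell of the line $\{(1,i):i\in[n]\}$, not just on the corner --- a constraint you yourself use correctly later when you say that exactly $n(n-1)$ cells have $1$ in their list. A proper $n$-colouring with colours $[n]$ is a Latin square and must use colour $1$ somewhere on that full line, so the set you are counting is empty: every proper $L$-colouring is forced to use colour $n+1$ at least once. Consequently your lower bound is not established. The paper's fix is exactly to bring $n+1$ into play: start from any Latin square with the corner not coloured $1$, recolour with $n+1$ the cell carrying colour $1$ on the line that forbids $1$, and likewise the cell carrying colour $2$ on the other line (when it is not the corner); this is an injection from a $\tfrac{n-1}{n}$ fraction of all Latin squares into the proper $L$-colourings, which yields the claimed $n^{n^2(1-o(1))}$ bound.
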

 
\begin{proof}
 We first prove that there are at least $\frac{n-1}{n} N(n)$ many proper $L$-colourings of $K_n \binsq K_n$,
 where $N(n)=n^{n^2(1-o(1))}$ denotes the number of Latin squares of order $n$ (see~\cite[Thm.~17.2]{vLW}).  
 A Latin square corresponds to a proper $n$-colouring of $K_n \binsq K_n$. 
 Take any such proper $n$-colouring for which $(1,1)$ has not been coloured with $1$.
 The vertex coloured by $1$ in the first column and the vertex coloured by $2$ in the first row (if it is not equal to $(1,1)$) are recoloured with $n+1$.
 Then by definition, we have a proper $L$-colouring of $K_n \binsq K_n$ and the lower bound follows because this recolouring gives an injection from the set of Latin squares which do not have a $1$ in the top-left cell to the collection of proper $L$-colourings of $K_n \binsq K_n$.

 Next, we prove that $K_n \binsq K_n$ is not fractionally $L$-packable (i.e.\ the associated cover has fractional chromatic number strictly larger than $n$). 
 It is enough (by Proposition~\ref{prop:fracfacts}) to show that there does not exist a proper $L$-colouring that colours $(1,1)$ with $n+1$. Note that if such a colouring were to exist, the other vertices of the first column would use every colour in $[n]\setminus \{1\}$, and the other vertices of the first row would use every colour in $[n]\setminus \{2\}$.
 As such, every colour in $[n]$ can be used at most $n-2$ times on the vertex subset $([n]\setminus 1)\times ([n]\setminus 1)$.
 Since $(n-1)^2>n(n-2)$, this implies that the colouring cannot be extended.
\end{proof}

So even while there are about as many proper $L$-colourings of the graph as one can hope for, the graph is not fractionally $L$-packable.

Note that $K_n \binsq K_n$ is the line graph of the complete bipartite graph $K_{n,n}$. It is immediate that $K_n \binsq K_n$ has chromatic number $n$. As proposed by Dinitz and proved by Galvin (see~\cite{Gal95}), the list chromatic number of this graph equals $n$ as well. However, Proposition~\ref{prop:Dinitzpacking} indicates that the packing version of Dinitz' problem behaves differently from the colouring version, as the list packing number of $K_n \binsq K_n$ exceeds $n$.
We ask a question that arises from this observation.

\begin{conjecture}[Packing version of Dinitz's problem]
 For $n \ge 3$, $\chi_\ell^\star(K_n \binsq K_n)=n+1$.
\end{conjecture}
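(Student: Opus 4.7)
The lower bound $\chi_\ell^\star(K_n \binsq K_n) \ge n+1$ is already established by Proposition~\ref{prop:Dinitzpacking}, so the task is the matching upper bound: for any $(n+1)$-list-assignment $L$ of $K_n \binsq K_n$, construct an $L$-packing of size $n+1$. My plan splits into the common-list case and the general list-assignment case.

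For the common-list case $L(v) \equiv [n+1]$, I would identify $[n+1]$ with $\mathbb{Z}/(n+1)\mathbb{Z}$ and define, for each $k \in [n+1]$, the colouring $c_k(i,j) := i + j + k \pmod{n+1}$. Each $c_k$ is proper: for fixed $i$ and $k$, the map $j \mapsto i+j+k$ attains $n$ distinct residues on $j \in [n]$, and symmetrically for fixed $j$ and varying $i$. For each fixed cell $(i,j)$, the map $k \mapsto c_k(i,j)$ is a bijection from $[n+1]$ onto $\mathbb{Z}/(n+1)\mathbb{Z}$, so the $n+1$ colourings together form a valid $L$-packing.

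For the general case, I would try to adapt Galvin's kernel-method proof of the Dinitz conjecture, which gives $\chi_\ell(K_n \binsq K_n)=n$ by orienting the line graph of $K_{n,n}$ via a Latin square so that every induced subgraph has a kernel. A natural attempt is to build $n+1$ orientations $\vec{D}_1,\ldots,\vec{D}_{n+1}$, say based on cyclic shifts of a canonical Latin square, and to invoke Galvin's kernel lemma in sequence along each $\vec{D}_k$---each time on the residual cover after removing previously chosen colourings---to produce $c_1,\ldots,c_{n+1}$ that are pairwise disjoint. A parallel strategy is the Combinatorial Nullstellensatz: encode the packing as the non-vanishing of a polynomial in $n^2(n+1)$ variables and try to evaluate its leading coefficient via an Alon--Tarsi-style product formula, extending the graph-polynomial calculation that underlies known proofs of $\chi_\ell(K_n \binsq K_n) = n$.

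The main obstacle in both approaches is enforcing disjointness across the $n+1$ colourings. A naive iterative application of Galvin's theorem fails already at the second step because the residual lists drop below the Galvin threshold of $n$; and the Nullstellensatz coefficient for a packing polynomial seems hard to evaluate in closed form, as the packing constraints couple the $n+1$ colourings strongly and the clean product structure underlying Alon--Tarsi may not survive. As a first step I would verify the conjecture for $n=3$ by computer search, as done for several small-case results elsewhere in this paper, and look for a recurring structural feature that could guide the general argument.
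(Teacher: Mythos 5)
The statement you are trying to prove is posed in the paper as an open conjecture, not a theorem: the paper establishes only the lower bound $\chi_\ell^\star(K_n \binsq K_n) \ge n+1$ (via Proposition~\ref{prop:Dinitzpacking}, exactly as you cite) and offers no proof of the upper bound. Your treatment of the identical-lists case is correct --- the translates $c_k(i,j)=i+j+k \pmod{n+1}$ of a proper $(n+1)$-colouring do form a packing --- but this is the easy case (the same observation is used in the proof of Theorem~\ref{thm:chil*_Delta2}), and it gives no leverage on general $(n+1)$-list-assignments. Everything difficult lives in the general case, and there your proposal is an honest survey of obstacles rather than an argument: as you yourself note, iterating Galvin's kernel method breaks at the second colouring because the residual lists fall below the kernel threshold, and no Alon--Tarsi-style coefficient computation for the packing polynomial is supplied. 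So there is a genuine gap, namely the entire upper bound for non-uniform lists.

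For comparison, the route the paper suggests is different from both of your sketches: it proposes proving $\chi_\ell(K_n \binsq K_n \binsq K_{n+1})=n+1$, using the general implication that $\chi_\ell(G \binsq K_m)=m$ forces $\chi_\ell^\star(G)\le m$ (assign the same list to the $m$ copies of each vertex; a proper list-colouring of the product then decomposes into $m$ disjoint list-colourings of $G$). This reduction converts the packing problem into a single list-colouring problem on a larger Cartesian product, where tools such as Galvin's theorem or Alon--Tarsi might apply once rather than iteratively --- which is precisely the coupling difficulty your two approaches run into. If you pursue this, the $n=3$ computer verification you propose is a sensible first step, but be aware that even the reduced single-colouring statement is not known.
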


\noindent
A possible approach, which is similar to that used in~\cite{Mudrock22}, is to prove that 
\[ \chi_\ell(K_n \binsq K_n \binsq K_{n+1})=n+1.\]
This would imply the result due to the observation that $\chi_\ell(G \binsq K_m)=m$ implies that $\chi_\ell^\star(G) \le m.$ The latter implication is immediate by choosing the same list for the $m$ copies of a particular vertex.

\subsection{Variants of Brooks' theorem}\label{sec:brooks}

A natural Brooks'-type theorem for list packing is false. 
The diamond $K_4^-$ is formed by removing an edge from the complete graph $K_4$.
The $K_4^-$-necklace $G$, consisting of two $K_4^-$s whose degree $2$ vertices are pairwise connected, is a $3$-regular graph (that is not $K_4$) for which $\chi_{\ell}^\bullet(G)=\chi_\ell^\star(G)=4$. 
See Figure~\ref{fig:necklace} for a $3$-list-assignment $L$ that does not admit a fractional $L$-packing. This can easily be verified by a computer\footnote{\url{https://github.com/StijnCambie/ListpackII}, document K4-Necklace.py}, though manual verification is feasible. 
An interesting feature of this example is that an $L$-colouring extending any single mapping $c(v) = x$ for $x\in L(v)$ exists.

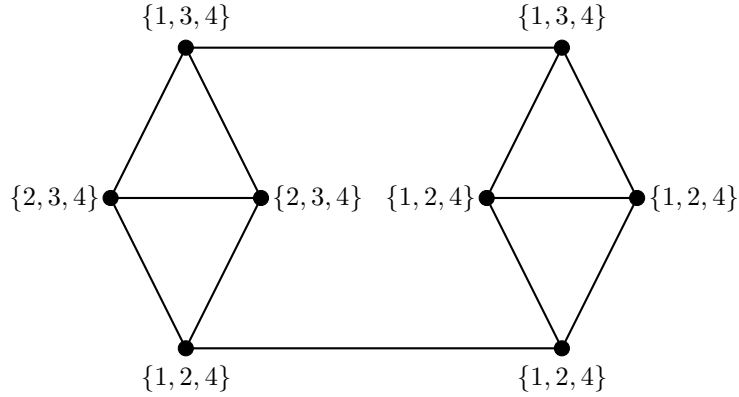
\begin{figure}[ht]
	\centering
	\begin{tikzpicture}

 \foreach \x/\y in {0/0,0/4,-1/2,1/2,5/0,5/4,4/2,6/2} {
 \draw[fill] (\x,\y) circle (0.1);
 }
 
 \draw [thick] (0,0) -- (-1,2)--(0,4)--(1,2)--(0,0)--(5,0)--(4,2)--(5,4)--(6,2)--(5,0);
 \draw [thick] (-1,2)--(1,2);
 \draw [thick] (4,2)--(6,2);
 \draw [thick] (0,4)--(5,4);
 
 \draw (0,4.4) node {$\{1,3,4\}$};
 \draw (0,-0.4) node {$\{1,2,4\}$}; 
 \draw (1.75,2) node {$\{2,3,4\}$}; 
 \draw (-1.75,2) node {$\{2,3,4\}$}; 

 \draw (5,4.4) node {$\{1,3,4\}$};
 \draw (5,-0.4) node {$\{1,2,4\}$}; 
 \draw (3.25,2) node {$\{1,2,4\}$}; 
 \draw (6.75,2) node {$\{1,2,4\}$}; 
 \end{tikzpicture}
 \caption{The $K_4^-$-necklace and a list-assignment for which no (fractional) list-packing is possible.\label{fig:necklace}}
\end{figure}

While we rule out the statement $\chi_\ell^\bullet(G)\le\max\{3,\omega(G),\Delta(G)\}$, which seems an appealing formulation because cycles have list packing number $3$, it is plausible that a Brooks'-type theorem holds with a more esoteric set of exceptional cases. For example, we have not ruled out a bound of the form $\chi_\ell^\bullet(G)\le\max\{4,\omega(G),\Delta(G)\}$, or that there exists an easily-describable set of graphs $\mathcal G$ (including cycles and the $K_4^-$-necklace) such that for connected $G\notin \mathcal G$ we have $\chi_\ell^\star(G)\le\max\{\omega(G),\Delta(G)\}$.
We suggest the following question.

\begin{problem}
 Characterise the graphs of maximum degree $3$ with list packing number $4$.
\end{problem}

For correspondence packing, analogues of Brooks' theorem and Reed's conjecture need to be modified markedly.
We remark\footnote{\url{https://github.com/StijnCambie/ListpackII}, document Petersen.py} that the Petersen graph $P_{5,2}$ satisfies $\chi_c^\star(P_{5,2})\ge 4$, while it is triangle-free and has maximum degree $3$.
The even degree case of Conjecture~\ref{conj:chic*Delta} is the upper bound $\chi_c^\star(G)\le \Delta+2$, and we ask whether $K_5$ the only tight example for $\Delta(G)=4$.

\begin{problem}
 Characterise the graphs of maximum degree $4$ with correspondence packing number $6$.
\end{problem}

Seeking a packing of list-colourings requires a list-assignment with uniform list sizes, but the fractional variant of list packing naturally generalises to list-assignments with arbitrary list sizes. 
For a list-assignment $L$ of $G$, we can ask for a probability distribution on independent sets $I$ of the associated list-cover $(\tilde L,H)$, where $\tilde L(v) = \{i_v : i\in L(v)\}$, such that for each $v\in V(G)$ and $i\in L(v)$, $\Pr(i_v\in I) \ge 1/|L(v)|$ (see~\cite{KP18} for the general theory of fractional colouring with local demands). 
With this, it makes sense to study fractional degree-list-packability as a more structured version of degree-choosability.
A graph $G$ is degree-choosable if, for any list-assignment $L$ such that $|L(v)|=\deg(v)$, $G$ admits an $L$-colouring. 
Erd\H{o}s, Rubin and Taylor~\cite{ERT80}, and independently Borodin~\cite{Bor77} classified the degree-choosable graphs as those which are not \emph{Gallai trees}. 
Here, we note that fractional degree-list-packability can be defined as above for list-assignments with $|L(v)|=\deg(v)$, but point out that the proof for degree-choosability does not extend to this notion because of the $K_4^-$-necklace. 
It is not too hard to come up with irregular examples too, such as the graph $K_4^-$ itself with lists $\{1,2\}$, $\{1,3\}$, $\{1,2,3\}$, $\{1,2,3\}$, and $K_5^-$ with lists $\{1,2,3\}$, $\{1,2,4\}$, and three copies of $\{1,2,3,4\}$. 
We give a computer verification of the former\footnote{\url{https://github.com/StijnCambie/ListpackII}, document K4-Necklace.py} that is easily adapted to give the latter.

\begin{problem}
 Characterise the graphs that are fractionally degree-list-packable.
\end{problem}

\subsection{Algorithms and complexity}

At the heart of many combinatorial problems sits some inherently difficult algorithmic tasks (and {\em vice versa}). The list packing problem is no exception. Furthermore, many intuitively algorithmic tactics we could successfully employ for the corresponding graph colouring problems become blunted in the hunt for list-packings. Most especially, local modifications of the choice applied at one particular vertex or in its neighbourhood become harder to reason about.
We therefore believe that the algorithmic aspects of list packing are a promising research line, and here we make some basic comments based on our results. Further study would be interesting; in particular, the nature of the list packing number makes it natural to explore various classes of graphs, as is common in algorithmic graph theory.

The decision problem associated to list colouring is `graph $k$-list colouring', where an instance is a graph $G$ and we must decide whether $\chi_\ell(G)\le k$. 
We can define an analogous problem `graph $k$-list packing', and relate its complexity to the list colouring problem. 
It is well-known that for $k\ge 3$, `graph $k$-list colouring' is complete for the complexity class $\Pi_2^\mathsf{p}=\mathsf{coNP}^{\mathsf{NP}}$ (in the second level of the polynomial hierarchy) of problems for which a Turing machine with access to an oracle for $\mathsf{NP}$ can verify certificates for `no' instances in polynomial time~\cite{ERT80,GT09}. 
By the classification of languages in $\Pi_2^\mathsf{p}$ according to a description in terms of quantified Boolean formulae, i.e.\ $L\in\Pi_2^\mathsf{p}$ if and only if there is a constant $c$ and language $R\in \mathsf{P}$ such that 
\[ L = \{x : \forall y \exists z : |y|,|z| \le |x|^c,\text{ and }(x,y,z) \in R\}, \]
it is easy to see that graph $k$-list packing lies in the same complexity class $\Pi_2^\mathsf{p}$. 
That is, although combinatorially it seems harder to find a packing than a single list-colouring, there is no difference in computational complexity (at this resolution).
We raise the question of completeness, and the fact that $\Pi_2^\mathsf{p}$-completeness for the closely related `strong $k$-colouring' decision problem is open~\cite{SU02}.

\begin{question}
 Is there some $k_0$ such that for all $k\ge k_0$, graph $k$-list packing is complete for the complexity class $\Pi_2^\mathsf{p}$? Is $k_0=3$?
\end{question}

\noindent
The classification of graphs of list chromatic number $2$ in~\cite{ERT80} shows that graph $2$-list colouring is in $\mathsf{P}$. Similarly, Theorem~\ref{thm:2packable} shows that graph $2$-list packing is in $\mathsf{P}$. 
In much the same way, the fact that for all $k$, graph $k$-list packing restricted to instances of maximum degree $2$ is in $\mathsf{P}$ follows from Theorems~\ref{thm:2packable} and~\ref{thm:cycles}.
The correspondence version of the above question is also natural.

Algorithms that construct in polynomial time list-colourings, list-packings, and more general objects such as independent transversals and strong colourings, have been studied for some time (e.g.~\cite{GHH21,GH20a,Har16}). 
We first observe that many of our results give linear-time constructions.

\begin{remark}
 The proofs in Sections~\ref{sec:paths+cycles},~\ref{sec:subcubic} and~\ref{sec:maxdegree} give rise to constructions of the desired packings with algorithms that run in linear time (as a function of the order $n$ of $G$ and supposing that the maximum degree is fixed).
 Note that Claim~\ref{clm:edgenotinK3} actually implies that there is an edge not belonging to a triangle near every vertex of a $3$-regular graph that is not $K_4$.
 Knowing that there do exist linear time algorithms~\cite{MB83} (again, assuming that the maximum degree is fixed) to derive the degeneracy ordering of $G \setminus v$ and also finding the greedy packing happens in linear time, we conclude easily. 
 Completing the packing is something that happens locally.
\end{remark}

One of the most general algorithmic results, due to Graf and Haxell~\cite[Cor.~26]{GH20a}, can be used to construct list- and correspondence-packings of graphs of maximum degree $\Delta$ and when lists are of size at least $3\Delta+1$ (see~\cite{ABZ07} for the key method that gives a non-constructive version of this result). 
Their result actually applies to strong colouring, which is considerably more general than correspondence packing, see e.g.~\cite{CCDK21}. 
Here, it suffices to note that finding a $k$-colouring of a $k$-fold correspondence-cover $(L,H)$ of a graph $G$ of maximum degree $\Delta$ is a special case of finding a strong $k$-colouring of a graph $H^\star$ of maximum degree $\Delta$. This is by removing the cliques on the lists in $H$ and finding a strong colouring with respect to the partition induced by $L$.
We showed before~\cite[Thms.~3 and~10]{CCDK21} that the method of~\cite{ABZ07} applied in the context of list and correspondence packing gives the bounds 
\begin{align*}
\chi_\ell^\star(G) &\le 1+\Delta(G)+\chi_\ell(G),&
\chi_c^\star(G) &\le 1+\Delta(G)+\chi_c(G),&
\end{align*}
though we did not give constructive proofs of these bounds. 
Somewhat interestingly, we note that Theorem~\ref{thm:degen} gives a polynomial-time construction for list- and correspondence-packings with lists of size $2\Delta$.
That is, we significantly reduce the required lower bound on the size of the partition classes (equivalent to list size) in one of the results of~\cite{GH20a}, at the considerable cost of requiring that the graph we colour is a cover of some bounded-degree graph.

 {\small
\paragraph{Open access.} For the purpose of open access, a CC BY public copyright licence is applied to any Author Accepted Manuscript (AAM) arising from this submission.}

\bibliographystyle{habbrv}
\bibliography{listpack}

\begin{thebibliography}{10}
\expandafter\ifx\csname url\endcsname\relax
  \def\url#1{\texttt{#1}}\fi
\expandafter\ifx\csname doi\endcsname\relax
  \def\doi#1{\burlalt{\textsc{doi}:#1}{https://dx.doi.org/#1}}\fi
\expandafter\ifx\csname
  urlprefix\endcsname\relax\def\urlprefix{\textsc{url:}}\fi
\expandafter\ifx\csname href\endcsname\relax
  \def\href#1#2{#2}\fi
\expandafter\ifx\csname burlalt\endcsname\relax
  \def\burlalt#1#2{\href{#2}{#1}}\fi

\bibitem{ABZ07}
R.~Aharoni, E.~Berger, and R.~Ziv.
\newblock Independent systems of representatives in weighted graphs.
\newblock {\em Combinatorica}, 27(3):253--267, 2007.
\newblock \doi{10.1007/s00493-007-2086-y}.

\bibitem{AFH96}
N.~Alon, M.~R. Fellows, and D.~R. Hare.
\newblock Vertex transversals that dominate.
\newblock {\em J. Graph Theory}, 21(1):21--31, 1996.
\newblock \doi{10.1002/(SICI)1097-0118(199609)23:1<21::AID-JGT2>3.0.CO;2-M}.

\bibitem{Bor77}
O.~V. Borodin.
\newblock Criterion of chromaticity of a degree prescription.
\newblock In {\em IV All-Union Conf. on Theoretical Cybernetics (Novosibirsk)},
  pages 127--128, 1977.

\bibitem{cambie2022thesis}
S.~Cambie.
\newblock {\em Extremal aspects of distances and colourings in graphs}.
\newblock PhD thesis, Radboud University, 2022.

\bibitem{CCDK21}
S.~{Cambie}, W.~{Cames van Batenburg}, E.~{Davies}, and R.~J. {Kang}.
\newblock {Packing list-colourings}.
\newblock {\em RS\&A}, to appear. arXiv: 2110.05230,
  \burlalt{arXiv:2110.05230}{https://arxiv.org/abs/2110.05230}.

\bibitem{CH23}
S.~Cambie and R.~H\"am\"al\"ainen.
\newblock Packing colourings in complete bipartite graphs and the inverse
  problem for correspondence packing.
\newblock {\em to appear}.

\bibitem{Cat80}
P.~A. Catlin.
\newblock On the {H}ajnal-{S}zemer\'{e}di theorem on disjoint cliques.
\newblock {\em Utilitas Math.}, 17:163--177, 1980.

\bibitem{DvPo18}
Z.~Dvo\v{r}\'{a}k and L.~Postle.
\newblock Correspondence coloring and its application to list-coloring planar
  graphs without cycles of lengths 4 to 8.
\newblock {\em J. Combin. Theory Ser. B}, 129:38--54, 2018.
\newblock \doi{10.1016/j.jctb.2017.09.001}.

\bibitem{ERT80}
P.~Erd\H{o}s, A.~L. Rubin, and H.~Taylor.
\newblock Choosability in graphs.
\newblock In {\em Proceedings of the {W}est {C}oast {C}onference on
  {C}ombinatorics, {G}raph {T}heory and {C}omputing ({H}umboldt {S}tate
  {U}niv., {A}rcata, {C}alif., 1979)}, Congress. Numer., XXVI, pages 125--157.
  Utilitas Math., Winnipeg, Man., 1980.

\bibitem{Gal95}
F.~Galvin.
\newblock The {{List Chromatic Index}} of a {{Bipartite Multigraph}}.
\newblock {\em Journal of Combinatorial Theory, Series B}, 63(1):153--158, Jan.
  1995.
\newblock \doi{10.1006/jctb.1995.1011}.

\bibitem{GHH21}
A.~Graf, D.~G. Harris, and P.~Haxell.
\newblock Algorithms for {{Weighted Independent Transversals}} and {{Strong
  Colouring}}.
\newblock {\em ACM Transactions on Algorithms}, 18(1):1:1--1:16, Dec. 2021.
\newblock \doi{10.1145/3474057}.

\bibitem{GH20a}
A.~Graf and P.~Haxell.
\newblock Finding independent transversals efficiently.
\newblock {\em Combinatorics, Probability and Computing}, 29(5):780--806, Sept.
  2020.
\newblock \doi{10.1017/S0963548320000127}.

\bibitem{GT09}
S.~Gutner and M.~Tarsi.
\newblock Some results on (a:b)-choosability.
\newblock {\em Discrete Mathematics}, 309(8):2260--2270, Apr. 2009.
\newblock \doi{10.1016/j.disc.2008.04.061}.

\bibitem{Hall48}
M.~Hall, Jr.
\newblock Distinct representatives of subsets.
\newblock {\em Bull. Amer. Math. Soc.}, 54:922--926, 1948.
\newblock \doi{10.1090/S0002-9904-1948-09098-X}.

\bibitem{Hal35}
P.~Hall.
\newblock On representatives of subsets.
\newblock {\em J. London Math. Soc.}, 10(1):26--30, 1935.
\newblock \doi{10.1112/jlms/s1-10.1.128}.

\bibitem{Har16}
D.~G. Harris.
\newblock Lopsidependency in the {{Moser-Tardos Framework}}: {{Beyond}} the
  {{Lopsided Lov\'asz Local Lemma}}.
\newblock {\em ACM Transactions on Algorithms}, 13(1):1--26, Dec. 2016.
\newblock \doi{10.1145/3015762}.

\bibitem{KP18}
T.~{Kelly} and L.~{Postle}.
\newblock {Fractional coloring with local demands}.
\newblock {\em arXiv e-prints}, page arXiv:1811.11806, Nov. 2018,
  \burlalt{arXiv:1811.11806}{https://arxiv.org/abs/1811.11806}.
\newblock \doi{10.48550/arXiv.1811.11806}.

\bibitem{KuOs09}
D.~K\"{u}hn and D.~Osthus.
\newblock The minimum degree threshold for perfect graph packings.
\newblock {\em Combinatorica}, 29(1):65--107, 2009.
\newblock \doi{10.1007/s00493-009-2254-3}.

\bibitem{Levit18}
M.~Levit.
\newblock Extensions of {G}alvin's theorem.
\newblock Master's thesis, University of Waterloo, 2018.

\bibitem{Mac21}
K.~MacKeigan.
\newblock Independent coverings and orthogonal colourings.
\newblock {\em Discrete Math.}, 344(8):112431, Aug. 2021.
\newblock \doi{10.1016/j.disc.2021.112431}.

\bibitem{MB83}
D.~W. Matula and L.~L. Beck.
\newblock Smallest-last ordering and clustering and graph coloring algorithms.
\newblock {\em J. Assoc. Comput. Mach.}, 30(3):417--427, 1983.
\newblock \doi{10.1145/2402.322385}.

\bibitem{MolloyReedbook}
M.~Molloy and B.~Reed.
\newblock {\em Graph colouring and the probabilistic method}, volume~23 of {\em
  Algorithms and Combinatorics}.
\newblock Springer-Verlag, Berlin, 2002.
\newblock \doi{10.1007/978-3-642-04016-0}.

\bibitem{Mudrock22}
J.~A. {Mudrock}.
\newblock {A Short Proof that the List Packing Number of any Graph is Well
  Defined}.
\newblock {\em arXiv e-prints}, page arXiv:2207.11868, July 2022,
  \burlalt{arXiv:2207.11868}{https://arxiv.org/abs/2207.11868}.

\bibitem{SU02}
M.~Schaefer and C.~Umans.
\newblock Completeness in the polynomial-time hierarchy: {{A}} compendium.
\newblock {\em SIGACT news}, 33(3):32--49, 2002.

\bibitem{vLW}
J.~H. van Lint and R.~M. Wilson.
\newblock {\em A course in combinatorics}.
\newblock Cambridge University Press, Cambridge, 1992.

\bibitem{Yus21}
R.~Yuster.
\newblock On factors of independent transversals in {$k$}-partite graphs.
\newblock {\em Electron. J. Combin.}, 28(4):Paper No. 4.23, 18, 2021.
\newblock \doi{10.37236/10529}.

\end{thebibliography}

\end{document}